\author{Luke Peachey}
\title{Non-uniqueness of curve shortening flow}
\date{}
\newcommand{\abs}[1]{\lvert#1\rvert}
\newcommand{\norm}[1]{\lVert#1\rVert}
\newtheorem{thm}{Theorem}[section]
\newtheorem{lem}[thm]{Lemma} 
\newtheorem{prop}[thm]{Proposition}
\newtheorem*{claim}{Claim} 
\newtheorem{cor}[thm]{Corollary}
\newtheorem{conj}[thm]{Conjecture}
\theoremstyle{definition}
\newtheorem{defn}[thm]{Definition}
\newtheorem{eg}[thm]{Example}
\theoremstyle{remark} 
\newtheorem{rem}[thm]{Remark}
\numberwithin{equation}{section}
\DeclareMathOperator\Ima{Im}
\def\XXint#1#2#3{{\setbox0=\hbox{$#1{#2#3}{\int}$ }
\vcenter{\hbox{$#2#3$ }}\kern-.6\wd0}}
\begin{document}

\maketitle

\begin{abstract}
We formulate a uniqueness conjecture for curve shortening flow of proper curves on certain symmetric surfaces and give an example of a non-flat metric on the plane with respect to which curve shortening flow is not unique. That is, with respect to a suitably chosen metric, we construct a non-static solution to curve shortening flow starting from a properly embedded geodesic.
\end{abstract}

\tableofcontents

\section{Introduction}

Given a complete Riemannian surface $(M^2,g)$ and a smooth map $\gamma: \mathbb{R}\times(0,T) \rightarrow M$ such that $\gamma(\cdot,t)$ is a smoothly embedded curve at each time $t \in (0,T)$, we say that the family of curves $\gamma(\cdot,t)$ evolves under curve shortening flow (CSF) if
\begin{equation}\label{eqn:tang}
\left< \partial_t \gamma , \nu \right>_g = \kappa \quad \textnormal{on} \quad \mathbb{R} \times (0,T),
\end{equation}
where $\nu$ is a choice of unit normal vector to the curve, $\kappa := \left< \nabla_{\tau} \tau , \nu \right>_g$ is the geodesic curvature of the curve with respect to $\nu$, and $\tau$ is the unit tangent vector to the curve.\par
If there exists a continuous extension $\gamma:\mathbb{R} \times [0,T) \rightarrow M$ of our map, we say that $\gamma(\cdot,t)$ is a solution to CSF with initial data $\gamma(\cdot,0)$.\par
Given some smooth, properly embedded curve $\gamma(\cdot,0)$ in $M$, it is natural to ask whether solutions to (\ref{eqn:tang}) starting from this initial data  exist and if they are unique within a particular class of functions. In the case of closed curves $\eta:S^1 \times [0,T) \rightarrow M$, existence and uniqueness in the space of smooth solutions to curve shortening flow, and more generally to mean curvature flow, follows from the equation being quasi-parabolic and the maximum principle \cite{gage1986heat}. In fact, depending on how you require your initial data to be achieved, existence and uniqueness can be shown even after dropping the regularity of the initial data to a finite length Jordan curve \cite{lauer2013new}. Despite these results in the closed case, similar fundamental questions regarding existence and uniqueness in the non-closed case remain open.\par

If our ambient space is the flat plane and our initial data is a smooth properly embedded curve which cuts the plane into two regions, each having infinite area, then there exists an immortal solution to CSF $\gamma:\mathbb{R} \times [0,\infty) \rightarrow \mathbb{R}^2$ starting from this initial data \cite{chou1998shortening}. An existence result for proper curves in general ambient spaces remains open.\par

Before discussing uniqueness, we introduce the following class of solutions.

\begin{defn}[Uniformly proper solutions]\label{defn proper}
Let $(M,g)$ be a complete Riemannian surface and $T \in (0,\infty)$. We say that $\gamma : \mathbb{R} \times [0,T] \rightarrow M$ is a uniformly proper solution to CSF (in $M$) if
\begin{enumerate}[label=\roman*)]
\item $\gamma : \mathbb{R} \times [0,T] \rightarrow M$ is a continuous proper map.
\item $\gamma(\cdot, t) : \mathbb{R} \rightarrow M$ is a smooth proper embedding $\forall t \in (0,T]$.
\item $\gamma$ is smooth and solves (\ref{eqn:tang}) on $\mathbb{R} \times (0,T)$.
\end{enumerate}
\end{defn}

\begin{rem}\label{remark arm}
We restrict ourselves to solutions that are proper as a map on space-time to avoid tangential reparameterisations which get arbitrarily bad as $t$ goes to zero. The details of the following construction are given in the author's PhD thesis \cite{PhDthesis}.\par

Consider a cusp at infinity formed by the $x$-axis and another disjoint curve $L$ asymptotic to the $x$-axis, such that for $x_0$ sufficiently large, the region between $L$ and the $x$-axis intersected with the half-space $\{ x \geq x_0 \}$ has finite area. Then we can find a smooth proper solution to CSF starting from this cusp. That is, there exists a smooth solution $\gamma : \mathbb{R} \times (0,\infty) \rightarrow \mathbb{R}^2$ to CSF such that
\begin{itemize}
    \item The image of $\gamma(\cdot,t)$ converges to the $x$-axis locally uniformly over $(-\infty,0)$ as $t \searrow 0$.
    \item The image of $\gamma(\cdot,t)$ converges to $L$ locally uniformly over $(0,\infty)$ as $t \searrow 0$.
    \item $\gamma(0,t) \rightarrow \infty$ as $t \searrow 0$.
\end{itemize}
 By choosing a suitable time-dependent reparameterisation of $\gamma$, we can lose $L$ as initial data. That is, there exists a reparameterisation $\tilde{\gamma} : \mathbb{R} \times (0,\infty) \rightarrow \mathbb{R}^2$ of $\gamma$, such that the image of $\tilde{\gamma}(\cdot,t)$ converges to the $x$-axis locally uniformly over $\mathbb{R}$ as $t \searrow 0$. In particular, we have a continuous map $\tilde{\gamma} : \mathbb{R} \times [0,\infty) \rightarrow \mathbb{R}^2$ such that
\begin{itemize}
\item $\tilde{\gamma}(\cdot, t) : \mathbb{R} \rightarrow \mathbb{R}^2$ is a smooth proper embedding $\forall t \in (0,\infty)$.
\item $\tilde{\gamma}$ is smooth and solves (\ref{eqn:tang}) on $\mathbb{R} \times (0,\infty)$.
\item $\tilde{\gamma}(\cdot,0)$ is a parameterisation of the $x$-axis, but for any $t>0$, $\Ima(\tilde{\gamma}(\cdot,t)) = \Ima(\gamma(\cdot,t))$ isn't the $x$-axis.
\end{itemize}
This example shows that the family of curves being uniformly proper (in time) is a necessary condition to impose on a class of solutions in which you expect uniqueness. Moreover, requiring our solution to be uniformly proper is also sufficient for the usual avoidance principle with closed curves \ref{avoid}.
\end{rem}

Tychonoff found non-zero solutions of the heat equation on Euclidean space with zero initial data \parencite{tychonoff1935theoremes}. Despite curve shortening flow being a non-linear geometric analogue to the heat equation, the non-linearity of the equation affects the diffusion term, dampening large perturbations at spatial infinity as they propagate inwards. A simple illustration of this phenomenon is the following.

\begin{eg}\label{ex1}
Consider the $x$-axis in the plane. Since this is a geodesic, there exists the static solution which remains stationary under CSF. Consider any uniformly proper solution to CSF starting from the $x$-axis. Given a closed circle embedded away from the $x$-axis, by the avoidance principle \ref{avoid}, our solution must avoid this shrinking circle as they simultaneously evolve under CSF. Choosing our initial circle to be sufficiently large, we can make it shrink arbitrarily slowly. In particular, taking sufficiently large circles lying above and below the $x$-axis, we can trap our solution at any later time in an arbitrarily small tubular neighbourhood of the $x$-axis. Thus, our solution must coincide with the static solution. 
\end{eg}

\begin{defn}\label{defn unique}
Let $(M^2,g)$ be a complete Riemannian surface. We say that CSF is unique on $(M,g)$ if, for any pair of uniformly proper solutions $\gamma_i : \mathbb{R} \times [0,T_i] \rightarrow M$ to CSF (see definition \ref{defn proper}) with the same initial data
\begin{equation*}
\gamma_1(x,0) = \gamma_2(x,0), \quad \forall x \in \mathbb{R},
\end{equation*}
then their images agree wherever they are both defined
\begin{equation*}
\Ima(\gamma_1(\cdot,t)) = \Ima(\gamma_2(\cdot,t)), \quad \forall t \in [0,T],
\end{equation*}
with $T := \min \{ T_1,T_2 \}$.
\end{defn}
The following is a well known conjecture.
\begin{conj}
CSF is unique on the flat plane (see definition \ref{defn unique}).
\end{conj}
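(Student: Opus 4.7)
The plan is to adapt the barrier strategy of Example \ref{ex1} to an arbitrary properly embedded initial curve, combined with a graphical reduction and a Phragm\'en--Lindel\"of-type maximum principle for the linear parabolic PDE satisfied by the difference of two solutions.

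First I would reduce to a local-in-time, graphical setting. Let $\gamma_1,\gamma_2 : \mathbb{R} \times [0,T] \to \mathbb{R}^2$ be two uniformly proper solutions with common initial datum $\gamma_0$. Since $\gamma_0$ is a smooth proper embedding and each $\gamma_i$ is continuous on $\mathbb{R} \times [0,T]$, after possibly shrinking $T$ each $\gamma_i(\cdot,t)$ lies in a fixed tubular neighbourhood of $\gamma_0$ and can be written as a normal graph $u_i(\cdot,t)$ over $\gamma_0$. The $u_i$ satisfy the quasilinear parabolic graphical CSF equation, and their difference $w := u_1 - u_2$ satisfies a linear parabolic equation $\partial_t w = a\, w_{xx} + b\, w_x + c\, w$ whose coefficients are controlled by the $C^1$ norms of $u_1,u_2$, with $w(\cdot,0) \equiv 0$.

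Second, apply a maximum principle to $w$ on the unbounded strip $\mathbb{R} \times [0,T]$. Tychonoff-type non-uniqueness for linear parabolic equations means that the maximum principle fails without growth assumptions on $w$ at spatial infinity, so the crucial extra input is a geometric barrier argument generalising Example \ref{ex1}: using a sufficiently rich family of shrinking convex closed curves positioned on either side of $\gamma_0$ along its length, the avoidance principle should confine both $\gamma_1(\cdot,t)$ and $\gamma_2(\cdot,t)$ to arbitrarily thin tubular neighbourhoods of $\gamma_0$ outside any prescribed compact set, uniformly for $t \in [0,T']$ with $T' > 0$ small. Such quantitative smallness of $u_1,u_2$ at spatial infinity should rule out Tychonoff behaviour of $w$ and force $w \equiv 0$ on $[0,T']$. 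A standard bootstrap then propagates uniqueness from $[0,T']$ to the full interval $[0,T]$ by restarting the argument at $t = T'$.

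The substantive obstacle is the second step. For the $x$-axis the confining circles can be translated along the curve and symmetry makes the barriers essentially free, which is why Example \ref{ex1} is clean. For a general properly embedded $\gamma_0$ whose geodesic curvature may be large, decay slowly, or oscillate, one needs a family of shrinking barriers (plausibly built from scaled grim reapers, or from convex ovals adapted to the local geometry of $\gamma_0$) that collectively pin down an arbitrarily thin neighbourhood of $\gamma_0$ at infinity, and one needs to quantify the resulting decay of $u_1,u_2$ well enough to feed into a Phragm\'en--Lindel\"of argument for $w$. Constructing such a barrier family uniformly at infinity, and matching it to the Tychonoff growth threshold of the linearised equation, is in my view the essential content of the conjecture and the reason it remains open.
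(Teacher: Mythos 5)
This statement is labelled a \emph{conjecture} in the paper precisely because no proof is known; there is no paper argument to compare against. Your proposal is, by your own admission, not a proof: you concede at the end that constructing a barrier family pinning down the solution at infinity, and matching it to the Tychonoff growth threshold of the linearised equation, ``is the essential content of the conjecture and the reason it remains open.'' That self-assessment is accurate, so the right verdict is simply that you have sketched a plausible programme rather than closed the gap.

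Beyond the obstacle you name, there is a further issue in your first step that is worth flagging. A smooth proper embedding $\gamma_0:\mathbb{R}\to\mathbb{R}^2$ need not admit a tubular neighbourhood of uniform thickness: the normal injectivity radius can decay to zero at spatial infinity, for instance when two distant strands of the curve approach one another (the cusp configuration in Remark \ref{remark arm} is exactly such an example). In that situation the claim that ``after possibly shrinking $T$ each $\gamma_i(\cdot,t)$ lies in a fixed tubular neighbourhood of $\gamma_0$ and can be written as a normal graph'' is not automatic, and without it the reduction to a single linear parabolic equation for $w = u_1 - u_2$ on $\mathbb{R}\times[0,T]$ does not get off the ground. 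This is one reason the Daskalopoulos--Saez result cited in the paper is restricted to entire graphs over a fixed line, where both the tubular-neighbourhood issue and the barrier-construction issue trivialise; the conjecture is precisely the extent to which that argument survives for an arbitrary properly embedded initial curve, and your proposal correctly identifies the strategy without supplying the missing uniform barrier estimates.
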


Daskalopoulos and Saez proved a uniqueness result for entire graphical solutions to $(\ref{eqn:tang})$ in the plane \cite{daskalopoulos2021uniqueness}. We note that every graphical solution is a uniformly proper solution, and so the conjecture could be seen as one way to generalise this result. Despite the conjecture potentially ruling out non-uniqueness in the flat plane, we will show that for other ambient surfaces CSF can be non-unique.\par

Ilmanen remarks that for surfaces with no lower curvature bound, it is possible for curves to bloom at infinity and rush inwards under CSF \cite[Remark 3.6]{ilmanen1994elliptic}. Drawing parallels again with the heat equation, this property is analogous to stochastic incompleteness, whereby heat is allowed to instantly escape at infinity.
Returning our attention to the example \ref{ex1}, we see that for surfaces that allow curves to bloom at infinity, our geometric proof of uniqueness
now fails.

\begin{eg}
Let $g = dx^2 + e^{2\phi(x)} dy^2$ be a complete metric on the plane $\mathbb{R}^2$. Consider any uniformly proper solution starting from the $x$-axis. Since the $x$-axis is still a geodesic with respect to this metric, we would like to show our solution is the static solution by the same reasoning as in example \ref{ex1}. However, we suppose that $g$ is chosen in such a way that vertical lines can bloom at infinity. That is, for some $T>0$ there exists a smooth function $x : (0,T) \rightarrow \mathbb{R}$ with $\lim_{t \searrow 0} x(t) = \infty$ and $\gamma : \mathbb{R} \times (0,T) \rightarrow \mathbb{R}^2$, with $\gamma(s,t) := (x(t) , s)$ in Cartesian coordinates, a proper solution to CSF on $(\mathbb{R}^2,g)$. Then, by the avoidance principle \ref{avoid}, every closed curve moving under CSF is instantly pulled in from infinity. More precisely, for any closed curve $\eta$ solving CSF on $(\mathbb{R}^2,g)$, we have that
\begin{equation*}
    \Ima(\eta(\cdot,t)) \subseteq \{ x \leq x(t) \}, \quad \forall t >0,
\end{equation*}
and we no longer have control of the non-closed solution at infinity.
\end{eg}
The main aim of this paper is to prove the following theorem, which asserts that for a choice of metric which allows curves to bloom at infinity, we do in fact have non-static solutions to CSF starting from a proper geodesic.
\begin{thm}\label{thm1}
There exists a smooth, complete metric $g = dx^2 + e^{2\phi(x)} dy^2$ on the plane and a uniformly proper solution $\gamma : \mathbb{R} \times [0,1] \rightarrow \mathbb{R}^2$ of CSF in $(\mathbb{R}^2,g)$ with initial condition $\gamma(\cdot,0)$ a parameterisation of the $x$-axis, such that for all $t>0$, the curve $\Ima(\gamma(\cdot,t))$ is not the $x$-axis.
\end{thm}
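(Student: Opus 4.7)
The plan is to choose a warp $\phi$ growing rapidly at $+\infty$ so that vertical lines can bloom in from spatial infinity, then to realise the desired non-static solution as a limit of CSF evolutions of L-shaped perturbations of the $x$-axis, and finally to apply a time-dependent reparameterisation in the spirit of Remark \ref{remark arm} to recover the $x$-axis as initial data.

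First I would take $\phi : \mathbb{R} \to [0, \infty)$ smooth and convex with $\phi \equiv 0$ on $(-\infty, 0]$ and $\phi(x) = x^4$ on $[1, \infty)$, so that $g := dx^2 + e^{2\phi(x)}\, dy^2$ is smooth and complete. Horizontal lines are geodesics and therefore static solutions of CSF. A short computation with Christoffel symbols gives geodesic curvature $\kappa = -\phi'(x_0)$ for the vertical line $\{x = x_0\}$ (with normal $\nu = \partial_x$), so the vertical-line ansatz $\gamma_v(y, t) = (\xi(t), y)$ solves CSF iff $\xi'(t) = -\phi'(\xi(t))$. Since $\int^{\infty} du / \phi'(u) < \infty$, this ODE admits a ``blooming'' solution $\xi : (0, 1] \to (0, \infty)$ with $\xi(t) \to +\infty$ as $t \searrow 0$, which will serve as the crucial right-hand spatial barrier.

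Next, for each large $R$ I would take $\Gamma_R$ to be a smoothed L-shape consisting of the $x$-axis on $\{x \leq R - 1\}$, a smooth rounding near $(R, 0)$, and the vertical half-line $\{R\} \times [0, \infty)$. I would produce a uniformly proper solution $\gamma_R : \mathbb{R} \times [0, 1] \to \mathbb{R}^2$ to CSF starting from $\Gamma_R$ by compact exhaustion: close off $\Gamma_R$ to large smooth ovals, apply classical closed-curve existence, and pass to the limit using \ref{avoid}. The $x$-axis below, a fixed horizontal line far above, and closed-curve approximations of the blooming line $\{x = \xi(t)\}$ on the right furnish $R$-independent barriers that trap $\gamma_R(\cdot, t)$ in $\{0 \leq y\} \cap \{x \leq \xi(t)\}$. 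Because $\Gamma_R$ coincides exactly with the $x$-axis on $\{x \leq R - 1\}$, standard interior regularity (pseudolocality and curvature estimates on compact subsets) then yields a subsequential $C^\infty_{\mathrm{loc}}$ limit $\gamma_\infty$ on $\mathbb{R} \times (0, 1]$ whose image at each $t > 0$ remains pinned between the $x$-axis and $\{x = \xi(t)\}$; in particular $\gamma_\infty(\cdot, t)$ is not the $x$-axis for $t > 0$.

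Finally, I would follow Remark \ref{remark arm} and perform a time-dependent reparameterisation of $\gamma_\infty$ that smoothly slides the parameters covering the vertical arm out to $|s| \to \infty$ as $t \searrow 0$, producing the desired uniformly proper $\tilde{\gamma} : \mathbb{R} \times [0, 1] \to \mathbb{R}^2$ in the sense of Definition \ref{defn proper}. The main obstacle I expect is this last step: at each $t > 0$ the arm has infinite length extending to $y = +\infty$, so an unbounded sub-interval of the parameter must be slid out to spatial parameter infinity as $t \searrow 0$, while keeping the joint spacetime map $(s, t) \mapsto \tilde{\gamma}(s, t)$ continuous and proper all the way down to $t = 0$. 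A secondary technical point is applying the avoidance principle against the non-compact blooming line, which I would handle by replacing it with a family of large closed curves lying just outside it.
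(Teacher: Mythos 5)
Your plan shares the paper's overall architecture — choose a warp with $\int^\infty du/\phi'(u)<\infty$, approximate by compact (or finite) pieces, trap with the blooming vertical barrier $\{x=\xi(t)\}$ via avoidance, pass to a limit — but uses genuinely different approximating objects (L-shaped curves with an unbounded vertical arm) where the paper uses a graphical Dirichlet problem on $[-n,n]$ with \emph{bounded} boundary data $\pm 1$. That difference is where the gaps are. Three are serious.

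First, the inference ``pinned between the $x$-axis and $\{x=\xi(t)\}$; in particular $\gamma_\infty(\cdot,t)$ is not the $x$-axis'' is a non-sequitur as written: the static $x$-axis itself lies in every such region, so trapping alone cannot rule it out. To salvage it you would need to show the vertical arm survives the limit $R\to\infty$, i.e.\ that for each $t>0$ the limit curve really passes near $(\xi(t),y_0)$ for some $y_0>0$; this requires controlling the limit in a region where, as $R\to\infty$, you are taking a limit of curves with very high curvature near the smoothed corner. The paper instead proves non-triviality \emph{directly} with an explicit spacetime subsolution $\overline{u}$ (section \ref{chapter4}), a step you cannot skip.

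Second, ``standard interior regularity (pseudolocality and curvature estimates)'' is doing all the work of section \ref{chapter3} in one parenthetical, and it is not standard here: the ambient metric has unbounded curvature, the approximating curves are non-graphical with arms extending to $y=+\infty$, and you have no a priori gradient or curvature control before passing to the limit. The paper's foliation argument (Theorem \ref{loc grad bd}, using the graphical geodesics, the translating solutions $F$, and the $g_c$ family) is precisely what supplies the local $C^1$ bounds that make Schauder/De Giorgi--Nash--Moser applicable. Without an analogue for your L-shapes you cannot extract a smooth, connected, properly embedded limit.

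Third, and related, you have no upper barrier near the origin: the arm of $\Gamma_R$ goes to $y=+\infty$, so the horizontal geodesics $\{y=\mathrm{const}\}$ that the paper uses as upper/lower barriers intersect $\Gamma_R$ and cannot be applied. Consequently you have not shown that $\gamma_\infty(\cdot,t)\to$ the $x$-axis locally uniformly as $t\searrow 0$, which is needed for both continuity at $t=0$ and properness of the spacetime map. In the paper this is automatic because the Dirichlet data $\pm 1$ traps $y_n$ in $[-1,1]$, and continuity at $t=0$ then follows from monotonicity (Lemma 3.1). By contrast, the reparameterisation step you flag as the ``main obstacle'' is actually the least problematic part: since your arm escapes to $x=+\infty$ (not to $x=+\infty$ at bounded $y$, as in Remark \ref{remark arm}), an arclength parameterisation anchored at $x=0$ is already spacetime-proper, \emph{provided} the three gaps above are filled. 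The paper's bounded graphical formulation sidesteps the properness question entirely — $\gamma(x,t)=(x,y(x,t))$ with $|y|\le 1$ is trivially uniformly proper — which is a substantial simplification you lose by working with the L-shape.
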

In particular, this gives the following corollary.
\begin{cor} There exists a Riemannian surface $(M^2,g)$ on which CSF is not unique (see definition \ref{defn unique}).
\end{cor}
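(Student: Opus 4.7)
My plan is to construct the non-static solution as a smooth limit of CSF flows whose initial data are smooth proper embedded approximations of the $x$-axis, each carrying a long vertical tail heading out to spatial infinity. As the tail recedes, the initial data tend to the $x$-axis, while the blooming-from-infinity property of a suitably warped metric forces a nontrivial perturbation to persist at every positive time.

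First I would fix $\phi : \mathbb{R} \to \mathbb{R}$ smooth and even. A direct Christoffel computation for $g = dx^2 + e^{2\phi(x)} dy^2$ shows that a vertical line $\{x = a\}$, with outward unit normal $\partial_x$, has geodesic curvature $-\phi'(a)$, so CSF on a $y$-invariant family of vertical lines reduces to the ODE $x'(t) = -\phi'(x(t))$. Choosing the growth of $\phi'$ large enough that $\int^{\infty} du / \phi'(u) < \infty$ yields a smooth ``right-blooming'' solution $X : (0,\infty) \to \mathbb{R}$ with $X(t) \to +\infty$ as $t \searrow 0$ (and its mirror on the left). For each large integer $N$ I would let $\gamma_N(\cdot, 0)$ be a smooth proper embedding $\mathbb{R} \to \mathbb{R}^2$ that coincides with the $x$-axis on $|x| \leq N - 1$, bends smoothly upward through a region of curvature uniformly bounded in $N$, and coincides with the vertical ray $\{x = N,\, y \geq 1\}$ at its other end. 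Short-time existence (adapting \cite{chou1998shortening} with a localisation in $y$, say) gives a uniformly proper solution $\gamma_N : \mathbb{R} \times [0, T_N] \to \mathbb{R}^2$ with $T_N \geq T_0 > 0$ uniformly in $N$, thanks to the uniform curvature bound on the initial data.

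Applying avoidance with large shrinking circles placed above and below the $x$-axis, exactly as in example \ref{ex1}, traps the portion of $\gamma_N(\cdot, t)$ whose $x$-coordinate is much less than $N$ into an arbitrarily thin tubular neighbourhood of the $x$-axis, uniformly in $N$ and continuously in $t$. Applying \ref{avoid} on the right with vertical-line barriers moving under their own CSF controls the rightmost extent of $\gamma_N(\cdot, t)$ by $x_N(t)$, the solution of $x'(t) = -\phi'(x(t))$ with $x_N(0) = N$, and forces $\gamma_N(\cdot, t)$ to contain points of arbitrarily large $y$-coordinate concentrated near $x = x_N(t)$ (since the initial tail is unbounded above). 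Combining these barrier estimates with interior parabolic regularity, I would extract a subsequential smooth limit $\gamma : \mathbb{R} \times (0, T_0] \to \mathbb{R}^2$, extend it continuously to $t = 0$ as a parameterisation of the $x$-axis, and assemble the spatial estimates to verify the uniformly proper condition of definition \ref{defn proper}. For non-staticity, note that $x_N(t) \to X(t)$ as $N \to \infty$ for each fixed $t > 0$, because $\int_{x_N(t)}^N du/\phi'(u) = t$ while $\int_N^\infty du/\phi'(u) \to 0$. Hence $\Ima(\gamma(\cdot, t))$ contains points with $x$-coordinate close to $X(t) < \infty$ and $y$-coordinate arbitrarily large, so cannot equal the $x$-axis; a continuation argument then delivers the full interval $[0,1]$ in the statement.

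The principal obstacle will be the limit step: simultaneously guaranteeing (a) a genuinely smooth uniformly proper map in the limit, (b) continuous attainment of the whole $x$-axis at $t = 0$ with a parameterisation realising the uniform properness, and (c) survival of the vertical arm in the limit rather than it being pushed off to spatial infinity as $N \to \infty$. All three depend on a careful co-ordination between the growth rate of $\phi'$ near infinity and the interpolating geometry used to build the cut-off curves $\gamma_N(\cdot, 0)$; in particular the bend in $\gamma_N(\cdot,0)$ must be chosen so that, under the flow, it remains at bounded distance from the blooming line $X(t)$ rather than drifting out with $N$.
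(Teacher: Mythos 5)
Your proposal takes a genuinely different route from the paper, and while the geometric intuition (an arm blooming in from spatial infinity) matches the mechanism that makes the paper's metric special, the technical execution has real gaps that the paper's construction is precisely designed to avoid.

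The paper proves the corollary by establishing Theorem~\ref{thm1} via the \emph{graphical} Theorem~\ref{main thm}: it sets up Dirichlet problems for the vertical-graph equation $\mathcal{V}(y)=0$ on compact intervals $[-n,n]$ with fixed boundary values $\pm 1$, so that the approximating solutions $y_n$ always live in the strip $|y|\leq 1$. Long-time existence, local gradient bounds (via foliation and the intersection principle), and passage to the limit are then essentially one-dimensional parabolic theory, and uniform properness of the resulting map $(x,t)\mapsto (x,y(x,t))$ is automatic because $y$ is bounded. The ``peeling away at infinity'' is detected by a subsolution $\overline{u}$ that forces $y(x,t)\to 1$ as $x\to\infty$ for any $t>0$; the limit curve has no vertical arm.

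Your construction instead flows non-graphical initial data carrying an unbounded vertical tail $\{x=N,\,y\geq 1\}$, so your intended limit curve has an unbounded arm at $x\approx X(t)$, i.e.\ it is a genuinely different solution than the paper produces. The main concerns are as follows. First, the existence step: \cite{chou1998shortening} is for the flat plane, and the essential feature of the metric here is that its Gaussian curvature $K=-\phi''-(\phi')^2$ tends to $-\infty$, which is exactly the regime Chou--Zhu does not cover; ``adapting with a localisation in $y$'' conceals where the work lies. Second, the claim $T_N\geq T_0>0$ uniformly in $N$ ``thanks to the uniform curvature bound on the initial data'' is dubious because although the geodesic curvature of $\gamma_N(\cdot,0)$ is bounded, the ambient curvature near the bend at $x\approx N$ blows up like $-N^4$, which can cause rapid curvature growth and undermines any naive uniform existence time. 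Third, the avoidance argument against vertical-line barriers is between two \emph{non-closed} proper curves, so Lemma~\ref{avoid} (closed vs.\ uniformly proper) does not apply directly and a separate avoidance lemma would be needed. Fourth, and most delicately, the limit step requires showing not just smooth convergence away from $t=0$ but that the resulting space-time map is \emph{proper}. Remark~\ref{remark arm} in the paper is precisely a warning about this type of construction: a geometric object receding to spatial infinity as $t\searrow 0$ can force a reparameterisation that loses properness. Your own last paragraph flags all three of these issues; they are not obstacles the proposal resolves, but rather the substance of the theorem. By contrast, the paper's graphical set-up makes every one of these points either trivial or a standard parabolic estimate, which is what buys the construction its rigour.

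So: the idea is sound and would produce a distinct non-static uniformly proper solution from the one in the paper if it worked, but as written the proposal assumes exactly the hard analytic facts (existence in a surface of unbounded curvature, uniform existence time, properness of the limit map) that the paper's choice to work with bounded vertical graphs was designed to sidestep.
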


Within the class of rotationally symmetric metrics on the plane, we are able to formulate a precise definition for blooming at infinity.\par

Given the usual action of the orthogonal group $O(2)$ on $\mathbb{R}^2$, consider a complete smooth $O(2)$-invariant metric $g$ on the plane. In polar coordinates $(r,\theta)$, the metric has the form 
\begin{equation}\label{eqn polar metric}
    g = dr^2 + e^{2\phi(r)} d\theta^2
\end{equation}
for some smooth warping function $\phi : (0,\infty) \rightarrow \mathbb{R}$. Under equation (\ref{eqn:tang}), the radii of the geodesic circles $\partial B_R:= \{ (R,\theta) : \theta \in S^1 \}$ solve the ODE
\begin{equation}\label{eqn symm circle}
    \frac{\partial R}{\partial t}(t) = -\frac{\partial\phi}{\partial r}(R(t)).
\end{equation}
We characterise such a metric to allow blooming at infinity if solutions to this ODE can come in from infinity in finite time:
\begin{defn}[Blooming at infinity]\label{defn bloom}
Consider the plane $(\mathbb{R}^2,g)$ equipped with a complete smooth $O(2)$-invariant metric, so that in polar coordinates it has the form $g = dr^2 + e^{2\phi(r)} d\theta^2$ as in (\ref{eqn polar metric}). We say that $g$ allows blooming at infinity if there exists $T \in (0,\infty)$ and a solution $R: (0,T) \rightarrow (0,\infty)$ to the ODE (\ref{eqn symm circle}) such that $R(t) \rightarrow \infty$ as $t \searrow 0$. If no such solution exists, we say that $g$ does not allow blooming at infinity.
\end{defn}

Within the class of smooth complete $O(2)$-invariant metrics which have non-positive curvature, we prove that if a metric does not allow blooming at infinity, then with respect to this metric we have uniqueness for uniformly proper solutions to CSF which start from a radial geodesic.

\begin{restatable}[Uniqueness of radial geodesics]{thm}{uniq}\label{thm2}
Consider a complete smooth $O(2)$-invariant metric with non-positive curvature on the plane. Let $\gamma : \mathbb{R} \times [0,T] \rightarrow \mathbb{R}^2$ be a uniformly proper solution to CSF starting from the $x$-axis. If $g$ does not allow blooming at infinity then $\gamma$ is the static solution to CSF.
\end{restatable}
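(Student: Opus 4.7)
The proof is by contradiction: suppose $\gamma$ is not the static solution, so for some $t_0 \in (0,T]$ the image $\Ima(\gamma(\cdot,t_0))$ contains a point $q$ off the $x$-axis, and by the reflection isometry $(x,y) \mapsto (x,-y)$ of the metric I may take $q$ in the open upper half-plane at distance at least $2\epsilon > 0$ above the $x$-axis. My strategy mirrors Example \ref{ex1}: I will produce a closed curve $\eta_0 \subset \{y > \epsilon\}$ whose CSF evolution $\eta(\cdot,t)$ still encloses $q$ at time $t_0$; the avoidance principle \ref{avoid}, applied to $\eta$ and the initial $x$-axis (disjoint from $\eta_0$), then forces $\gamma(\cdot,t_0)$ to lie outside the interior of $\eta(\cdot,t_0)$, contradicting $q \in \Ima(\gamma(\cdot,t_0))$.

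The key analytic input is the no-blooming hypothesis, which I first convert into a quantitative estimate on concentric circles. Completeness and non-positive curvature force $\phi'(r) > 0$ for all sufficiently large $r$, so no-blooming is equivalent to $\int^\infty dr/\phi'(r) = +\infty$; hence the concentric circle $\partial B_R$, evolving under CSF by the ODE (\ref{eqn symm circle}), has lifetime $\int_0^R dr/\phi'(r)$ tending to $+\infty$ as $R \to \infty$, and for every $T_0, R_0 > 0$ I can fix $R_1$ so large that $\partial B_{R_1}$ evolved under CSF still has radius at least $R_0$ throughout $[0,T_0]$. With this in hand, I will construct $\eta_0$ from such a concentric circle $\partial B_{R_1}$ by excising the two short arcs lying in the strip $\{|y| \leq \epsilon\}$ and reconnecting them by a small smooth cap contained in $\{y > \epsilon\}$; to show that $\eta(\cdot,t_0)$ still encloses $q$, I will compare the evolution of $\eta$ against the concentric evolutions of $\partial B_{R_0}$ and $\partial B_{R_1}$ using Angenent's Sturmian intersection-count theorem for CSF, so as to trap $\eta(\cdot,t_0)$ inside an annular region enclosing $q$.

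The main obstacle I anticipate is the control of the non-concentric evolution of $\eta$: in the flat setting of Example \ref{ex1}, Euclidean circles shrink concentrically so the analogous argument is immediate, but in a negatively curved rotationally symmetric metric the CSF of an off-centre closed curve has no explicit description, and negative curvature a priori permits anomalously fast area loss via Gauss--Bonnet. Overcoming this step requires coordinating the exact ODE (\ref{eqn symm circle}) for concentric circles (which supplies the long lifetime under no-blooming) with the Sturmian intersection-count theorem, in order to transfer the no-blooming control from the concentric family onto the deformed barrier.
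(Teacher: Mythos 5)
Your overall shape — find a witness $q$ off the axis, build a closed barrier above the axis that still encloses $q$ at time $t_0$, and contradict the avoidance principle — is a natural inversion of the paper's argument, but the middle step has a genuine gap that the paper's proof is specifically designed to avoid.

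The gap is the containment claim: you need the evolving closed curve $\eta(\cdot,t_0)$ to still \emph{enclose} $q$, not merely to still exist. The no-blooming hypothesis, as you correctly note, gives long lifetimes for concentric circles via the ODE (\ref{eqn symm circle}). But lifetime is not enclosure: in a non-positively curved surface Gauss--Bonnet gives $\partial_t A \leq -2\pi$, so a non-concentric closed curve can have large area and survive for a long time while its interior drifts entirely away from $q$ (for instance, collapsing towards the origin or towards one end of the sector) long before $t_0$. The tool you invoke to rule this out --- Angenent's Sturmian intersection bound against $\partial B_{R_0}$ and $\partial B_{R_1}$ --- controls the \emph{number} of intersections with the concentric circles, not whether $q$ lies inside $\eta(\cdot,t_0)$; a curve can maintain a fixed even intersection count with each $\partial B_R$ and still cease to enclose $q$. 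Pinning down where the interior actually sits requires some replacement for concentric monotonicity, and nothing in the proposal supplies it. There is also a smaller geometric defect in the construction: excising the two arcs of $\partial B_{R_1}$ in $\{|y|\leq\epsilon\}$ leaves an upper and a lower arc, and these cannot be ``reconnected by a small smooth cap contained in $\{y>\epsilon\}$'' --- you must discard the lower arc and replace it with a long crossing of the upper half-plane, which is a very different curve whose initial curvature is no longer controlled by $R_1$.

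The paper sidesteps the containment problem entirely: it never tries to enclose a fixed point. Via Lemma \ref{bloom defn} (whose proof uses the Gauss--Bonnet comparison of disjoint closed flows, Lemma \ref{GB lem}), no blooming is shown equivalent to closed curves in the angular sectors $S_m$ surviving for all time. The proof then builds closed barriers $\widehat{\eta}_r$ that hug \emph{both} rays $\theta=0$ and $\theta=\pi$ in a pinched convex region $\Omega_m$ and extend past $B_r$ at time $T$; the avoidance principle \ref{avoid} applied to the whole family (and its $\pi$-rotation) traps $\Ima\gamma(\cdot,t)$ in $\Omega_m\cup\pi\cdot\Omega_m$, and sending $m\to\infty$ collapses this to the $x$-axis. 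You do not need to know where any one barrier's interior is at time $t_0$ --- only that, for each $m$ and $r$, \emph{some} barrier exists that separates $\gamma$ from the region outside $\Omega_m$. If you want to repair your version, the missing ingredient is precisely the equivalence in Lemma \ref{bloom defn}, together with a replacement for ``$\eta(\cdot,t_0)$ encloses $q$'' by ``the family $\widehat{\eta}_r$ eventually separates $q$ from the $x$-axis'', which is a weaker and checkable statement.
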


In light of the previous theorems, we tentatively make the following uniqueness conjecture, claiming that within our special class of metrics, the only obstruction to uniqueness under CSF starting from any initial data is precisely blooming at infinity.

\begin{conj}
Let $(\mathbb{R}^2,g)$ be the plane equipped with a complete smooth $O(2)$-invariant metric with non-positive curvature. Then CSF is unique on $(\mathbb{R}^2,g)$ (see definition \ref{defn unique}) iff $g$ does not allow blooming at infinity (see definition \ref{defn bloom}).
\end{conj}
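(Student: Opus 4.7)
The conjecture splits into two implications, each of which builds on the main theorems stated above. For the forward direction (uniqueness $\Rightarrow$ no blooming), the plan is to prove the contrapositive by adapting the construction of Theorem~\ref{thm1} to the $O(2)$-invariant setting. In polar coordinates an $O(2)$-invariant metric has the warped-product form $g = dr^2 + e^{2\phi(r)} d\theta^2$, which is structurally identical to the metric of Theorem~\ref{thm1} with $r$ playing the role of $x$ and the periodic coordinate $\theta$ playing the role of $y$. Assuming blooming, the shrinking circle solution $R(t)$ from Definition~\ref{defn bloom} plays exactly the role of a blooming vertical line, and a cusp construction as in Remark~\ref{remark arm} should yield a uniformly proper non-static solution starting from a radial geodesic (a full diameter through the origin). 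Care is needed only at the origin where polar coordinates degenerate; away from the origin the construction is local at infinity and transfers directly.

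For the backward direction (no blooming $\Rightarrow$ uniqueness), the plan is to extend Theorem~\ref{thm2} from radial-geodesic initial data to arbitrary uniformly proper initial data. Given two uniformly proper solutions $\gamma_1, \gamma_2$ sharing the same initial curve, the first step is to use the non-blooming hypothesis to produce, for each $T' < T$ and each large $R_0$, a shrinking geodesic-circle barrier whose radius stays above $R_0$ on $[0,T']$. The avoidance principle~\ref{avoid} then traps the ``ends'' of both $\gamma_i$ in arbitrarily large compact regions. Once confined, the flow is uniformly parabolic in local coordinates and, after a brief delay during which instantaneous smoothing produces uniform regularity, both $\gamma_i(\cdot,t)$ can be written as graphs over the initial curve; standard uniqueness for quasilinear parabolic equations then forces $\Ima(\gamma_1(\cdot,t)) = \Ima(\gamma_2(\cdot,t))$ on the compact region. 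Exhausting $\mathbb{R}^2$ by such regions would yield global uniqueness, with the non-positive curvature hypothesis playing the same supporting role as in Theorem~\ref{thm2}, controlling the behaviour of the barrier foliation.

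The main obstacle will be the backward direction, and specifically the step of upgrading barrier confinement to actual agreement of the two solutions. Confinement places both images in a common compact set but does not by itself force them to coincide, and the continuous initial data permitted by Definition~\ref{defn proper} is far weaker than standard parabolic uniqueness theory requires. Bridging this gap demands a delicate interplay between the barrier argument and the instantaneous smoothing of CSF near $t=0$, so that both solutions become uniformly graphical over the initial curve on each compact region in a quantitative way. A subsidiary difficulty is clarifying the precise role of the non-positive curvature hypothesis: if its only purpose is to tame the qualitative behaviour of the barrier circles, then a strengthened version of the conjecture without this assumption may well hold, and isolating where non-positivity is genuinely used should be part of any serious attempt at the proof.
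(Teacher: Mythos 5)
This statement is a \emph{conjecture} in the paper; the authors state it ``tentatively,'' and there is no proof in the paper for you to be measured against. Your write-up is, correctly, a research plan rather than a proof, and it is a reasonable one: it isolates the two implications and pairs each with the theorem in the paper that is most relevant (Theorem~\ref{thm1} as a template for the forward direction, Theorem~\ref{thm2} as a template for the backward direction). That said, both directions contain genuine gaps that the paper itself does not close, and a couple of the claims you make about how easily the existing results transfer are too optimistic.

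For the forward direction, the metric of Theorem~\ref{thm1} is $dx^2 + e^{2\phi(x)}dy^2$, a warped product over the \emph{line}, which is invariant under $y$-translations but not under $O(2)$. The analogy you draw with the polar form $dr^2 + e^{2\phi(r)}d\theta^2$ is suggestive, but it is not a matter of relabelling: the fibre coordinate $\theta$ is periodic, the base coordinate $r$ lives in $[0,\infty)$ rather than $\mathbb{R}$, and a radial geodesic is a diameter passing through the coordinate singularity at the origin. The paper's entire machinery — the vertical graph Dirichlet problems on $[-n,n]$ with odd initial data, the foliations by $\sigma_{m,h}$ and by the horizontal graphs $g_c$, the barrier $\overline{u}$ built from the blooming line $\zeta(t)=t^{-1}$ — is set up on $\mathbb{R}\times\mathbb{R}$ and would all need to be rebuilt for a sector or half-plane with a boundary/degeneracy at the origin. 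Dismissing this as something that ``transfers directly'' away from the origin understates the work; the construction of a uniformly proper, non-static solution in the rotationally symmetric setting is precisely what the paper leaves open.

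For the backward direction, Theorem~\ref{thm2} handles only radial geodesic initial data, and the mechanism is quite specific: it uses the rotational symmetry to rotate a closed solution in a sector $S_m$, uses Lemma~\ref{bloom defn} (which itself relies on Gauss--Bonnet and hence on $K\le 0$) to push such solutions arbitrarily far out in finite time, and then squeezes $\Ima(\gamma(\cdot,t))$ into $\bigcap_m (\Omega_m \cup \pi\cdot\Omega_m)$. None of this ``barrier confinement followed by parabolic uniqueness'' pattern appears for a general initial curve, and — as you correctly flag — confinement alone does not give agreement. You would additionally need the initial curve to be trapped between two families of barriers that pinch down onto it, which is exactly what the rotational symmetry of a diameter provides and what a generic proper curve lacks. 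Finally, your suggestion that the non-positive curvature hypothesis only ``tames the qualitative behaviour'' of the barrier circles and might be droppable is too quick: the paper uses $K\le 0$ substantively in Lemma~\ref{bloom defn} (via Gauss--Bonnet, to bound extinction times) and in Lemma~\ref{GB lem}, and it implicitly guarantees geodesic convexity of the sectors $\Omega_m$ used in the squeeze. Any attempt to weaken that hypothesis should start by finding replacements for these uses, not by assuming they are cosmetic.
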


\subsection{Outline of the paper}
Sections \ref{chapter2}, \ref{chapter3} \& \ref{chapter4} are dedicated to proving theorem \ref{thm1}, with section \ref{chapter5} containing the proof of theorem \ref{thm2}.\par

We first aim to construct a non-compact solution to CSF via a limit of compact solutions. In section \ref{chapter2} we consider a Dirichlet problem over a compact interval using the graphical formulation of CSF on the interior. We choose suitable auxiliary conditions so that our solution is initially constant for most of the interval, but the value it takes changes near the boundary. The solution to such a Dirichlet problem will be smooth and immortal. Taking larger and larger compact intervals, the corresponding solutions to these Dirichlet problems will give a sequence of compact solutions which initially agree with the $x$-axis on larger and larger regions of the real line, but which deviate from the $x$-axis by a prescribed amount further and further out. In order to pass to a limit we require local uniform regularity for our sequence of solutions.\par

In section \ref{chapter3} we use a foliation argument to prove local uniform $C^1$-bounds, which then extends to local uniform $C^k$-bounds via parabolic regularity. This then allows us to pass to a limit, which will be a smooth immortal solution to CSF starting from the $x$-axis.\par

So far, the properties of our metric have not been crucial in the construction. In fact, for any choice of warping function, the ideas given so far could be used to produce a solution to CSF starting from $x$-axis. In section \ref{chapter4} we utilise our specific choice of metric to construct a barrier which moves in from infinity in finite time, and which pushes our solution away from the $x$-axis instantaneously.\par

Finally, the proof of theorem \ref{thm2} in section \ref{chapter5} is essentially a modified version of the barrier argument seen in example \ref{ex1}. We show that at an arbitrarily large time and for arbitrarily thin convex neighbourhoods of our geodesic, we can find closed solutions to CSF which not only exist until this time, but also lies arbitrarily far out inside our convex neighbourhood at this time. By applying the avoidance principle to our uniformly proper solution and the closed curves mentioned above, we force the image of our uniformly proper solution to agree with the $x$-axis at any positive time.

\subsection{Notation}
Given $\Omega \Subset \mathbb{R}$ and $T \in (0,\infty]$, we use the notation $\Omega_{T} := \Omega \times (0,T)$ to denote the parabolic rectangle and $\Gamma_{T}:= \left(\Omega \times \{0\}\right) \cup \left(\partial \Omega \times [0,T) \right)$ to denote its parabolic boundary. For each $j \in \mathbb{N}$ and $\alpha \in (0,1]$, we use $P^{j}(\Omega_T) := C^{2j,j} (\Omega_T)$ to denote the parabolic $C^j$ space and $P^{j,\alpha}(\Omega_T) := C^{2j,\alpha,j,\frac{\alpha}{2}} (\Omega_T)$ the corresponding parabolic Hölder space.

\section{Graphical curve shortening flow}\label{chapter2}

We begin by fixing a metric $g:= dx^2 + e^{2\phi(x)} dy^2$ on the plane for some smooth $\phi :\mathbb{R} \rightarrow  \mathbb{R}$, where $(x,y)$ are the standard cartesian coordinates. We will consider graphical solutions to CSF with respect to this metric. That is, we suppose we have a curve satisfying CSF such that you can either write $x(y,t)$ as a function of $y$ and $t$, or $y(x,t)$ as a function of $x$ and $t$. Since $\phi$ is independent of $y$, a solution to CSF remains a solution after a translation along the $y$-axis. In the case $x(y,t)$, translating along the $y$-axis corresponds to a horizontal translations of the graph, and in the case $y(x,t)$, a vertical translation. As such, we refer to these cases as horizontal or vertical graphs respectively. It is a routine calculation to show that the geodesic curvature $\kappa$ of our curve is given by
\begin{align}
&\kappa = \frac{\phi' e^{\phi} (e^{2\phi} + 2x_y^2) - e^{\phi} x_{yy}}{(e^{2\phi} + x_y^2)^{\frac{3}{2}}}, \quad \text{for a horizontal graph}\  x(y,t),\label{k,s}\\
&\kappa = \frac{\phi' e^{\phi} y_x (y_x^2 e^{2\phi} + 2) + e^{\phi} y_{xx}}{(1 + e^{2\phi} y_x^2)^{\frac{3}{2}}}, \quad
\text{for a vertical graph}\ y(x,t).\label{k,r}
\end{align}
Substituting these equations into (\ref{eqn:tang}) gives the graphical formulations for CSF on $(\mathbb{R}^2,g)$
\begin{equation}\label{CSF,1}
    x_t = \frac{x_{yy}}{e^{2\phi} + x_y^2} - \phi'(x) \left( 1 + \frac{x_y^2}{e^{2\phi} + x_y^2} \right) = \frac{\partial}{\partial y}\left( e^{-\phi} \tan^{-1}(x_{y} e^{-\phi}) \right) + \phi'(x) \left( x_y e^{-\phi} \tan^{-1}(x_y  )e^{-\phi} -1 \right),
\end{equation}
\begin{equation}\label{CSF,2}
    y_t = \frac{y_{xx}}{1 + e^{2\phi} y_x^2} + y_x \phi'(x)  \left(1 + \frac{1}{1 + e^{2\phi} y_x^2} \right) = \frac{\partial}{\partial x} \left( e^{-\phi} \tan^{-1}(y_x e^\phi)  \right) + \phi'(x) \left( y_x + e^{-\phi} \tan^{-1}(y_x e^\phi) \right).
\end{equation}
Since we refer to these PDEs throughout the rest of the paper, we introduce the notation
\begin{equation*}
\mu(p):= \frac{1}{1 + p^2 e^{2\phi}} \in (0,1], \quad  \nu(p) := \frac{1}{e^{2\phi} + p^2} \in (0,e^{-2\phi}].
\end{equation*}
With this notation, we have the quasi-linear operators
\begin{align*}
\mathcal{H}(x) &:= x_t - \nu(x_y) x_{yy} + \phi'(x) (1+\nu(x_y) x_y^2),\\
\mathcal{V}(y) &:= y_t - \mu(y_x) y_{xx} - \phi'(x) (1+\mu(y_x)) y_x,
\end{align*} 
so that equations (\ref{CSF,1}) and (\ref{CSF,2}) become $\mathcal{H}=0$ and $\mathcal{V}= 0$ respectively. The following theorem is the graphical version of theorem \ref{thm1}. 
\begin{thm}\label{main thm}
There exists a smooth, even function $\phi:\mathbb{R} \rightarrow \mathbb{R}$ and a continuous function $y: \mathbb{R} \times [0,\infty) \rightarrow [-1,1]$ such that
\begin{enumerate}
\item [(i)]  $y(\cdot , 0) \equiv 0$ on $\mathbb{R}$.
\item[(ii)] $y(\cdot,t)$ is an increasing odd function $\forall t \in (0,\infty)$.
\item [(iii)]  $y$ is smooth and satisfies $\mathcal{V}(y)=0$ on $\mathbb{R} \times (0,\infty)$.
\item [(iv)] $y(\cdot,t)$ instantly peels away at infinity
\begin{equation*}
\forall \epsilon, t > 0, \ \exists x_0>0 \quad \textnormal{such that} \quad y(x,t) > 1 - \epsilon, \quad \forall x > x_0.
\end{equation*}
\end{enumerate}
\end{thm}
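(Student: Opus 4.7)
The plan is to produce the solution $y$ as the limit of solutions $y_L$ to Dirichlet problems on compact intervals $[-L,L]$, exploiting the freedom in choosing $\phi$ only at the very end, when we need to push the limit away from the $x$-axis at infinity. Throughout the plan I treat $\phi$ as a smooth, even function to be specified later; evenness of $\phi$ will be what gives the odd-in-$x$ structure.

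For the approximating problems I would fix a smooth odd function $y_0 : \mathbb{R} \to [-1,1]$ which equals $0$ on $[-1,1]$ and equals $\pm 1$ for $\pm x \geq 2$, and for each $L > 2$ consider the Dirichlet problem
\begin{equation*}
\mathcal{V}(y_L) = 0 \text{ on } (-L,L)\times(0,\infty), \quad y_L(\pm L,t) = \pm 1, \quad y_L(x,0) = y_0(x - (L-2)) \text{-type data},
\end{equation*}
more precisely, initial data that is $0$ on $[-(L-2),L-2]$ and transitions monotonically to $\pm 1$ near $\pm L$. Standard quasilinear parabolic theory (the equation $\mathcal{V}=0$ is uniformly parabolic on bounded sets in $y_x$) gives a smooth immortal solution $y_L$. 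The constant functions $\pm 1$ are sub/super-solutions, so by the maximum principle $y_L$ takes values in $[-1,1]$; by the evenness of $\phi$ and the odd symmetry of the initial and boundary data, $y_L(-x,t) = -y_L(x,t)$, so oddness is built in. Monotonicity in $x$ is similarly propagated from the initial data by differentiating the equation in $x$ and applying the maximum principle to $(y_L)_x$, which satisfies a linear parabolic equation (with Dirichlet-type corner behaviour at $\pm L$ handled by Hopf).

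The key analytic obstacle is then to extract a limit $y_L \to y$ with enough regularity to pass equation (\ref{CSF,2}) to the limit. Here I would invoke the machinery that the paper defers to Section \ref{chapter3}: a foliation by translates of the compact graphs (or of a reference family) yields local uniform $C^1$ bounds on $y_L$ on any $[-R,R] \times [\tau, T]$ independent of $L \gg R$, because the initial data is identically $0$ on a huge interval around $R$, so translating the approximate solution in $y$ produces a family through every point; parabolic Schauder/Krylov--Safonov upgrades this to local uniform $C^k$ bounds. Passing to a subsequential limit produces $y \in C^\infty(\mathbb{R}\times(0,\infty))$ with $\mathcal{V}(y)=0$, values in $[-1,1]$, and odd, monotone in $x$. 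The local $C^0$ estimate up to $t=0$ (coming from local $C^0$ closeness of the initial data to the zero function) gives (i).

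The final and genuinely nontrivial step is (iv): showing that $y(\cdot,t)$ is forced close to $\pm 1$ far out for every $t > 0$. This is where the choice of $\phi$ must be exploited: pick $\phi$ so that the metric $g = dx^2 + e^{2\phi(x)}dy^2$ allows vertical lines to bloom at $x = +\infty$, meaning there is a solution $X(t)$ of the line-ODE (the analogue of (\ref{eqn symm circle})) with $X(t) \to +\infty$ as $t \searrow 0$ and existing on some $(0,T_0)$. The vertical lines $\{x = X(t)\}$ then give a one-parameter family of barriers rushing in from infinity. By the avoidance principle (\ref{avoid}), applied to the compact solutions $y_L$ versus suitable closed curves built by capping off these vertical line barriers (e.g.\ long thin closed curves sitting just to the right of $\{x=X(t)\}$ and above the line $\{y = 1-\epsilon\}$), the graph of $y_L(\cdot,t)$ is forced to cross $\{y = 1-\epsilon\}$ somewhere before $x = X(t) + O(1)$. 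This estimate passes to the limit, giving (iv). I expect the main technical obstacle to be constructing these closed-curve barriers carefully enough that they both fit in the available region and stay on one side of the graph initially — essentially, verifying that the blooming rate of the vertical lines can be made arbitrarily fast near $t = 0$, so that the barrier overtakes any prescribed value $1-\epsilon$ before we lose control.
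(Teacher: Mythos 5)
Your plan for parts (i)--(iii) matches the paper's strategy very closely: the Dirichlet problems on expanding intervals with initial data that is $0$ on most of the interval and transitions to $\pm 1$ near the ends, oddness and monotonicity propagated by symmetry and the maximum principle, a foliation argument for local $C^1$ bounds followed by parabolic Schauder to upgrade, and continuity at $t=0$ from $C^0$ closeness to the zero solution. One small difference: the paper observes that the approximating solutions $y_n$ are monotone decreasing in $n$ (on $[0,n]$, by the avoidance principle applied to nested problems), so the limit is a genuine pointwise limit rather than a subsequential one; this monotonicity is also what gives the continuity at $t=0$ cleanly. That is minor.

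The genuine gap is in your argument for (iv). You propose to use closed curves built by capping off the blooming vertical lines $\{x = X(t)\}$, placing them ``just to the right of $\{x = X(t)\}$ and above $\{y = 1-\epsilon\}$'', and appeal to the avoidance principle. But a closed curve sitting above the graph and remaining disjoint from it does not force the graph to rise: avoidance only prevents the curve and the graph from crossing, and a graph $y_L(\cdot,t)$ that stays at $0$ near $x = X(t)$ while the closed curve sits at $y > 1-\epsilon$ never crosses the curve at all. Closed curves under CSF also shrink, so a barrier that needs to push the graph upward cannot be a closed solution lying above it. There is a further technical obstruction: you would need a closed curve existing on $[\tau, t]$ with $\tau$ arbitrarily small, initially disjoint from $y_L(\cdot,\tau)$ and contained in $[-L,L]$, which is delicate precisely because the blooming barrier lives near $x = +\infty$ as $\tau \searrow 0$. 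The paper avoids all of this by building an explicit \emph{graphical} subsolution of $\mathcal{V}=0$: one starts from the horizontal graph $b(y,t) = t + \zeta(t) + 1/\log(1+y)$, where $\zeta(t) = t^{-1}$ is the bloom solution, checks $\mathcal{H}(b) \geq 0$, inverts to get a supersolution of $\mathcal{V}=0$ on a half-plane, reflects via $\mathcal{V}(-y) = -\mathcal{V}(y)$ and translates to obtain $u(x,t) = 2 - \exp(1/(x - t - \zeta(t)))$, and then caps below by $-1$ to get a global viscosity subsolution $\overline{u}$. Since $\overline{u} \leq y_n$ on the parabolic boundary of $[-n,n]\times[0,\infty)$ for every $n$, the comparison principle gives $\overline{u} \leq y_n \leq y$ everywhere, and $\overline{u}(x,t) > 1-\epsilon$ for $x > t + \zeta(t) + 1/\log(1+\epsilon)$, which is exactly (iv). So you have correctly identified that blooming must enter here, but the barrier must be a graph moving up under $\mathcal{V}$ rather than a closed curve moving under CSF, and you need to build it explicitly.
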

To prove theorem \ref{thm1} it suffices to prove theorem \ref{main thm}, as theorem \ref{thm1} follows from theorem \ref{main thm} after setting $\gamma(x,t) := (x,y(x,t))$.

\subsection{Choosing our metric}\label{chapter choosing metric}
Before continuing we choose $\phi$ to be used in theorem \ref{main thm}.
\begin{restatable*}{lem}{warp}\label{exist of phi}
There exists a smooth function $\phi :\mathbb{R} \rightarrow  \mathbb{R}$ such that
\begin{enumerate}
\item $\phi$ is an even function, which is increasing on $(0,\infty)$.
\item $\phi(x) = 0$ for all $x \in [0,1]$.
\item $\phi'(x)>0$ and $\phi'$ is strictly increasing on $(1,\infty)$.
\item $\phi'(x) < \frac{1}{2}$ for all $x \in (1,\frac{3}{2})$.
\item $\phi'(x) = x^2$ for all $x \geq 2$.
\end{enumerate}
\end{restatable*}
The construction of $\phi$ is just an exercise in choosing suitable bump functions. See \ref{exist of phi} for details.\par
Other than the growth rate at infinity, our choices for $\phi$ are not crucial, but instead help reduce the technicality of our arguments. The last condition however is essential. The rapid growth of $\phi$ for large $x$ is what allows curves to bloom at infinity. Given $c > 0$, consider the unique maximal solution to $\mathcal{H}=0$ with initial condition $c$. We note that this solution remains constant in $y$ at all later times, and so the corresponding curves will always be straight lines parallel to the $y$-axis. We denote this solution by $c(t)$. Since the equation $\mathcal{H}(c)=0$ is equivalent to the ODE
\begin{equation*}
    \frac{\partial c}{\partial t}= -\phi'(c(t)) \leq 0,
\end{equation*}
the corresponding curves are translating towards the $y$-axis under CSF. By our choice of $\phi$, for large $c$ and small $t$ we have the explicit formula $c(t) = (c^{-1} + t)^{-1}$. Taking $c \rightarrow \infty$, for small positive $t$, we have the explicit solution $\zeta(t) = t^{-1}$ to the equation $\mathcal{H}(\zeta)=0$. Geometrically this means that lines parallel to the $y$-axis fly in from infinity in finite time under CSF.

\subsection{Graphical geodesics}\label{chapter geodesics}
Given our choice of $\phi$, we can consider what the geodesics in our space now look like. Since geodesics are invariant under the flow, they are useful barriers. Setting $\kappa = 0$ in equation (\ref{k,r}) yields the  first order ODE
\begin{equation*}
(y_x e^\phi)_x + \phi'(x) \cdot (y_x e^\phi) \cdot (1 + y_x^2 e^{2\phi}) = 0.
\end{equation*}
We can solve this equation for all $x \in \mathbb{R}$ to give solutions, for each constant $m \in (-1,1)$
\begin{equation*}
y_x  = \frac{m}{e^\phi\sqrt{e^{2\phi} - m^2}}.
\end{equation*}
Thus, we can parameterise the proper geodesics with a vertical graphical representation by
\begin{equation*}
\{ \sigma_{m,h} : \mathbb{R} \rightarrow \mathbb{R} \ \vert \ m \in (-1,1),  h \in \mathbb{R} \},
\end{equation*}
where
\begin{equation*}
\sigma_{m,h}(x) := h + \int_0^x \frac{m}{e^{\phi(s)} \sqrt{e^{2\phi(s)} - m^2}} ds, \quad \forall x \in \mathbb{R}.
\end{equation*}
We note that $\sigma_{0,h}$ parameterises the horizontal line $\{y=h\}$. For $m \neq 0$ however, $\sigma_{m,h}' \neq 0$ everywhere and the corresponding geodesic also has a horizontal graphical representation $\eta_{m,h} := (\sigma_{m,h})^{-1}$.

\subsection{CSF Dirichlet problems}\label{chapter dir probs}
As a first step to proving theorem \ref{main thm}, we consider the following CSF Dirichlet problems, the first of which is for vertical graphs. The solutions to this Dirichlet problem will be suitable approximations to our desired solution in theorem \ref{main thm}.
\subsubsection{Vertical graphs}
We start by defining our initial data. Fix $\chi: [0,1] \rightarrow [0,1]$ a smooth, decreasing cut off function such that $\chi \equiv 1$ on $[0,\frac{1}{4})$, $\chi \equiv 0$ on $(\frac{3}{4},1]$ and $ \chi' > -4$ on $[0,1]$. For each $n \in \mathbb{N}$, define the function $Y_{n} :[-n,n] \rightarrow [-1,1]$ to be the unique odd function such that,
\begin{equation*}
Y_{n}(x) := \begin{cases}
0 &: x \in [0, n-1]\\
\chi(x+1-n)) &: x \in [n-1,n]
\end{cases}
\end{equation*}
We trivially extend this to a function on $[-n,n] \times [0,\infty)$ by making it constant in time so that using it for our auxiliary conditions will correspond to fixing the endpoints of our curve. For each $n \in \mathbb{N}$ and $s \in (0,\infty]$ consider the Dirichlet problem for vertical graphs
\begin{equation}\label{dir,1}
V_n(s) := \begin{cases}
\mathcal{V}(y) =0 \quad &\text{in} \ \Omega_s \\
y = Y_{n} \quad &\text{on} \ \Gamma_s \\
\end{cases}
\end{equation}
where $\Omega:=(-n,n)$. Applying theorem \ref{max sol} and theorem \ref{reg of sol} to equation (\ref{CSF,2}), $\forall n \in \mathbb{N}$
$\exists \ T_n \in (0,\infty]$ and a unique maximal solution $y_n : [-n,n] \times [0,T_n) \rightarrow \mathbb{R}$ satisfying
\begin{enumerate}\label{eqn max soln y}
\item[(A)] $y_n \in C^\infty(\Omega_s)$,  $\forall s \in (0,T_n)$.
\item[(B)] $y_n$ solves $V_n(s)$, $\forall s \in (0,T_n)$.
\item[(C)] If $T_n < \infty$ then $\limsup_{s \rightarrow T_n} \abs{y_n(\cdot,s)}_{C^1([-n,n])} = \infty$.
\end{enumerate}
Finally by the symmetries of $Y_{n},\phi$ and $\mathcal{V}$, $y_n(\cdot,t)$ is an odd function for all $t \in [0,T_n)$.
\subsubsection{Horizontal graphs}
Although we have our sequence of vertical graphs $y_n$ to approximate an entire solution, we also need to switch gage and consider a Dirichlet problem for horizontal graphs. These horizontal graphical solutions will foliate regions of the plane and will be used in section \ref{chapter3} to show local gradient bounds for the sequence $y_n$.\par

Recall from the discussion in section \ref{chapter choosing metric} that we have the unique maximal solutions $c(t)$ to $\mathcal{H}=0$ starting from the constant initial condition $c>0$. Unlike for vertical graphs where we keep the endpoints fixed, we will instead use these solutions $c(t)$ for the auxiliary data. For each $c>0$ and $s \in (0,\infty]$ consider the Dirichlet problem
\begin{equation}
H_{c}(s) := \begin{cases}
\mathcal{H} (x) = 0 \quad &\text{in} \ (0,1) \times (0,s) \\
x = c \quad & \text{on} \ [0,1] \times \{0\}\\
x(0,t)=c(t),  \ x(1,t) = c \quad &\forall t \in (0,s).
\end{cases}
\end{equation}
so that on the parabolic wall $\{ y=1 \}$ the endpoint of the curve is fixed, but on the parabolic wall $\{ y=0 \}$ the endpoint is moving down at the same rate as the constant solution.
Applying theorem \ref{max sol} and theorem \ref{reg of sol} to equation (\ref{CSF,2}), $\forall c>0$, $\exists \ T_c \in (0,\infty]$ and a unique maximal solution $g_c : [0,1] \times [0,T_c) \rightarrow [0,\infty)$ satisfying,
\begin{enumerate}\label{eqn max soln c}
\item[(A)] $g_c \in C^\infty((0,1)\times(0,s))$,  $\forall s \in (0,T_c)$.
\item[(B)] $g_c$ solves $H_c(s)$, $\forall s \in (0,T_c)$.
\item[(C)] If $T_c < \infty$ then $\limsup_{s \rightarrow T_c} \abs{g_c(\cdot,s)}_{C^1([0,1])} = \infty$.
\end{enumerate}

Before taking a limit of the sequence $y_n$, we need to be sure that they exist for a uniform amount of time. We shall in fact show that each of the solutions is immortal. This is not obvious a priori; solutions to the Dirichlet problem converge towards a geodesic between the fixed endpoints which need not be graphical. In order to show that the solutions are immortal, it suffices to show that the solutions and their gradients cannot blow up in finite time, as otherwise this would contradict (C) in section \ref{eqn max soln y}.

\subsection{Uniform bounds}
Using barriers we can bound the region on which our solutions exist. This will give $C^0$-bounds everywhere as well as $C^1$-bounds on the parabolic boundary.
\subsubsection{Vertical graphs}
\begin{lem}\label{C0 radial}
For each $n \in \mathbb{N}$, let $y_n:[-n,n]\times[0,T_n)\rightarrow \mathbb{R}$ be the maximal solution to the Dirichlet problem constructed in Chapter \ref{chapter dir probs}. Then for each $t \in [0,T_n)$, the graph of $x \mapsto y_n(x,t)$ is contained in the parallelogram
\begin{equation}\label{radial region}
\{ (x,y) \in [-n,n] \times [-1,1] :  1 + 4(x-n) \leq y \leq -1 + 4(x+n) \} \}.
\end{equation}
\end{lem}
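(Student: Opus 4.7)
The plan is to trap the graph of $y_n(\cdot,t)$ inside the parallelogram by comparing $y_n$ against three families of explicit barriers via the parabolic comparison principle for the quasilinear operator $\mathcal{V}$. The barriers correspond to the three distinct boundary components of the parallelogram: the horizontal lines $y=\pm 1$ (top and bottom) and the affine functions $B_+(x):=-1+4(x+n)$ and $B_-(x):=1+4(x-n)$ (the two diagonal corner cuts).

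First I would establish the horizontal bounds $-1\leq y_n\leq 1$. The constant functions $y\equiv\pm 1$ are static solutions of $\mathcal{V}(y)=0$ (horizontal lines are geodesics, as recorded in section \ref{chapter geodesics}). On the parabolic boundary $\Gamma_{T_n}$ we have $y_n(\pm n,t)=\pm 1$ and $Y_n([-n,n])\subseteq[-1,1]$, so the two static solutions sandwich $y_n$ there. The comparison principle then yields $|y_n|\leq 1$ throughout $[-n,n]\times[0,T_n)$, which already delivers the containment in the slab $[-n,n]\times[-1,1]$ and also handles the diagonal constraints on the regions where $B_+\geq 1$ or $B_-\leq -1$.

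Next I would prove the upper-diagonal bound $y_n\leq B_+$ on the corner subdomain $[-n,\tfrac{1}{2}-n]\times[0,T_n)$. A direct computation gives
\[
\mathcal{V}(B_+) \;=\; -\,4\,\phi'(x)\,(1+\mu(4)).
\]
Since $\phi$ is even, non-decreasing on $[0,\infty)$ and flat on $[-1,1]$, we have $\phi'(x)\leq 0$ throughout $[-n,\tfrac{1}{2}-n]$, so $\mathcal{V}(B_+)\geq 0$ and $B_+$ is a supersolution on this strip. On the parabolic boundary of the subdomain: at $x=-n$ the two coincide at $-1$; at $x=\tfrac{1}{2}-n$ we have $B_+=1\geq y_n$ by the horizontal bound; and at $t=0$, writing $F:=B_+-Y_n$, we have $F(-n)=0$ and $F'(x)=4-Y_n'(x)\geq 0$ (from $|\chi'|<4$), so $F\geq 0$. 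Parabolic comparison on the subdomain then gives $y_n\leq B_+$. A completely symmetric argument (using $\phi'\geq 0$ on $[n-\tfrac{1}{2},n]$ to make $B_-$ a subsolution there) establishes $y_n\geq B_-$ on the opposite corner.

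Combining the three bounds traps the graph in the parallelogram (\ref{radial region}) for every $t\in[0,T_n)$. I expect no serious obstacle: the only points to verify carefully are the applicability of the comparison principle to $\mathcal{V}$ on a parabolic subcylinder with the given sub/supersolution pair (standard for quasilinear uniformly parabolic equations), and the initial ordering $Y_n\leq B_+$ in the corner, which reduces to the slope inequality $|Y_n'|<4$ built into the choice of cutoff $\chi$.
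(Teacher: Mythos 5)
Your argument is correct and uses the same barriers (the static geodesics $\pm 1$ and the affine functions of slope $4$) and the same comparison/avoidance principle as the paper, built on the computation $\mathcal{V}(\text{affine of slope }4) = -4\phi'(x)(1+\mu(4))$ together with a sign check on $\phi'$. The only (cosmetic) difference is the choice of subdomain for the diagonal barriers: you restrict $B_+$ to the corner $[-n,\tfrac{1}{2}-n]$ where $\phi'\le 0$ makes it a supersolution, whereas the paper runs the symmetric lower barrier $1+4(x-n)$ over the whole half $[0,n]$ where $\phi'\ge 0$ and pins the boundary value at $x=0$ using the oddness of $y_n(\cdot,t)$ --- both reduce to the same estimate.
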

\begin{proof}
Using that $\phi'(x) \geq 0$ for $x \geq 0$ and $\mu(p) > 0$ for any $p \in \mathbb{R}$, we have that
\begin{equation*}
\mathcal{V}(4x) = -4\phi'(x)(1+\mu(4)) \leq 0.
\end{equation*}
Using $0 \leq \frac{\partial y_n}{\partial x} (\cdot,0) \leq 4$ and that $y_n(\cdot,t)$ is an odd function for each $t$
\begin{align*}
1 + 4(x-n) \leq y_n(x,0), &\quad \forall x \in [0,n],\\
1 + 4(0-n) \leq 0 = y_n(0,\cdot), &\quad 1 + 4(n-n) = 1 = y_n(n,\cdot).
\end{align*}
Hence $1 + 4(x-n)$ is a lower barrier to $y_n$ over $[0,n]$. By symmetry, $-1 + 4(x+n)$ is an upper barrier over $[-n,0]$.  Combining with the upper and lower barriers $1$ and $-1$ gives the result.
\end{proof}
\subsubsection{Horizontal graphs}
We will also need that the horizontal graphs exist for a uniform amount of time. As with the vertical graphs, we shall show that each maximal solution $g_c$ is immortal.
\begin{lem}\label{C0 angular}
For each $c>0$, let $g_c:[0,1]\times[0,T_c) \rightarrow [0,\infty)$ be the maximal solution to the Dirichlet problem constructed in section \ref{chapter dir probs}. Then there exists a constant $m \in (0,1)$ depending on $c$ such that,
for each $t \in [0,T_c)$, the graph of $y \mapsto g_c(y,t)$ is contained in the region
\begin{equation}\label{spiral region}
 \{ (x,y) \in [c(t),c] \times [0,1] : \eta_{m,0}(y) \leq x \leq c(t)(1-y) + cy \},
\end{equation}
where $\eta_{m,0}$ refers to the horizontal geodesic constructed in section \ref{chapter geodesics}.
\end{lem}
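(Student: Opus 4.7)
The plan is a standard three-barrier comparison argument for the quasi-linear parabolic operator $\mathcal{H}$, using one barrier from above and two from below.

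For the upper bound, define the linear interpolation between the two Dirichlet data, $U(y,t) := (1-y) c(t) + y c$. Then $U_{yy} = 0$ and, using the ODE $c'(t) = -\phi'(c(t))$, one has $U_t = -\phi'(c(t))(1-y)$, so
\[
\mathcal{H}(U) = -\phi'(c(t))(1-y) + \phi'(U)\bigl(1 + \nu(U_y) U_y^2\bigr).
\]
Because $U \geq c(t) \geq 0$ pointwise and $\phi'$ is non-negative and non-decreasing on $[0,\infty)$ by lemma \ref{exist of phi} (it vanishes on $[0,1]$ and is strictly increasing on $(1,\infty)$), we get $\phi'(U) \geq \phi'(c(t))$, which rearranges to $\mathcal{H}(U) \geq \phi'(c(t))\bigl(y + \nu(U_y)U_y^2\bigr) \geq 0$. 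On the parabolic boundary $U$ matches $g_c$ exactly: $U(0,t)=c(t)$, $U(1,t)=c$, $U(y,0)=c$. The parabolic comparison principle for $\mathcal{H}$ then gives $g_c \leq U$, which is the upper bound in the statement and in particular $g_c \leq c$.

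For the lower bound on the first coordinate, observe that the constant-in-$y$ function $(y,t) \mapsto c(t)$ solves $\mathcal{H}\equiv 0$ by the defining ODE, and lies below $g_c$ on the parabolic boundary ($c(t) = g_c(0,t)$, $c(t)\leq c = g_c(1,t)$, $c(0) = c = g_c(y,0)$). Comparison gives $c(t) \leq g_c(y,t)$. For the geodesic lower barrier, recall from section \ref{chapter geodesics} that $\eta_{m,0} = (\sigma_{m,0})^{-1}$ is a static solution of $\mathcal{H}=0$ for every $m \in (0,1)$. To apply comparison we need only $\eta_{m,0}(y) \leq g_c$ on the parabolic boundary, and since $\eta_{m,0}(0)=0$ and $\eta_{m,0}$ is increasing, this reduces to the single condition $\sigma_{m,0}(c) \geq 1$. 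Using that $\phi \equiv 0$ on $[0,1]$,
\[
\sigma_{m,0}(c) \;\geq\; \int_0^{\min(c,1)} \frac{m}{\sqrt{1-m^2}}\, ds \;=\; \frac{m\, \min(c,1)}{\sqrt{1-m^2}} \;\longrightarrow\; \infty \quad \text{as } m \to 1^-,
\]
so one fixes $m=m(c) \in (0,1)$ with $\sigma_{m,0}(c) \geq 1$ and concludes $\eta_{m,0}(y) \leq g_c(y,t)$.

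There is no serious obstacle: the only non-trivial computation is verifying that the linear function $U$ is a supersolution, which rests entirely on the monotonicity of $\phi'$ together with $U \geq c(t)$. Everything else is an essentially verbatim application of the parabolic comparison principle for the quasi-linear operator $\mathcal{H}$. As a by-product the argument produces an a priori $C^0$ bound for $g_c$ that is independent of the lifespan $T_c$, which is presumably what the subsequent sections use to rule out $C^0$-blow-up and push $T_c = \infty$.
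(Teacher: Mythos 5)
Your proof is correct and follows the same three-barrier scheme as the paper: the linear interpolation $U(y,t)=(1-y)c(t)+yc$ as a supersolution (verified via monotonicity of $\phi'$ together with $U\geq c(t)$), and the static geodesic $\eta_{m,0}$ with $\sigma_{m,0}(c)\geq 1$ as a subsolution. The one small difference is that you explicitly establish the bound $c(t)\leq g_c$ by comparing with the $y$-independent solution $c(t)$, which the paper's proof leaves implicit even though it appears in the statement of the region; this is a genuine (if minor) tightening.
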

\begin{proof}
Using that $\phi'(x)\geq 0$ is increasing for $x>0$, $c(t)>0$ is decreasing, and $\nu(p) > 0$ for any $p \in \mathbb{R}$, we have that
\begin{align*}
\mathcal{H}(c(t)(1-y) + cy ) &\geq c'(t)(1-y) + \phi'(c(t)(1-y) + c y) \\
&\geq \phi'(c(t))(y-1) + \phi'(c(t)) \geq 0.
\end{align*}
So $c(t)(1-y) + cy $ is a supersolution. Moreover
\begin{align*}
c(0)(1-y) + cy = c, \quad
c(t)(1-0) + c\cdot0 = c(t), \quad
c(t)(1-1) + c\cdot1 = c.
\end{align*}
Hence $c(t)(1-y) + cy $ is an upper barrier to $g_c$. For a lower barrier to $g_c$, choose $m \in (0,1)$ such that $\sigma_{m,0}(c) = 1$. Since $m \neq 0$, $\sigma_{m,0}$ is invertible, and we have the horizontal graph $\eta_{m,0}$ which is an increasing geodesic with $\eta_{m,0}(0) = 0$ and $\eta_{m,0}(1) = c$.
\end{proof}
\subsection{Preservation of monotonicity}

Now that we have good control of the solutions on the parabolic boundary, we will use the maximum principle to give $C^1$-control on the interior. Before we can do this however, we need one final ingredient. Notice that the initial data is monotonic for all of the Dirichlet problems. We now show that this monotonicity is always preserved.

\subsubsection{Vertical graphs}\label{chapter mono vert}
\begin{prop}\label{mono radial}
For each $n \in \mathbb{N}$, let $y_n:[-n,n]\times[0,T_n)\rightarrow \mathbb{R}$ be the maximal solution to the Dirichlet problem constructed in section \ref{chapter dir probs}. Then $y_n(\cdot,t)$ is strictly increasing for each $t \in (0,T_n)$. Moreover, the gradient is strictly positive away from the initial time
\begin{equation*}
\frac{\partial y_n}{\partial x}(x,t) > 0, \quad \forall (x,t) \in [-n,n] \times (0,T_n).
\end{equation*}
\end{prop}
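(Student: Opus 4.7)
The plan is to differentiate the PDE $\mathcal{V}(y_n)=0$ once with respect to $x$ and apply standard parabolic maximum principles to $v:=\partial_x y_n$. A direct computation produces a linear parabolic equation
\[
v_t - \mu(v)\, v_{xx} - B(x,t)\, v_x - C(x,t)\, v = 0
\]
on $(-n,n)\times(0,T_n)$, whose coefficients $B$ and $C$ are smooth functions of $x$, $v$ and $v_x$; the zeroth-order coefficient $C$ contains a $\phi''(x)(1+\mu(v))$ piece whose sign is uncontrolled, but boundedness on each subrectangle $[-n,n]\times[0,T]$ with $T<T_n$ (which follows from property (A) of $y_n$) is all that is needed, after the routine substitution $v=e^{\lambda t}w$ to absorb $C$.

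Next I would verify $v\geq 0$ on the parabolic boundary of $[-n,n]\times(0,T_n)$. At $t=0$, $v(\cdot,0)=Y_n'\geq 0$ since $Y_n$ is monotone by construction from the cut-off $\chi$. On the lateral boundary, lemma \ref{C0 radial} confines $y_n$ to $[-1,1]$ while fixing $y_n(-n,t)\equiv -1$ and $y_n(n,t)\equiv 1$, so $y_n(\cdot,t)$ attains its minimum at $x=-n$ and its maximum at $x=n$, forcing $v(\pm n,t)\geq 0$ (using standard parabolic regularity up to the lateral boundary, which holds for the compatible, time-independent Dirichlet data of this problem). The weak maximum principle then yields $v\geq 0$ throughout $[-n,n]\times[0,T_n)$.

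Finally I would promote non-negativity to strict positivity. On the open interior the strong maximum principle applies directly: if $v(x_0,t_0)=0$ at some $(x_0,t_0)\in(-n,n)\times(0,T_n)$ then $v$ would vanish identically on the parabolic past, contradicting $Y_n'\not\equiv 0$. At the lateral endpoints $x=\pm n$ the Hopf boundary lemma closes the gap: assuming $v(n,t_0)=0$, Hopf forces the outward normal derivative $v_x(n,t_0)<0$; on the other hand, $y_n(n,\cdot)\equiv 1$ gives $y_{n,t}(n,t_0)=0$, so evaluating $\mathcal{V}(y_n)=0$ at $(n,t_0)$ with $v(n,t_0)=0$ collapses to $\mu(0)\,v_x(n,t_0)=0$ and hence $v_x(n,t_0)=0$, a contradiction. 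The case $x=-n$ is identical via the odd symmetry of $y_n$. The step I expect to be most delicate is precisely this boundary strict positivity, since it requires genuine regularity of $y_n$ up to $x=\pm n$ before the PDE and $v_x$ can be evaluated there; everything else reduces to standard linear-parabolic machinery applied to the quasilinear setup.
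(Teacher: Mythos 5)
Your proposal is correct, and it takes a genuinely different route from the paper. The paper never differentiates the PDE at this stage: it applies the Sturm/Angenent intersection principle (Theorem~\ref{int princ}) to compare $y_n(\cdot,t)$ with each constant function $h$, shows that for a dense set of $h$ the intersection count stays at one and the unique intersection remains transverse, and infers monotonicity and strict positivity of the gradient from Lemma~\ref{fact}. You instead pass directly to the linearisation $v=\partial_x y_n$, observe $v\ge 0$ on the parabolic boundary (at $t=0$ from $Y_n'\ge 0$, and on the lateral walls from the confinement of Lemma~\ref{C0 radial} together with the pinned boundary values), and then run the weak maximum principle after the standard exponential rescaling to absorb the sign-indefinite zeroth-order coefficient $\phi''(1+\mu(v))-2\mu(v)^2 e^{2\phi}(v_x+\phi' v)^2$, followed by the strong maximum principle and a Hopf boundary-point lemma to upgrade to strict positivity. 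Both routes work. What your approach buys is a more transparent treatment of the endpoints $x=\pm n$: your evaluation of $\mathcal{V}(y_n)=0$ at $(n,t_0)$ with $v(n,t_0)=0$ forces $v_x(n,t_0)=0$, cleanly contradicting Hopf, whereas the paper relies implicitly on a boundary-transversality consequence of Theorem~\ref{sturm} that it does not spell out. What the paper's approach buys is that it needs no Hopf lemma and is less sensitive to the sign-indefinite lower-order terms, since the intersection principle packages that bookkeeping internally. The delicate point you flag — regularity of $y_n$ and hence of $v$ up to $x=\pm n$ — is indeed required by your argument (and is also used tacitly by the paper elsewhere), but it is available: the Dirichlet data $Y_n$ is smooth, time-independent, and satisfies the compatibility conditions of all orders (all derivatives of $Y_n$ vanish at $\pm n$ by the choice of $\chi$), so Theorem~\ref{reg of sol} and the global Schauder estimate~\ref{G Schauder} give smoothness of $y_n$ on $\overline{\Omega}\times[0,s]$ for each $s<T_n$. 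So there is no circularity and no gap.
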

\begin{proof}
For any $h \in [-1,1]$ we apply the the intersection principle \ref{int princ} to the function $y_n$ and the constant solution $h$ to get that the intersection number between them is decreasing in time and finite for positive time. By the intermediate value theorem, there is always at least one intersection. Fix $(x,t) \in [-n,n] \times (0,T_n)$ and $h \in [-1,1]$ such that $y_n(x,t) = h$. Consider those values of $h$ for which the initial intersection number is one
\begin{equation*}
H:= \{ h \in [-1,1] : \abs{y_n(\cdot,0)^{-1}(h)} = 1 \}.
\end{equation*}
If $h \in H$, then \ref{int princ} implies that there is always exactly one intersection between $h$ and $y_n(\cdot,t)$ for all $t \in [0,T_n)$. Moreover, the intersection point is always transverse, so if $h \in H$ then $\frac{\partial y_n}{\partial x}(x,t) > 0$ by \ref{fact}. For $h \notin H$, we can find $x_1 \neq x_2 \in Y_{n}^{-1}(h)$. Since $Y_{n}$ is increasing, $[x_1,x_2] \subseteq Y_{n}^{-1}(h)$ and hence $Y_{n}^{-1}(h)$ has positive measure. This implies that $H$ is dense in $[-1,1]$.\par

We will now use the density of $H$ in $[-1,1]$ to show that $y_n(\cdot,t)$ is increasing for any $t \in [0,T_n)$. Fix $x_0 \in [-n,n]$ and let $h \in H$ with $h < y_n(x_0,t)$. Suppose $y_n(\cdot,t) < h$ at some point past $x_0$, so that there is a well defined element
\begin{equation*}
x_1:= \inf \{ z \in [x_0,n] : y_n(z,t) \leq h \}.
\end{equation*}
By the continuity of $y_n(\cdot,t)$, we have that $y_n(x_1,t) = h$. Also $h< y_n(x_0,t) \leq 1 = y_n(n,t)$, which means that $x_1 \in (x_0,n)$. However this is a clear contradiction to the fact that $\frac{\partial y_n}{\partial x}(x_1, t) > 0$, meaning that we must have $y_n(x,t) > h$ for every $x>x_0$, which by the density of $H$ in $[-1,1]$, allows us to conclude that $y_n(\cdot,t)$ is increasing.\par

Finally, to show that the gradient is positive everywhere, we repeat the argument from before but at positive times $s \in (0,T_n)$ instead of at time $0$, and then use that the intersection number is finite. If $\abs{ y_n(\cdot,s)^{-1}(h)} > 1$, then since $y_n(\cdot,s)$ is increasing $y_n(\cdot,s)^{-1}(h)$ has positive measure, contradicting the fact that there are only finitely many intersections \ref{int princ}. We conclude that $\abs{ y_n(\cdot,s)^{-1}(h)} =1$ and this single intersection point between $y_n(\cdot,s)$ and $h$ is transverse. As before, a transverse intersection point implies $\frac{\partial y_n}{\partial x}(x,s) > 0$ by \ref{fact}.
\end{proof}
\subsubsection{Horizontal graphs}
\begin{prop}\label{mono angular}
For each $c>1$, let $g_c:[0,1]\times[0,T_c) \rightarrow [0,\infty)$ be the maximal solution to the Dirichlet problem constructed in section \ref{chapter dir probs}. Then $g_c(\cdot,t)$ is strictly increasing for each $t \in (0,T_c)$. Moreover, the gradient is strictly positive away from the initial time
\begin{equation*}
\frac{\partial g_c}{\partial y}(y,t) > 0, \quad \forall (y,t) \in [0,1] \times (0,T_c).
\end{equation*}
\end{prop}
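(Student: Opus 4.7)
The strategy parallels the proof of Proposition \ref{mono radial}, applying the Angenent intersection principle (\ref{int princ}) to $g_c$ and a suitable comparison family of solutions of $\mathcal{H}=0$. Since constants are not solutions of $\mathcal{H}=0$, the correct analogue of the family $\{h\}$ from the vertical case is the one-parameter family of spatially-constant-in-$y$ solutions $\{c_0(t)\}_{c_0>0}$ from Section \ref{chapter choosing metric}, where $c_0(t)$ solves the ODE $\dot c_0 = -\phi'(c_0)$ with $c_0(0)=c_0$.

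As a preliminary step I would sharpen Lemma \ref{C0 angular} to $c(t) < g_c(y,t) < c$ on $(0,1)\times(0,T_c)$. For the lower bound, fix $c_0 \in (0, c)$: the difference $g_c - c_0(t)$ is strictly positive on the entire parabolic boundary --- positive initially since $c - c_0 > 0$, positive at $y=0$ since $c(t) > c_0(t)$ by monotonicity of the ODE flow in its initial data, and positive at $y=1$ since $c > c_0 \ge c_0(t)$ --- so the intersection principle gives $Z(t) \equiv 0$, whence $g_c > c_0(t)$ everywhere. Letting $c_0 \nearrow c$ yields the lower bound, with strict inequality at interior points via a similar comparison; the upper bound is symmetric.

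For monotonicity, I would argue by contradiction: suppose $g_c(\cdot, t_0)$ is not strictly increasing at some $t_0 \in (0, T_c)$. Since $g_c(0,t_0) = c(t_0) < c = g_c(1,t_0)$ and by the preceding step $g_c(\cdot,t_0)$ takes values strictly inside $(c(t_0), c)$ on $(0,1)$, there exists $x \in (c(t_0), c)$ with $|g_c(\cdot,t_0)^{-1}(x)| \ge 2$. Let $c_0$ be the unique initial condition with $c_0(t_0) = x$; since the flow is monotone increasing in initial data and sends $c \mapsto c(t_0) < x$, necessarily $c_0 > c$. Because $x < c$, there is a unique $t^* \in (0, t_0)$ with $c_0(t^*) = c$. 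On $[0, t^*)$ the boundary values of $g_c - c_0(t)$ are strictly negative at both $y = 0$ and $y = 1$, so the intersection principle gives $Z(t) = Z(0) = 0$ throughout; at $t^*$ at most a single zero enters from $y = 1$, after which the boundary signs stabilize ($-$ at $y=0$, $+$ at $y=1$) and $Z$ remains non-increasing with $Z(t) \le 1$. This contradicts $Z(t_0) \ge 2$. Strict positivity of $(g_c)_y$ then follows as in Proposition \ref{mono radial}: the unique interior intersection of $g_c(\cdot, t)$ with any $c_0(t)$ is transverse by Fact \ref{fact}, so $(g_c)_y \neq 0$, which together with weak monotonicity gives $(g_c)_y > 0$.

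The main obstacle is the range of the flow map: for $c_0 > c$, the image of $c_0 \mapsto c_0(t_0)$ is bounded above by $\lim_{c_0 \to \infty} c_0(t_0) = 1/t_0$ (using $\phi'(r) = r^2$ for $r \ge 2$), so when $t_0 > 1/c$ the values $x \in [1/t_0, c)$ cannot be realized by any $c_0 > c$. This is precisely the blooming-at-infinity phenomenon of Section \ref{chapter choosing metric} and illustrates how the bounded reach of the flow is an intrinsic feature of our choice of $\phi$. I would address this by first running the argument on $(0, 1/c]$, where the flow achieves the full range, to obtain strict monotonicity of $g_c(\cdot, \epsilon)$ for small $\epsilon > 0$; then restarting the argument from $t = \epsilon$ with $g_c(\cdot, \epsilon)$ as strictly monotone initial data, which supplies the analogue of the dense set $H$ from Proposition \ref{mono radial} and allows the transversality machinery to propagate strict monotonicity to all later times.
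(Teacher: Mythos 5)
Your strategy of comparing $g_c$ against the one-parameter family $\{c_0(\cdot)\}_{c_0>0}$ of $y$-independent solutions is the natural analogue of the constant comparisons in Proposition \ref{mono radial}, and you correctly put your finger on why it runs into trouble: blooming at infinity means the flow map $c_0 \mapsto c_0(t_0)$ has bounded range, so for $t_0$ large not every level $x \in (c(t_0), c)$ is reachable from any $c_0 > c$. The paper avoids this obstruction entirely by a different and cleaner device: rather than varying the \emph{initial value} of the ODE solution, it varies the \emph{time origin} of $g_c$. Given $(y,t)$ with $g_c(y,t) > c(t)$, one chooses $\tau \in [0,t)$ with $c(\tau) = g_c(y,t)$ (which always exists by the intermediate value theorem applied to the single solution $c(\cdot)$ on $[0,t]$, since $c(0)=c \geq g_c(y,t) > c(t)$), and then compares $c(s)$ against the time-shifted solution $g_c(\cdot, s+(t-\tau))$. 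The boundary signs are automatically correct, the initial intersection count is one at $y=1$, and the intersection principle applies uniformly in $t$ with no need to partition time. The paper also handles the boundary locus $\{y : g_c(y,t)=c(t)\}$ by a separate application of the intersection principle to $g_c$ and $c$ directly; your sharpening of Lemma \ref{C0 angular} addresses the interior but not the derivative at $y=0$.

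As written, your workaround is not complete. A single restart from $t=\epsilon$ extends strict monotonicity only to roughly $t_0 < \epsilon + \zeta^{-1}(c)$, since the backward trajectory of the ODE through $(y_*, t_0, x_*)$ blows up after backward time $\zeta^{-1}(x_*)$; covering all $t_0$ therefore requires an induction over restart times, with the niceness of the boundary data (in the sense required by Theorem \ref{sturm}) re-verified at each stage, including the time-dependent boundary zero at $y=1$ when $c_0 > c$. Moreover your first stage needs the analogue of the dense set $H$: at time $0$ the initial datum $g_c(\cdot,0) \equiv c$ meets the constant $c_0$ either everywhere or nowhere, so the count-one starting point that makes the zero-counting work is not automatic for $c_0 < c$ and you would instead have to argue as the paper does that the lone intersection at $y=1$ is a simple boundary zero. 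None of these points is fatal, but together they make the argument considerably heavier than the paper's single time-shift; if you pursue your route, spell out the iteration and the boundary-zero bookkeeping explicitly. Also note that Lemma \ref{fact} by itself only yields the weak inequality $(g_c)_y \geq 0$ at the intersection; the strict positivity comes from the zeros being simple for positive time, which is a consequence of Theorem \ref{sturm} rather than of Lemma \ref{fact}.
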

\begin{proof}
We repeat the proof of proposition \ref{mono radial} but replace constant solutions with the solutions $c(t)$ from section \ref{chapter choosing metric}.\par

Fix $(y,t) \in [0,1] \times (0,T_c)$ and suppose $g_c(y,t) > c(t)$. Then there exists some $\tau \in [0,t)$ such that $c(\tau) = g_c(y,t)$. In particular, the solutions $c(s)$ and $g_c(y,s+(t-\tau))$, defined on $[0,1] \times [0,T_c - (t-\tau))$, intersect at $(y,\tau)$. In order to apply the intersection principle, we note that
\begin{itemize}
\item $g_c(1, s + (t-\tau)) = c > c(s)$, $\forall s > 0$.
\item $g_c(0, s + (t-\tau)) = c(s+(t-\tau)) < c(s)$,  $\forall s \geq 0$.
\item $\frac{\partial g_c}{\partial y} (1,t-\tau) \geq  c - c(t) >0$, from the uniform bounds on $g_c$ (lemma \ref{C0 angular}).
\end{itemize}
So the intersection principle \ref{int princ} holds and there is a unique transverse point of intersection between $c(s)$ and $g_c(y,s+(t-\tau))$ for all $s \in [0,T_c-(t-\tau))$. In particular $\frac{\partial g_c}{\partial y} (y,t) > 0$ by \ref{fact}.\par
Using again the uniform bounds on $g_c$ (lemma \ref{C0 angular}), $g_c(\cdot,t)$ takes values in $[c(t) , c]$, and hence we have shown that $g_c(\cdot,t)$ is decreasing. The only thing left to deal with is the set of points
\begin{equation*}
\{ y \in [0,1] :  g_c(y,t) = c(t) \},
\end{equation*}
which is a closed interval containing $0$. Suppose now that $g_c(y,t) = c(t)$ and consider the intersection number for the solutions $g_c$ and $c$. We note that
\begin{itemize}
\item $g_c(0, s) = c(s)$, $\forall s > 0$.
\item $g_c(1, s) = c > c(s)$,  $\forall s > 0$.
\end{itemize}
Therefore the intersection principle \ref{int princ} holds and the number of intersections between between $g_c$ and $c$ is decreasing and finite on $(0,T_c)$. Combining this with $g_c(\cdot,t)$ increasing, we have that $g_c(\cdot,t) = c(t)$ only at $0$ for any $t>0$. We conclude that the intersections are always transverse and so $\frac{\partial g_c}{\partial y} (0,t) > 0$ by \ref{fact}.
\end{proof}
\subsection{Longtime existence of solutions}
We now have everything we need to show that the maximal solutions $y_n$ and $g_c$ to both classes of Dirichlet problems considered in section \ref{chapter dir probs} are immortal.
\subsubsection{Vertical graphs}
\begin{thm}
For each $n \in \mathbb{N}$, let $y_n:[-n,n]\times[0,T_n)\rightarrow \mathbb{R}$ be the maximal solution to the Dirichlet problem constructed in Chapter \ref{chapter dir probs}. Then $T_n = \infty$ and $y_n \in C^{\infty}( [-n,n] \times [0,\infty))$ for each $n \in \mathbb{N}$.
\end{thm}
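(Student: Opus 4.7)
The plan is to invoke the blow-up criterion (C) for the maximal solution: $T_n < \infty$ would force $\limsup_{s \nearrow T_n} |y_n(\cdot,s)|_{C^1([-n,n])} = \infty$, so it suffices to establish a uniform-in-$t$ $C^1$ bound on $y_n$ over every finite time interval $[0,T] \subset [0,T_n)$. Once such a bound is in hand, $\mathcal{V}(y_n)=0$ is uniformly parabolic with smooth, bounded coefficients (since $\mu(y_x) \in (0,1]$ is then bounded below), and classical parabolic Schauder theory together with the smoothness and corner compatibility of $Y_{n}$ bootstraps $y_n$ to $C^\infty$ on the whole cylinder $[-n,n] \times [0,\infty)$, contradicting (C).

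The $C^0$ bound $\|y_n\|_\infty \leq 1$ is lemma \ref{C0 radial}. For the gradient, I would first control $\partial_x y_n$ on the parabolic boundary: on the initial slice, $|Y_n'| \leq 4$ follows directly from $|\chi'| \leq 4$, and on the lateral walls $\{\pm n\} \times [0,T_n)$, the linear lower/upper barriers $\pm 1 + 4(x\mp n)$ of lemma \ref{C0 radial} pinch $y_n$ between themselves and the pinned endpoint value $\pm 1$, forcing $|\partial_x y_n(\pm n,t)| \leq 4$ for every $t$.

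The crux is the interior gradient estimate. I would use the family of graphical geodesics $\{\sigma_{m,h}\}_{(m,h)\in(-1,1)\times\mathbb{R}}$ from section \ref{chapter geodesics} as a foliation of barriers. Each $\sigma_{m,h}$ is a stationary solution of $\mathcal{V}=0$ with explicit slope $m/(e^\phi\sqrt{e^{2\phi}-m^2})$, which is uniformly controlled on bounded regions for $|m|$ bounded away from $1$. Using the monotonicity of $y_n(\cdot,t)$ (proposition \ref{mono radial}) and the intersection principle \ref{int princ} applied to the pairs $(y_n,\sigma_{m,h})$, one can sandwich $y_n$ between nearby geodesics whose horizontal separation controls $1/\partial_x y_n$ from below, yielding $|\partial_x y_n| \leq C(n,T)$. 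A lighter, more PDE-flavoured alternative is to differentiate $\mathcal{V}(y_n)=0$ once in $x$ and apply the weak maximum principle to $v := \partial_x y_n$, which satisfies a linear parabolic equation whose coefficients are bounded once $y_n$ is; the boundary bound $|v|\leq 4$ then propagates inwards to give the same conclusion.

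This interior gradient estimate is the only non-routine step, everything else (the $C^0$ bound, monotonicity, Schauder regularity, maximum principle) being either already in hand or classical. The geodesic-foliation approach is geometrically natural and foreshadows the local uniform $C^1$ bound developed in section \ref{chapter3}, whereas the differentiated-PDE approach is sufficient here where only an $n$-dependent bound is needed.
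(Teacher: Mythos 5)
Your ``lighter, more PDE-flavoured alternative'' is precisely what the paper does, so you have the right idea, but your description of why it works glosses over the one non-trivial point. Differentiating $\mathcal{V}(y_n)=0$ in $x$ gives for $v:=\partial_x y_n$ the equation
\begin{equation*}
v_t = \mu(v)v_{xx} + \phi'(x)(1+\mu(v))v_x + \phi''(x)(1+\mu(v))v - 2\mu(v)^2 e^{2\phi}\bigl(v_x + \phi'(x)v\bigr)^2 v,
\end{equation*}
and the final term is quadratic in $v_x$; it cannot be rebranded as a ``bounded coefficient times $v$'' and fed into the linear maximum principle. The way the paper handles it is to invoke proposition~\ref{mono radial} (preservation of monotonicity) to get $v\geq 0$, which makes the whole last term nonpositive and lets one simply drop it, leaving a genuinely linear parabolic inequality with bounded (on the fixed compact interval $[-n,n]$, using $\phi''$ continuous and $\mu\leq 1$) zeroth-order coefficient, to which \ref{linear max princ} applies. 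Your write-up does not flag that $v\geq 0$ is the key input here, and without it the ``propagates inwards'' step does not follow. Your boundary control $0\leq v(\pm n,t)\leq 4$ via the linear barriers of lemma~\ref{C0 radial} is correct and is exactly how the paper closes the argument.

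Regarding the geodesic-foliation route you offer as your primary approach: as stated it does not cover the whole interval $[-n,n]$. The intersection-principle step requires each foliating geodesic $\sigma_{m,h}$ to exit the strip $\{|y|\leq 1\}$ strictly before reaching the parabolic walls $\{\pm n\}$, so that the geodesic and $y_n(\cdot,t)$ meet with a forced initial intersection count of one. Since the slope of $\sigma_{m,h}$ decays rapidly as $|x|$ grows (because $\phi$ grows), these geodesics only escape $[-1,1]$ over a bounded central region; the paper's lemma~\ref{ssn grad bd} makes exactly this point and therefore restricts the geodesic foliation to a fixed interval $[-a,a]$ with $a$ small, supplementing it with entirely different foliations (translated solutions $F$, and the horizontal graphs $g_c$) to reach larger $x$. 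Moreover, that multi-part argument of section~\ref{chapter3} is carried out only after the present theorem is proved and the $y_n$ are known to be immortal, so it is not available here without care about the logical order. In short: stick with your differentiated-PDE alternative, but make the role of $v\geq 0$ explicit.
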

\begin{proof}
Suppose $T_n < \infty$. Since our solution is smooth, we can differentiate (\ref{CSF,2}) to get the evolution equation for the gradient $v:= \frac{\partial y_n}{\partial x}$ on $(-n,n) \times (0,T)$
\begin{equation}\label{grad evo}
v_t = \mu(v) v_{xx} + \phi'(x) (1+\mu(v)) v_x + \phi''(x)(1+\mu(v))v - 2\mu(v)^2 e^{2\phi} (v_x + \phi'(x)v)^2 v.
\end{equation}
By proposition \ref{mono radial}, $v \geq 0$ for all time, and so $v$ satisfies the differential inequality
\begin{equation}\label{grad ineq}
v_t - \mu(v) v_{xx} - \phi'(x) (1+\mu(v)) v_x - \phi''(x)(1+\mu(v))v \leq 0.
\end{equation}
Furthermore, since $\phi''(x) (1+\mu(v)) \leq M$ on the finite interval $[-n,n]$ for some constant $M$, by the maximum principle \ref{linear max princ}
\begin{equation}\label{grad lower bd}
v \leq e^{MT_n} \sup_{\Gamma_{T_n}} \left( v \right).
\end{equation}
By lemma \ref{C0 radial}, $v(\pm n,t) \leq 4$ and we have the following contradiction
\begin{equation*}
\limsup_{s \rightarrow T_n} \ \abs{ y_n (\cdot,s)}_{C^1([-n,n])} \leq 1 + 4e^{MT_n} < \infty. \qedhere
\end{equation*}
\end{proof}
\subsubsection{Horizontal graphs}
\begin{thm} For each $c>0$, let $g_c:[0,1]\times[0,T_c) \rightarrow [0,\infty)$ be the maximal solution to the Dirichlet problem constructed in section \ref{chapter dir probs}. Then $T_c = \infty$ and $g_c \in C^{\infty}( [0,1] \times [0,\infty))$ for each $c>0$.
\end{thm}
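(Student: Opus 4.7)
The plan is to mirror the argument just given for vertical graphs, arguing by contradiction: assume $T_c < \infty$, then by property (C) of the maximal solution, $\limsup_{s \to T_c} \lvert g_c(\cdot,s)\rvert_{C^1([0,1])} = \infty$. The $C^0$ part is already controlled by lemma \ref{C0 angular}, which traps $g_c$ between the geodesic $\eta_{m,0}$ and the linear supersolution $c(t)(1-y)+cy$, so in particular $g_c$ takes values in a fixed compact interval of $(0,\infty)$ (recall $c(t)$ is bounded below away from $0$ because $\phi'\equiv 0$ on $[0,1]$). Hence it suffices to bound $w:=\tfrac{\partial g_c}{\partial y}$.

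Differentiating $\mathcal{H}(g_c)=0$ in $y$ gives a quasilinear evolution equation for $w$ of the form
\begin{equation*}
w_t = \nu(w)\,w_{yy} + A(y,t)\,w_y + B(y,t)\,w - C(y,t)\,w,
\end{equation*}
where $B$ involves $\phi''(g_c)$ (and lower-order products of bounded quantities) and $C \geq 0$ is a quadratic term analogous to the $2\mu(v)^2 e^{2\phi}(v_x+\phi'(x)v)^2$ factor appearing in (\ref{grad evo}). By proposition \ref{mono angular} we have $w \geq 0$, so the $-Cw$ contribution is nonpositive and can be dropped, yielding the linear differential inequality
\begin{equation*}
w_t - \nu(w)\,w_{yy} - A(y,t)\,w_y - B(y,t)\,w \leq 0.
\end{equation*}
Because $g_c$ remains in a compact subset of $(0,\infty)$, the coefficient $B$ is uniformly bounded on $[0,1]\times[0,T_c)$ by some constant $M$, so the linear maximum principle \ref{linear max princ} gives $w \leq e^{MT_c}\sup_{\Gamma_{T_c}} w$.

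The parabolic boundary values of $w$ are finite: at $t=0$ we have $g_c \equiv c$ so $w = 0$; at $y=0$ the upper barrier $c(t)(1-y)+cy$ matches $g_c$ and forces $w(0,t) \leq c - c(t) \leq c$; at $y=1$ the lower barrier $\eta_{m,0}$ matches $g_c$ and forces $w(1,t) \leq \eta_{m,0}'(1)$, a finite constant depending only on $c$. Combining these gives $\sup_{\Gamma_{T_c}} w \leq \max\{c,\eta_{m,0}'(1)\}$, hence $w$ is bounded on $[0,1]\times[0,T_c)$, contradicting the blow-up alternative (C). Thus $T_c=\infty$, and smoothness on $[0,1]\times[0,\infty)$ then follows from standard parabolic regularity applied to equation (\ref{CSF,1}) with the now uniform $C^1$-bound.

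The main obstacle is ensuring the linearised coefficient $B$ is uniformly bounded in time; this crucially relies on $g_c$ being bounded away from $0$ and $\infty$, which in turn uses that $c(t)$ itself does not collapse (a consequence of $\phi'\equiv 0$ on $[0,1]$) together with the lower geodesic barrier in lemma \ref{C0 angular}. The boundary gradient estimate at $y=1$ is a bit more delicate than in the vertical case because the linear upper barrier does not pin down the slope there — it is the geodesic barrier from below that does the work.
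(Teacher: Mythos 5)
Your proposal is correct and follows essentially the same route as the paper: differentiate the horizontal-graph equation, use the monotonicity from proposition \ref{mono angular} to discard the unfavourable quadratic term, bound the zeroth-order coefficient by the $C^0$ control $g_c \in [c(t),c]$, apply the linear maximum principle \ref{linear max princ}, and read off parabolic-boundary gradient bounds from the barriers of lemma \ref{C0 angular} touching $g_c$ at $y=0$ and $y=1$. The only cosmetic differences are that the paper groups the first-order term into a completed square that is dropped wholesale, while you expand it, and the paper handles the trivial case $c\in(0,1]$ (where $g_c$ is the static line $\{x=c\}$) separately before assuming $c>1$.
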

\begin{proof}
Since $\phi \equiv 0$ for $\abs{x}\leq 1$, if $c \in (0,1]$ then $g_c$ is just the line static line $\{x=c\}$, so we may assume $c>1$. Suppose $T_c < \infty$. Since our solution is smooth, we can differentiate (\ref{CSF,1}) to get the evolution equation for the gradient $w:= \frac{\partial g_c}{\partial y}$ on $(0,1) \times (0,T_c)$
\begin{equation}\label{grad evo angular}
w_t = \nu(w) w_{yy}  + \left( 2 \phi'(x)^2 e^{2\phi} \nu(w)  - \phi''(x) (1 + \nu(w) w^2) \right) w - 2\nu(w)^2 (w_{y} + \phi'(x) e^{2\phi})^2 w.
\end{equation}
By proposition \ref{mono angular}, $w \geq 0$ for all time, and so $w$ satisfies the differential inequality
\begin{equation}\label{grad ineq angular}
w_t - \nu(w) w_{\theta \theta}  - 2 \phi'(x)^2 e^{2\phi} \nu(w)  w  \leq 0.
\end{equation}
Furthermore, since $2 \phi'(x)^2 e^{2\phi} \nu(w)  \leq M$ for some constant $M>0$ (we used that $g_c \leq c$),  we can apply the maximum principle \ref{linear max princ}
\begin{equation}\label{grad lower bd angular}
w \geq e^{MT_c} \sup_{\Gamma_{T_c}}  w.
\end{equation}
By lemma \ref{C0 angular}, $w(0,t) \leq c-c(T)$ and $w(1,t) \leq \eta_{m,0}'(1)$, giving the following contradiction
\begin{equation*}
\limsup_{s \rightarrow T_c} \ \abs{ g_c (\cdot,s)}_{C^1([0,1])} \leq c + e^{MT_C} \max\{c-c(T_c), \eta_{m,0}'(1)\} < \infty. \qedhere
\end{equation*}
\end{proof}

\section{Constructing an entire solution}\label{chapter3}
In the previous section, we constructed a sequence of continuous functions
\begin{equation*}
y_n : [-n,n] \times [0,\infty) \rightarrow [-1,1], \quad \forall n \in \mathbb{N},
\end{equation*}
such that
\begin{enumerate}
\item [(i)] $y_n$ is smooth on $[-n,n] \times (0,\infty)$ and solves $V_n(T)$ for any  $T > 0$.
\item [(ii)] $y_n(\cdot,t)$ is a strictly increasing odd function with positive gradient for any $ t>0$.
\item [(iii)] $y_n(x, 0) = 0$ for $   x \in [1-n, n-1]$.
\item [(iv)] $y_n(x,t)$ is a decreasing sequence in $n$ for any $(x,t) \in [0,n] \times [0,\infty)$.
\end{enumerate}
To see (iv) we note that, for $m \leq n$
\begin{align*}
&y_{n}(x,0) \leq y_{m}(x,0), \quad \forall x \in [0,m].\\
&y_n(0,t)= 0 =y_m(0,t), \ \text{and} \ y_n(m,t) \leq 1 = y_m(m,t), \quad \forall t > 0.
\end{align*}
(iv) then follows from the avoidance principle \ref{avoidance princ}.\par
These properties allow us to do several things:
\begin{itemize}
\item By (ii),  $y_n(\cdot,t)$ is invertible for each $n \in \mathbb{N}$ and $t>0$. Thus, we can change gage and consider the curves as horizontal graphs
\begin{equation*}
x_n : [-1,1] \times (0,\infty) \rightarrow [0,n] , \quad x_n(\cdot,t):= y_n(\cdot,t)^{-1}, \ \forall n \in \mathbb{N}.
\end{equation*}
\item By (i), the horizontal graphs $x_n$ are smooth and satisfy $\mathcal{H}(x_n) = 0$ on $[-1,1] \times (0,\infty)$.
\item (iv) allows us to take a limit of this sequence to get a function 
$y: \mathbb{R} \times [0,\infty) \rightarrow [-1,1]$,
\begin{equation*}
y(x,t) := \lim_{n\rightarrow \infty} y_n(x,t), \quad \forall (x,t) \in \mathbb{R} \times [0,\infty). 
\end{equation*}
\item By (iii), $y(\cdot,0) \equiv 0$ on $\mathbb{R}$ and by (ii), $y(\cdot , t)$ is an increasing odd function for any $t >0$. 
\end{itemize}
 We want to show that $y$ is smooth and solves $\mathcal{V}(y)=0$ on $\mathbb{R} \times (0,\infty)$. This will follow if we can show that the convergence of the sequence $y_n$ to $y$ is locally smooth on $\mathbb{R} \times (0,\infty)$. We also want to show that $y$ is continuous on $\mathbb{R} \times [0,\infty)$. However,  once we have shown local smooth convergence on $\mathbb{R} \times (0,\infty)$, continuity on $\mathbb{R} \times [0,\infty)$ follows from the monotonicity of each term in the sequence and the monotonicity of the limit.
 \begin{lem}
 If $y_n$ converges to $y$ locally smoothly on $\mathbb{R} \times (0,\infty)$, then $y \in C(\mathbb{R} \times [0,\infty))$.
 \end{lem}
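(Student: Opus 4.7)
Since local smooth convergence of $y_n$ to $y$ on $\mathbb{R} \times (0,\infty)$ already yields continuity of $y$ at every interior point, the only issue is continuity at boundary points $(x_0,0)$. By the odd symmetry of each $y_n(\cdot,t)$, noted in the bullet list preceding the lemma, the limit $y(\cdot,t)$ is itself odd, so it suffices to treat $x_0 \geq 0$. Because $y(x_0,0)=0$, I need to show: for every $\epsilon>0$ there exists $\delta>0$ with $|y(x,t)|<\epsilon$ whenever $|x-x_0|<\delta$ and $0 \leq t <\delta$.

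My plan is a squeeze argument combining properties (iii) and (iv) of the sequence $y_n$. First, pick $N$ large enough that $[x_0-1,\,x_0+1] \subset [1-N,\,N-1]$; property (iii) then yields $y_N(\cdot,0) \equiv 0$ on that interval. Since $y_N$ is continuous on the compact box $[x_0-1,\,x_0+1]\times[0,1]$, it is uniformly continuous there, so I may choose $\delta \in (0,1)$ small enough that $0 \leq y_N(x,t)<\epsilon$ for every $x \in [x_0-1,\,x_0+1]$ and every $t<\delta$, possibly shrinking $\delta$ so that $(x_0-\delta,x_0+\delta) \subset [x_0-1,x_0+1]$.

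To convert this into a bound on $y$, I use that $y(\cdot,t)$ is increasing and odd, so $|y(x,t)| \leq y(|x|,t)$, and I use property (iv), which gives $0 \leq y(s,t) \leq y_N(s,t)$ for every $s \in [0,N]$. Applied at $s=|x|$ with $|x-x_0|<\delta$, this places $|x|$ inside $[x_0-1,x_0+1]$ (a short case split separating $x_0 \leq 1$ from $x_0 > 1$), and hence $|y(x,t)| \leq y_N(|x|,t)<\epsilon$, as required.

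No step here is a genuine obstacle; the argument is the familiar observation that a monotone limit of continuous functions which all agree at a boundary point is continuous at that boundary point. The only thing to be careful with is the bookkeeping that ensures $|x|$ really does land in the compact interval on which uniform continuity of $y_N$ was invoked, which is handled by the short case distinction above.
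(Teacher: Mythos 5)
Your argument is correct and follows essentially the same route as the paper: reduce to nonnegative $x_0$ by oddness, use continuity of a fixed $y_N$ (whose initial data vanishes near $x_0$) to get smallness on a short time interval, then transfer this to $y$ via the spatial monotonicity of $y_N(\cdot,t)$ and the monotone decrease of $y_n$ in $n$. The paper's version is a touch leaner only in that it fixes $\delta\in(0,x_0)$ for $x_0>0$, which keeps $x$ positive and avoids both the $|x|$ substitution and your $x_0\le1$ versus $x_0>1$ case split.
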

 \begin{proof}
 Since $y(\cdot,0)$ is an odd function, it suffices to show that $y$ is continuous at any point $(x_0,0)$ with $x_0>0$. Fix $\epsilon > 0$ and choose $n$ large enough so that $y_n(x_0,0)=0$. Since $y_n$ is continuous at $(x_0,0)$, there exists some $\delta \in (0,x_0)$ such that 
 \begin{equation*}
     |y_n(x,t)| \leq \epsilon, \quad \forall (x,t) \in (x_0-\delta,x_0+\delta) \times [0,\delta).
 \end{equation*}
 As $y_n(x,t)$ is an increasing function in $x$ for any fixed $t$
 \begin{equation*}
     0 = y_n(0,t) \leq y_n(x,t) < \epsilon \quad \forall (x,t) \in (x_0-\delta,x_0+\delta) \times [0,\delta).
 \end{equation*}
 Finally, as $y_n(x,t)$ is a decreasing sequence in $n$
 \begin{equation*}
     0 \leq y(x,t) < \epsilon \quad \forall (x,t) \in (x_0-\delta,x_0+\delta) \times [0,\delta). \qedhere
 \end{equation*}
 \end{proof}

\subsection{Local gradient bounds}
The goal of this next section is to show the following theorem.
\begin{thm}[Local gradient bounds]\label{loc grad bd}
Fix $k,T>0$. Then there exists $M_1(k,T) >0$ such that
\begin{equation*}
\abs{y_n(\cdot,t)}_{C^1([-k,k])} \leq M_1, \quad \forall t \in [0,T], \quad \forall n \in \mathbb{N}.
\end{equation*}
That is, on any compact region of space time, our sequence has a uniform spatial $C^1$-bound.
\end{thm}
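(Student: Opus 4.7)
The $C^0$ bound $\abs{y_n(\cdot,t)} \leq 1$ is immediate from lemma \ref{C0 radial}, and odd symmetry in $x$ reduces the task to bounding $v := \partial_x y_n$ on $[0,k] \times [0,T]$. My plan has two stages. First I would prove a boundary gradient bound on the shell $\{\abs{x} = k+1\}$ by comparing the curves $y_n(\cdot, t)$ with the horizontal-graph solutions $g_c$ of section \ref{chapter dir probs} via the intersection principle \ref{int princ}. Then I would propagate this boundary bound inward to $[-k, k]$ using the linear maximum principle \ref{linear max princ} on the gradient evolution inequality (\ref{grad ineq}). Crucially, the family $\{g_c\}$ is $n$-independent, and on the fixed interval $[-k-1, k+1]$ the coefficient $\phi''$ is bounded by a constant depending only on $k$, so both steps produce $n$-independent bounds.

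For the boundary bound I would view $y_n(\cdot, t)$ and $g_c(\cdot, t)$ as planar curves evolving under CSF. At $t = 0$ the vertical segment $g_c(\cdot, 0) = \{x = c, \, y \in [0,1]\}$ meets the graph of $Y_n$ in the single transverse point $(c, Y_n(c))$, so \ref{int princ} forces at most one transverse intersection at every later time. The intersection determinant
\begin{equation*}
\det\begin{pmatrix} 1 & \partial_y g_c \\ \partial_x y_n & 1 \end{pmatrix} = 1 - \partial_x y_n \cdot \partial_y g_c
\end{equation*}
equals $1$ at $t=0$, so orientation preservation gives $\partial_x y_n \cdot \partial_y g_c < 1$ at every subsequent transverse intersection. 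Differentiating $\mathcal{H}(g_c) = 0$ in $c$ yields $\partial_c g_c > 0$ via the strong maximum principle on the linearised equation, so the family $\{g_c(\cdot, t)\}_c$ foliates a region of the plane, and every point $(x_0, y_0) \in [k+1, k+2] \times [-1,1]$ on $y_n(\cdot, t)$ lies on a unique leaf $g_{c_*}$ with $c_*$ in some compact range $[1, N(k,T)]$.

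The hard part, and essentially the whole content of the gradient estimate, will be establishing a uniform positive lower bound $\partial_y g_{c_*}(y_0, t) \geq \delta > 0$ over this compact range of parameters. Since $g_c(\cdot, 0) \equiv c$ is constant in $y$, this lower bound necessarily degenerates as $t \searrow 0$, so I would split into two regimes. For $t \in [\tau, T]$ with fixed $\tau > 0$, the strong maximum principle applied to the evolution equation (\ref{grad evo angular}) for $w := \partial_y g_c$, together with the Hopf boundary point lemma at $y = 0$ (where the strictly decreasing Dirichlet datum $c(t)$ forces a strictly positive normal derivative of $w$), should yield the uniform $\delta(\tau) > 0$; for $t \in [0, \tau]$ with $\tau$ small, the explicit initial gradient bound $\abs{Y_n'} \leq 4$ combined with $n$-independent short-time parabolic regularity on the fixed region $[-k-1, k+1]$ would handle the remaining regime. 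Combining the two regimes with the transversality inequality produces a bound $\partial_x y_n \leq 1/\delta$ on the shell $\{\abs{x} \in [k+1, k+2]\} \times [0, T]$. Finally, with this boundary bound in hand, inequality (\ref{grad ineq}) together with \ref{linear max princ} — using that $\phi''$ is bounded on $[-k-1, k+1]$ by some $M(k)$ independent of $n$ — gives an interior estimate $v \leq e^{M(k)T} \sup_{\partial} v$ on $[-k, k] \times [0, T]$, which is the required $M_1(k, T)$.
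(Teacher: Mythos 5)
Your overall architecture genuinely departs from the paper's. The paper covers the full rectangle $[-k,k]\times[0,T]$ with three foliations (geodesics $\sigma_{m,h}$ for a fixed small neighbourhood $[-a,a]$ in Lemma~\ref{ssn grad bd}, vertical translates of a single solution $F$ for a short time window in Lemma~\ref{st grad bd}, and the $g_c$ family for the remaining middle region in the proof of Theorem~\ref{loc grad bd}), whereas you propose to prove only a boundary gradient bound on the shell $\{|x|=k+1\}$ and then propagate it inward via the linear maximum principle~\ref{linear max princ} applied to the gradient inequality~(\ref{grad ineq}). That propagation step is sound: on the fixed domain $[-k-1,k+1]$, $\phi''(1+\mu(v))$ is bounded by an $n$-independent $C_0(k)$, the initial slice gives $v\leq 4$, and the walls would be controlled by your boundary bound, so it replaces two of the paper's three foliations with a single application of~\ref{linear max princ}. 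Your $g_c$-comparison for $t\geq\tau$, including the determinant/transversality argument and the compactness lower bound on $\partial_y g_c$, is also in essence the paper's third foliation step and works.

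The gap is in the short-time regime $t\in[0,\tau]$. You claim to close it with ``$n$-independent short-time parabolic regularity on the fixed region $[-k-1,k+1]$'' together with the bound $|Y_n'|\leq 4$, but for the quasilinear operator $\mathcal{V}$ this is circular: the parabolicity coefficient $\mu(y_x)=(1+y_x^2 e^{2\phi})^{-1}$ degenerates as $|\partial_x y_n|\to\infty$, so neither De~Giorgi--Nash--Moser~\ref{DGNM} nor the Schauder estimates~\ref{L Schauder} apply until one already has a gradient bound. Moreover the flux $e^{-\phi}\arctan(y_x e^\phi)$ in the divergence form~(\ref{CSF,2}) is bounded in $y_x$, so there is no structural Serrin-type estimate to fall back on; gradient blow-up is exactly what must be ruled out, not assumed away. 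The existence theory~\ref{loc solv} produces an interval $[0,s]$ on which the $C^1$ norm stays controlled, but $s$ comes from Schauder constants on the full domain $(-n,n)$ and is not uniform in $n$. The paper fills precisely this hole with Lemma~\ref{st grad bd}: it constructs an explicit $n$-independent foliation by vertical translates $\{F(\cdot,t)-h\}_h$ of a single solution $F$ to a Dirichlet problem on $(0,k+1)\times(0,\infty)$ with initial data $4x$, establishes the barrier sandwich $4x\leq F(x,t)\leq 4xe^{A_k t}$, and then uses the intersection principle~\ref{int princ} with Lemma~\ref{fact} to deduce a gradient bound on $[0,k]\times[0,\tau]$ uniformly in $n$. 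Your proposal has no substitute for this construction, and without it the boundary bound on the shell for small $t$ — which your maximum-principle propagation step requires as input — is unproven.
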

%By property (ii) of the sequence $y_n$ mentioned earlier, we only need to show a $C^1$-bound on the region $[0,k]\times[0,T]$. Also, 

Since each $y_n$ is smooth, it suffices to prove theorem \ref{loc grad bd} for $n$ sufficiently large. So from now on, we may assume $n > k+1$.

The strategy we employ to prove theorem \ref{loc grad bd} is as follows:
\begin{itemize}
\item Foliate a region of space-time with curves of controlled gradient.
\item Show that at any time, each solution $y_n$ intersects each foliating curve only once.
\item  Conclude a uniform gradient bound over the foliated region using \ref{fact}. 
\end{itemize}
To begin, we shall show a uniform gradient bound at all times for our solutions over some compact subset of space strictly containing $[-1,1]$.
\begin{lem}[Local gradient bounds on a small spatial neighbourhood]\label{ssn grad bd} There exists $a > 1$ and $M_1 >0$ such that
\begin{equation*}
0 \leq \frac{\partial y_n}{\partial x}(x,t) \leq M_1 \quad \forall x \in [0,a], \ \forall t \in [0,T], \ \forall n \in \mathbb{N}.
\end{equation*}
In particular
\begin{equation*}
\abs{y_n(\cdot,t)}_{C^1([-a,a])} \leq M_1, \quad \forall t \in [0,T], \quad \forall n \in \mathbb{N}.
\end{equation*}
\end{lem}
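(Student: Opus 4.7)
The plan is to combine a moving-barrier argument in the curved region $x \in (1, a]$ using the horizontal-graph CSF solutions $\{g_c\}_{c > 1}$ from section \ref{chapter dir probs}, with the flat-region maximum principle on $[-1, 1]$, where $\phi \equiv 0$ reduces things to the Euclidean graphical CSF.

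For each $c > 1$ and each $n$ with $c < n - 1$, I would compare $y_n(\cdot, t)$ and $g_c(\cdot, t)$ as two CSF solutions in the plane. At $t = 0$, $y_n(\cdot, 0)$ vanishes on $[1 - n, n - 1]$ while $g_c(\cdot, 0) \equiv c$ is a vertical line, so they meet only at the transverse point $(c, 0)$. The boundary behaviour $y_n(\pm n, t) = \pm 1$, $g_c(0, t) = c(t) > 1$, $g_c(1, t) = c$, together with the intersection principle \ref{int princ}, forces exactly one transverse intersection $(x_c^*(t), y_c^*(t))$ for each $t \in (0, T]$. By oddness and monotonicity of $y_n$ and $c(t) > 1$, one has $y_n(c(t), t) > 0 = g_c^{-1}(c(t), t)$ while $y_n(c, t) < 1 = g_c^{-1}(c, t)$, so $y_n - g_c^{-1}$ transitions from positive to negative at the unique crossing, yielding
\[ \frac{\partial y_n}{\partial x}(x_c^*, t) \;\leq\; \Bigl(\frac{\partial g_c}{\partial y}(y_c^*, t)\Bigr)^{-1}. \]
Varying $c$ continuously over $(1, c_0]$ sweeps the crossing across the graph of $y_n(\cdot, t)$, covering every $x \in (1, a]$ for suitable $a, c_0$.

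The main obstacle is to produce a uniform lower bound $\partial g_c / \partial y \geq 1/M_1$ at the crossings, independent of $c \in (1, c_0]$, $t \in (0, T]$ and $n$. I would derive the evolution equation for $w := \partial g_c / \partial y$ along the lines of (\ref{grad evo angular}), invoke Proposition \ref{mono angular} to know $w > 0$, and then apply the strong minimum principle together with a Hopf-type argument; explicit lower bounds on $w$ at the parabolic walls $y \in \{0, 1\}$ should come from sandwiching $g_c$ between the horizontal geodesic $\eta_{m, 0}$ (lower) and the linear barrier $c(t)(1-y) + cy$ (upper) provided by Lemma \ref{C0 angular}, whose gradients are explicit. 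I expect this to be the most technical step.

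For the flat region $x \in [-1, 1]$, with $\phi \equiv 0$ in (\ref{grad evo}), the gradient $v := \partial y_n / \partial x$ (nonnegative by Proposition \ref{mono radial}) satisfies $v_t - \mu(v) v_{xx} \leq 0$, since the $\phi'$ and $\phi''$ terms vanish and the remaining quadratic term has the correct sign. The maximum principle then controls $v$ on $[-1, 1] \times [0, T]$ by its parabolic-boundary values: zero at $t = 0$ for $n \geq 2$, and bounded by $M_1$ at $x = \pm 1$ via a continuity limit of the curved-region bound just obtained. Stitching the two regions yields $v \leq M_1$ on $[-a, a] \times [0, T]$, completing the proof.
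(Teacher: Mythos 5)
Your proposal identifies the right overall machinery (foliate, apply the intersection principle \ref{int princ}, use \ref{fact} to transfer gradient bounds), but the choice of foliating family for this particular lemma is where things break, and in a way that is not repairable by a sharper Hopf/minimum-principle argument.

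The step you yourself flag as ``most technical'' --- extracting a lower bound on $w := \partial g_c/\partial y$ at the crossing points that is uniform in $t \in (0,T]$ --- is in fact impossible, not merely delicate. The solutions $g_c$ start from the \emph{constant} initial data $g_c(\cdot,0) \equiv c$, so $w(\cdot, 0) \equiv 0$ and $w \to 0$ uniformly as $t \searrow 0$. The wall bound you hope to exploit is $\partial g_c/\partial y(1,t) \geq c - c(t)$ (from the upper barrier in Lemma \ref{C0 angular}), and this quantity also collapses to $0$ as $t \searrow 0$; the lower barrier $\eta_{m,0}$ gives only \emph{upper} bounds on $w$ at $y = 1$ and is useless at $y = 0$ where it sits at $x=0$ far below $g_c$. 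No interior estimate can manufacture a $t$-uniform positive lower bound from boundary data that degenerates at $t=0$. Since Lemma \ref{ssn grad bd} must hold for all $t \in [0,T]$ including arbitrarily short times, the $g_c$-foliation cannot do the job here; this is precisely why, in the paper, the $g_c$-foliation is deferred to the final step of Theorem \ref{loc grad bd}, where it is applied only on $[a,k]\times[\tau,T]$ with $\tau > 0$ bounded away from $0$ (via the compactness argument around (\ref{cor grad eq})).

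What the paper does instead for Lemma \ref{ssn grad bd} is foliate with the \emph{static} geodesics $\sigma_{m,h}$ of Section \ref{chapter geodesics}. Being fixed points of the flow, these barriers have a gradient bound $\sup_{[-2,2]} \sigma_{m,0}'$ that is time-independent, which is exactly what is missing from your $g_c$ barriers at short times. The structural condition $\phi'(x) < \tfrac{1}{2}$ on $(1, \tfrac{3}{2})$ in Lemma \ref{exist of phi} is inserted precisely to make these geodesics steep enough to exit the strip $|y| \leq 1$ before $x = 2$, which is what lets the intersection principle pin down a unique crossing against $y_n(\cdot,t)$ over $[-2,2]$. Your flat-region observation for $x \in [-1,1]$ is correct ($\mathcal{V}$ reduces to $v_t \leq \mu(v) v_{xx}$ there), but it does not rescue the argument, because you still need a boundary gradient bound at $x = \pm 1$ valid down to $t = 0$, and that is exactly the bound your curved-region step cannot supply. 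Replace the $g_c$ family with the static geodesics and your scheme becomes the paper's.
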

The last line of this lemma rephrases it as the special case of theorem \ref{loc grad bd} for $k \leq a$. To prove lemma \ref{ssn grad bd}, we shall foliate $[-2,2] \times \mathbb{R}$ with the geodesics mentioned in section \ref{chapter geodesics}.  As such, the proof will require the following property of the geodesics.
\begin{claim}
For each $m \in (0,1)$, let $\sigma_{m,0}$ denote the graphical geodesic constructed in section \ref{chapter geodesics}. For $m$ sufficiently close to $1$
\begin{equation*}
\sigma_{m,0}(2) > \sigma_{m,0}(1)+1>2.
\end{equation*}
\end{claim}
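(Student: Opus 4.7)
The plan is to use the explicit formula
\[
\sigma_{m,0}(x)=\int_0^x \frac{m}{e^{\phi(s)}\sqrt{e^{2\phi(s)}-m^2}}\,ds
\]
from Section \ref{chapter geodesics}, handle the two inequalities separately, and drive $m\to 1^{-}$.

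For the inequality $\sigma_{m,0}(1)+1>2$, I would use that $\phi\equiv 0$ on $[0,1]$ (property (2) of Lemma \ref{exist of phi}), which collapses the integrand over $[0,1]$ to the constant $m/\sqrt{1-m^2}$ and gives
\[
\sigma_{m,0}(1)=\frac{m}{\sqrt{1-m^2}}\xrightarrow{m\to 1^{-}}+\infty.
\]
In particular $\sigma_{m,0}(1)>1$ for $m$ close to $1$, which is the right-hand inequality.

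For $\sigma_{m,0}(2)>\sigma_{m,0}(1)+1$, I would write the difference as
\[
\sigma_{m,0}(2)-\sigma_{m,0}(1)=\int_1^2 \frac{m}{e^{\phi(s)}\sqrt{e^{2\phi(s)}-m^2}}\,ds,
\]
and note that a one-line differentiation in $m$ shows this integrand is monotonically increasing in $m\in(0,1)$ for each fixed $s$. Monotone convergence then gives
\[
\lim_{m\to 1^{-}}\bigl(\sigma_{m,0}(2)-\sigma_{m,0}(1)\bigr)=\int_1^2 \frac{ds}{e^{\phi(s)}\sqrt{e^{2\phi(s)}-1}}.
\]

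The crucial observation — and the one mildly subtle point — is that this limit integral is in fact $+\infty$. Since $\phi\in C^\infty(\mathbb{R})$ vanishes identically on $[0,1]$, Taylor's theorem at $s=1$ forces $\phi(s)=O((s-1)^k)$ as $s\to 1^{+}$ for every $k\in\mathbb{N}$; taking $k=3$ gives $e^{2\phi(s)}-1\le C(s-1)^3$ on some interval $(1,1+\delta)$, so
\[
\frac{1}{\sqrt{e^{2\phi(s)}-1}}\geq \frac{c}{(s-1)^{3/2}}
\]
there, which is not integrable. Hence $\sigma_{m,0}(2)-\sigma_{m,0}(1)\to+\infty$ as $m\to 1^{-}$, so in particular it exceeds $1$ for $m$ close enough to $1$, completing the proof.

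The flatness of $\phi$ at $s=1$ — inherited from condition (2) of Lemma \ref{exist of phi} — is doing all the work: it is what turns the apparent inverse-square-root singularity of the $m=1$ integrand at $s=1$ into a genuine divergence, and allows the gap $\sigma_{m,0}(2)-\sigma_{m,0}(1)$ to grow without bound even though $\sigma_{m,0}(1)$ itself is already blowing up.
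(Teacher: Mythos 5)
Your proposal is correct, but it takes a genuinely different route from the paper. The paper leans on condition~(4) of Lemma~\ref{exist of phi}, the quantitative bound $\phi'<\tfrac12$ on $(1,\tfrac32)$, to get $\phi(1+s)\le\tfrac{s}{2}$ and hence the explicit estimate
\begin{equation*}
\sigma_{m,0}(2)-\sigma_{m,0}(1)\;\ge\;\int_0^{1/2}\frac{m}{e^{s/2}\sqrt{e^s-m^2}}\,ds,
\end{equation*}
whose $m=1$ version is a convergent integral that can be checked (numerically, or by the substitution $u=e^{s/2}$) to exceed~$1$. It then gets $\sigma_{m,0}(1)+1>2$ \emph{for free}: the gradient $\sigma_{m,0}'$ is decreasing, so $\sigma_{m,0}(2)-\sigma_{m,0}(1)\le\sigma_{m,0}'(1)=\sigma_{m,0}(1)$, and the first inequality forces $\sigma_{m,0}(1)>1$. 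You instead compute $\sigma_{m,0}(1)=\tfrac{m}{\sqrt{1-m^2}}\to\infty$ directly, and then show the gap $\sigma_{m,0}(2)-\sigma_{m,0}(1)$ itself tends to $+\infty$: the infinite-order flatness of $\phi$ at $s=1$ (a consequence of $\phi\equiv0$ on $[0,1]$ together with smoothness) makes the $m=1$ integral $\int_1^2 e^{-\phi(s)}(e^{2\phi(s)}-1)^{-1/2}\,ds$ divergent, and monotone convergence does the rest. Your argument is more conceptual, needs no numerical evaluation, proves the stronger statement that the gap is unbounded as $m\to1^-$, and renders condition~(4) of Lemma~\ref{exist of phi} superfluous for this claim; the trade-off is that you use the full $C^\infty$ hypothesis (to kill all Taylor coefficients at $s=1$), whereas the paper's quantitative bound would survive with much less regularity.
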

\begin{proof}[Proof of claim]
Since $\phi'(x)<\frac{1}{2}$ for $x \in (1,\frac{3}{2})$, we note that $\phi(1+s) \leq \frac{s}{2}$ for $s \in (0,\frac{1}{2})$. For each $m \in (0,1)$
\begin{equation*}
\sigma_{m,0}(2) = \sigma_{m,0}(1) + \int\limits_1^2 \frac{m}{e^{\phi(s)}\sqrt{e^{2\phi(s)} - m^2}} \ ds > \sigma_{m,0}(1) + \int\limits_0^{\frac{1}{2}} \frac{m}{e^{\frac{s}{2}}\sqrt{e^s - m^2}} \ ds
\end{equation*}
Since $\int_0^{\frac{1}{2}} \frac{1}{e^{\frac{s}{2}}\sqrt{e^s - 1}} \ ds > 1$, for $m$ sufficiently close to $1$, $\sigma_{m,0}(2) > \sigma_{m,0}(1) + 1$.  We get the last inequality for free as the gradient of $\sigma_{m,0}$ is decreasing.
\end{proof}

\begin{proof}[Proof of lemma \ref{ssn grad bd}]
By the previous claim there exists $m \in (0,1)$ and $\epsilon > 0$ such that $\sigma_{m,0}(2) = \sigma_{m,0}(1) + 1 + 2\epsilon$.  Fix $a>1$ such that $\sigma_{m,0}(a) = \sigma_{m,0}(1) + \epsilon$.  We consider the geodesics $\sigma_{m,h}:\mathbb{R} \rightarrow \mathbb{R}$ for $\abs{h} \leq \sigma_{m,0}(1) + \epsilon$. Since
\begin{align*}
\sigma_{m,h}(2)  &\geq 1 + \epsilon > 1 \geq y_n(2,t),\\
\sigma_{m,h}(-2)  &\leq -1 - \epsilon < -1 \leq y_n(-2,t),
\end{align*}
by intersection principle \ref{int princ}, at each time $t \in [0,T]$ the curves $\sigma_{m,h}(\cdot)$ and $y_n(\cdot,t)$ intersect at a single point over $[-2,2]$. Fix $(x_0,t_0) \in [0,a] \times [0,T]$. Then for $h=-\sigma_{m,0}(1) - \epsilon$ we have $\sigma_{m,h}(x_0) \leq 0$, and for $h=\sigma_{m,0}(1) + \epsilon$ we have $\sigma_{m,h}(x_0) \geq 1$. Therefore, one of the $\sigma_{m,h}$ with $\abs{h} \leq \sigma_{m,0}(1) + \epsilon$ intersects $y_n$ at $(x_0,t_0)$, and we have
\begin{equation*}
0 \leq \frac{\partial y_n}{\partial x}(x_0,t_0) \leq  \sigma_{m,h}'(x_0) \leq \sup_{[-2,2]}  \sigma_{m,0}' \qedhere
\end{equation*}
\end{proof}
Next we shall show a uniform gradient bound over $[-k,k]$ for a short amount of time.
\begin{lem}[Local gradient bounds for a short time]\label{st grad bd}
Fix $k>0$. Then there exists $\tau > 0$ and $M_1(k) >0$ such that
\begin{equation*}
0 \leq \frac{\partial y_n}{\partial x}(x,t) \leq M_1, \quad \forall x \in [0,k], \ \forall t \in [0,\tau], \ \forall n \in \mathbb{N}.
\end{equation*}
In particular
\begin{equation*}
\abs{y_n(\cdot,t)}_{C^1([-k,k])} \leq M_1, \quad \forall t \in [0,\tau], \quad \forall n \in \mathbb{N}.
\end{equation*}
\end{lem}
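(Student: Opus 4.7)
The proof is going to follow the same three-step template used in lemma \ref{ssn grad bd}: foliate a neighbourhood of the target region by curves of controlled gradient, show each foliating curve meets $y_n$ at exactly one transverse point via the intersection principle \ref{int princ}, and deduce the gradient bound from fact \ref{fact}. By the oddness of $y_n(\cdot,t)$ and by lemma \ref{ssn grad bd} (which already handles $[-a,a]$ for all time), it suffices to bound $\partial_x y_n$ at points $(x_0,t_0) \in [a,k] \times [0,\tau]$, for some $\tau = \tau(k) > 0$ to be chosen.

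Static geodesics no longer suffice for this task. Since $\phi(x) = x^2$ when $x \geq 2$, the rise of $\sigma_{m,0}$ beyond $x = k$ is controlled by $\int_k^\infty 1/\sqrt{e^{2\phi} - 1}\,ds$, which is far smaller than $2$ once $k$ is large, so no choice of $m$ produces a geodesic that sandwiches $y_n$ on an interval $[-R,R] \supset [-k,k]$. I therefore intend to foliate with the horizontal graphical solutions $g_c$ from section \ref{chapter dir probs}, inverted to vertical graphs $\tilde g_c(x,t) := g_c(\cdot,t)^{-1}$, which are well-defined on $(c(t), c)$ for $t > 0$ by proposition \ref{mono angular} and solve $\mathcal{V}(\tilde g_c) = 0$; by the avoidance principle the family $\{\tilde g_c\}_c$ is pairwise disjoint. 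For $\tau < 1/k$ chosen small, and each $(x_0, t_0) \in [a,k] \times (0,\tau]$ with $y_0 := y_n(x_0, t_0) \in [0, 1)$, monotonicity in $c$ produces a unique $c = c(x_0, t_0) \in [a,\, k/(1 - k\tau)]$ such that $\tilde g_c(x_0, t_0) = y_0$.

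It is cleaner to run the intersection principle in the horizontal-graph gage: setting $x_n := y_n(\cdot, t)^{-1}$, both $x_n$ and $g_c$ solve $\mathcal{H} = 0$, and on $y \in [0,1]$ we have $x_n(0,t) = 0 < c(t) = g_c(0,t)$ and $x_n(1,t) = n > c = g_c(1,t)$ whenever $n > c$. The intersection principle \ref{int princ} applied on a slab $[t', \tau]$ for small $t' > 0$ (to avoid the degenerate domain $\{c(0) = c\}$), combined with the strict monotonicity of each graph, yields a single transverse intersection, which persists as $t' \downarrow 0$ by continuity. The transversality inequality $\partial_y x_n \geq \partial_y g_c$ at the intersection then translates to the bound
\begin{equation*}
0 \leq \partial_x y_n(x_0, t_0) \leq \frac{1}{\partial_y g_c(y_0, t_0)}.
\end{equation*}

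The main obstacle is to control $1/\partial_y g_c$ uniformly as $(x_0, t_0, c)$ varies over the relevant compact set. My plan is to combine the lower geodesic barrier $\eta_{m,0}$ used in lemma \ref{C0 angular} (which pinches $g_c$ from below and forces $\partial_y g_c$ to remain bounded away from zero at the fixed endpoint $y = 1$) with the linear maximum principle applied to the gradient evolution equation (\ref{grad evo angular}) to propagate a positive lower bound inwards, choosing $\tau = \tau(k)$ small enough that the degenerate behaviour at $t = 0$, where $\partial_y g_c$ necessarily vanishes, is absorbed by the triviality $\partial_x y_n(\cdot, 0) \equiv 0$ on $[-(n-1), n-1] \supset [-k,k]$ (valid as soon as $n > k + 1$).
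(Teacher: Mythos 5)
Your proposal correctly identifies that static geodesics are unusable here and that some dynamic foliation is needed, and the reduction to $[a,k]\times[0,\tau]$ via oddness and lemma \ref{ssn grad bd} is right. But the choice of foliating family is wrong for the short-time regime, and the proposed workaround does not close the resulting gap.

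The obstruction is structural. The curves $g_c(\cdot,0)\equiv c$ are vertical lines, so $\partial_y g_c(\cdot,0)\equiv 0$, and the intersection-principle bound you derive reads $\partial_x y_n(x_0,t_0)\le 1/\partial_y g_c(y_0,t_0)$, whose right-hand side necessarily blows up as $t_0\searrow 0$. Any lower bound on $\partial_y g_c$ obtained by the maximum principle from (\ref{grad evo angular}) must be compatible with this vanishing at $t=0$, hence cannot be uniform over $t\in(0,\tau]$. Your side claim is also reversed: the lower barrier $\eta_{m,0}$ touches $g_c$ from below at $y=1$, which by the one-sided derivative comparison gives the \emph{upper} bound $\partial_y g_c(1,t)\le \eta_{m,0}'(1)$ (this is exactly how the paper uses it in the long-time existence proof), while the \emph{lower} bound at $y=1$ comes from the linear upper barrier $c(t)(1-y)+cy$, namely $\partial_y g_c(1,t)\ge c-c(t)$ — which again degenerates to $0$ as $t\searrow 0$. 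Finally, invoking the triviality $\partial_x y_n(\cdot,0)\equiv 0$ is not a resolution: it controls the gradient at the single time $t=0$, but says nothing uniform in $n$ about $t\in(0,\tau]$, since the rate at which $\partial_x y_n$ grows from zero could a priori worsen with $n$. This is precisely the role this lemma plays in the paper's architecture: the $g_c$-foliation is used in the proof of theorem \ref{loc grad bd} only on the region $[a,k]\times[\tau,T]$, where $\tau>0$ is bounded away from zero and the map $G(y,c,t)=\partial_y g_c(y,t)$ has a positive minimum by continuity on a compact set; lemma \ref{st grad bd} exists to supply the missing short-time strip by a different argument.

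What the paper does instead is foliate by \emph{vertical} graphs whose gradients are uniformly bounded down to $t=0$: it solves the Dirichlet problem $\mathcal{V}(F)=0$ on $(0,k+1)\times(0,\infty)$ with data $F\equiv 4x$ on the parabolic boundary, and uses the translates $F(\cdot,t)-h$. Since $F$ is a single smooth function on $[0,k+1]\times[0,\infty)$ (independent of $n$), $\sup_{[0,k]\times[0,\tau]}\partial_x F<\infty$ by compactness, and the intersection principle then gives $\partial_x y_n\le \partial_x F$ wherever the relevant translate meets $y_n$. The short-time restriction enters only to control which translates can intersect $y_n$ over $[0,k]$ (via the barrier estimate $4x\le F\le 4xe^{A_k t}$), not to regularise a degenerating foliation. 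If you want to salvage your proof, you should abandon the $g_c$ family here and instead build such a family of vertical graphs with initial slope bounded away from $0$ and from $\infty$.
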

The last line of this lemma again rephrases it as the special case of theorem \ref{loc grad bd} for for $T \leq \tau$. To prove lemma \ref{st grad bd} we foliate $[-k,k]\times\mathbb{R}$ for a short amount of time. We will not use geodesics to foliate, as their gradient becomes too shallow far out. Instead we use the same reasoning as in section \ref{chapter2} to construct an immortal, smooth solution to $\mathcal{V}=0$ from a suitable initial condition, and use vertical translations of this solution as a foliation. To be more precise, on the slightly larger domain $(0,k+1)$ consider the Dirichlet problem
\begin{equation*}
\begin{cases}
\mathcal{V}(F) =0 \quad &\text{in} \ (0,k+1) \times (0,\infty) \\
F(x,t) = 4x \quad &\text{on} \ \left([0,k+1] \times \{0\}\right) \cup \left(\{0,k+1\}\times[0,\infty)\right).  \\
\end{cases}
\end{equation*}
By the same reasoning as in section \ref{chapter2}, we have a solution $F:[0,k+1] \times [0,\infty) \rightarrow \mathbb{R}$ such that
\begin{itemize}
\item $\mathcal{V}(F) = 0$ on $(0,k+1) \times (0,\infty)$.
\item $F(x,0) = 4x$, $F(0,t)=0$ and $F(k+1,t) = 4(k+1)$.
\item $F(\cdot,t)$ is increasing for each $t > 0$.
\end{itemize}
By the vertical translation invariance of (\ref{CSF,2}), for each time $t > 0$, we can foliate with the solutions
\begin{equation*}
\mathcal{F}(t) := \{ F(\cdot, t) - h : h \in \mathbb{R} \}.
\end{equation*}
We are concerned with which curves in our foliation $\mathcal{F}(t)$ intersect with our solution $y_n(\cdot,t)$. Define
\begin{equation*}
H(x,t):= \Ima \left( F(\cdot,t) - y_n(\cdot,t) \vert_{[0,x]} \right), \quad \forall x \in [0,k+1],
\end{equation*}
so that a curve $F(\cdot,t) - h \in \mathcal{F}(t)$ intersects $y_n(\cdot,t)$ over $[0,x]$ if and only if $h \in H(x,t)$. The following proposition bounds the size of $H(x,t)$.
\begin{prop} For each $k>0$, there exists a constant $A_k>0$ such that
\begin{equation}\label{H}
H(x,t) \subseteq  \left[0,  4xe^{A_kt}\right], \quad \forall (x,t)\in [0,k+1]\times(0,\infty). 
\end{equation}
\end{prop}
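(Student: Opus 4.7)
The plan is to reduce the bound on $H(x,t)$ to comparing two solutions of $\mathcal{V}=0$ with an explicit supersolution. Define $u(z,t) := F(z,t) - y_n(z,t)$ on $[0,k+1]\times[0,\infty)$ (taking $n$ large enough that $y_n(\cdot,0) \equiv 0$ on $[0,k+1]$). Then $H(x,t)$ is the continuous image of the interval $[0,x]$ under $u(\cdot,t)$, hence an interval containing $u(0,t) = F(0,t) - y_n(0,t) = 0$. Therefore it suffices to prove the two-sided bound $0 \leq u(z,t) \leq 4z\,e^{A_k t}$ on $[0,k+1]\times[0,\infty)$.

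For the lower bound $u \geq 0$, I would apply the comparison principle for the quasilinear operator $\mathcal{V}$ to the two solutions $F$ and $y_n$ on the parabolic cylinder $[0,k+1]\times[0,T]$. Initially $F(\cdot,0) = 4z \geq 0 = y_n(\cdot,0)$; on the lateral sides $F(0,t) = 0 = y_n(0,t)$ and $F(k+1,t) = 4(k+1) \geq 1 \geq y_n(k+1,t)$ by lemma \ref{C0 radial}. Standard comparison gives $F \geq y_n$, so $u \geq 0$.

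For the upper bound, I would take the explicit barrier $W(z,t) := 4z\,e^{A_k t}$ and verify it is a supersolution of $\mathcal{V} = 0$. Since $W_{zz} = 0$, a direct computation gives
\begin{equation*}
\mathcal{V}(W) = 4e^{A_k t}\bigl(A_k z - \phi'(z)(1 + \mu(4e^{A_k t}))\bigr),
\end{equation*}
and since $1 + \mu \leq 2$, the supersolution condition $\mathcal{V}(W) \geq 0$ reduces to $A_k z \geq 2\phi'(z)$ on $[0,k+1]$. By our choice of $\phi$ (lemma \ref{exist of phi}), $\phi'$ vanishes identically on $[0,1]$, so the inequality holds trivially there, while on $[1,k+1]$ the quotient $\phi'(z)/z$ is bounded by a constant depending only on $k$. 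Choosing $A_k := 2\sup_{z \in [1,k+1]} \phi'(z)/z$ therefore makes $W$ a supersolution. On the parabolic boundary of $[0,k+1]\times[0,\infty)$ one has $W = F$ at $t=0$ and at $z=0$, while $W(k+1,t) = 4(k+1)e^{A_k t} \geq 4(k+1) = F(k+1,t)$ for all $t \geq 0$. Quasilinear comparison (comparing $F$ with the strict supersolution $W + \varepsilon t$ and letting $\varepsilon \searrow 0$) then yields $F \leq W$.

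Combining, $u = F - y_n \leq F \leq W = 4z\,e^{A_k t}$ on $[0,k+1] \times [0,\infty)$, so for any $z \in [0,x]$ we have $0 \leq u(z,t) \leq 4z\,e^{A_k t} \leq 4x\,e^{A_k t}$, which gives $H(x,t) \subseteq [0, 4x\,e^{A_k t}]$. The main obstacle is verifying the supersolution inequality, and it is clean precisely because $\phi'$ vanishes near the origin; without that flat region, the fact that $W$ itself vanishes at $z=0$ would force a more delicate Grönwall-type argument to handle the lower-order terms near the origin.
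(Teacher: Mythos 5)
Your proof is correct and follows essentially the same approach as the paper: both verify that $4ze^{A_kt}$ is a supersolution to $\mathcal{V}=0$ using the fact that $\phi'$ vanishes on $[0,1]$ and a suitable choice of $A_k$, then apply the avoidance principle. Your treatment of the lower bound is marginally more direct — you compare $F$ with $y_n$ straightaway to get $F-y_n \geq 0$, whereas the paper first deduces $F-y_n\geq -1$ and then rules out negative values of $h$ via a shifted comparison — but the underlying argument is the same.
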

\begin{proof}
 Fix $A_k>0$ to be determined later. We showed in section \ref{chapter2} that $\mathcal{V}(4x) \leq 0$. By a similar calculation
\begin{equation*}
\mathcal{V}(4xe^{A_kt}) =  4e^{A_kt}\left( A_k x - \phi'(x)(1+\mu(4e^{A_kt})) \right) \geq 4e^{A_kt}\left( A_k x - 2\phi'(x) \right).
\end{equation*}
Note,  for $x \in [0,1]$, $\phi'(x)=0$ so $A_k x - 2\phi'(x) \geq 0$, and for $x\in [1,k+1]$, if we choose $A_k := 2(k+1)^2 = 2\phi'(k+1)$, then $A_k x - 2\phi'(x) \geq A_k - 2\phi'(x) \geq 0$. So $ \mathcal{V}(4x e^{A_kt} ) \geq 0$ over $[0,k+1]$ and by the avoidance principle \ref{avoidance princ}
\begin{equation*}
4x  \leq F(x,t) \leq  4x e^{A_k t}, \quad \forall (x,t)\in [0,k+1]\times(0,\infty).
\end{equation*}
So for any $t \in [0,T]$, over $[0,x]$ we have
\begin{equation*}
- 1 \leq F(\cdot,t) - y_n(\cdot,t) \leq 4xe^{A_k t}
\end{equation*}
To finish the lemma, we note that for $h<0$
\begin{equation*}
F(x,0) - h > 4x \geq  y_n(x,0), \quad F(0,t) - h > 0 = y_n(0,t),\quad F(k+1,t) - h > 1 = y_n(k+1,t). 
\end{equation*}
So by the avoidance principle \ref{avoidance princ}, $h \notin H(x,t)$.
\end{proof}

\begin{proof}[Proof of lemma \ref{st grad bd}]
Choose $\tau := \frac{1}{A_k} \log ( 1 + \frac{1}{4k}) > 0$ so that equation (\ref{H}) becomes
\begin{equation}
H(k,t) \subseteq [0,  4k + 1], \quad \forall t \in [0,\tau].
\end{equation}
Fix $(x_0,t_0) \in [0,k] \times [0,\tau]$. Since $\mathcal{F}(t_0)$ is a foliation, $ \exists \ h_0 \in H(k,t_0)$ such that
\begin{equation*}
F(x_0,t_0) - h_0 = y_n(x_0,t_0).
\end{equation*}
Since $\frac{\partial F}{\partial x}(x,0) = 4 > \frac{\partial y_n}{\partial x}(x,0)$, the curves $F-h_0$ and $y_n$ intersect at a single point at time $t=0$. By equation (\ref{H}), for any $t \in [0,\tau]$
\begin{align*}
F(0,t) - h_0 &= -h_0 \leq 0 = y_n(0,t),\\
F(k+1,t) - h_0 &= 4(k+1) - h_0 \geq 3 > y_n(k+1,t).
\end{align*}
Therefore, at time $t_0$ the curves intersect only at $x_0$ by the intersection principle \ref{int princ}, giving
\begin{equation*}
0 \leq \frac{\partial y_n}{\partial x}(x_0,t_0) \leq  \frac{\partial F}{\partial x} (x_0,t_0) \leq \sup_{[0,k] \times [0,\tau]} \left( \frac{\partial F}{\partial x} \right) \qedhere
\end{equation*}
\end{proof}
We are now ready to prove theorem \ref{loc grad bd}. Here we shall use the family of curves $g_c$ that we constructed in Chapter \ref{chapter2} to foliate our space. Although this foliation doesn't cover all of space-time, the regions it misses are covered by lemmas \ref{ssn grad bd} and \ref{st grad bd}.

\begin{proof}[Proof of theorem \ref{loc grad bd}.]
Take $a>1$ and $\tau>0$ as in lemmas \ref{st grad bd} and \ref{ssn grad bd}. To prove the theorem, it suffices to show that for any $(x^*,t^*) \in [a,k] \times [\tau,T]$, we can find a gradient bound for $y_n$ at $(x^*,t^*)$.\par

We begin by switching gage for our solutions. View the vertical graphs $y_n$ as the horizontal graphs $x_n : [0,1] \times (0,\infty) \rightarrow [0,n]$. By the intermediate value theorem and the monotonicity of $x_n(\cdot,t^*)$, $\exists \ y^* \in (0,1)$ such that $x_n(y^*,t^*) = x^*$. Choose $\tilde{\tau}>0$ sufficiently small so that it is both less than $\tau$ and so that $g_{k+1}(\tilde{\tau}) \geq c_{k+1} (\tilde{\tau}) \geq k$. This implies that the region $\{ (x,y) \in [a,k] \times [0,1] \}$ is folliated by the curves
\begin{equation*}
\mathcal{G} := \{ (g_c(y,\tilde{\tau}),y) : y \in [0,1], \ c \in [a,k+1] \}.
\end{equation*}
In particular, there exists $c^* \in [a,k+1]$ such that
\begin{equation*}
g_{c^*}(y^*,\tilde{\tau}) = x^* = x_n(y^* , t^*).
\end{equation*}
Consider the intersection number of the curves $g_{c^*}(\cdot,t)$ and $x_n(\cdot, t + (t^* - \tilde{\tau}))$. As $y_n(0,\cdot) = 0$ and $y_n(n,\cdot)=1$, we have
\begin{equation*}
 x_n(0,t) = 0, \quad x_n(1,t) = n,  \quad \forall t>0.
\end{equation*}
In particular, as $g_{c^*}(\cdot,0) = c^* \in (0,n)$ and $x_n(\cdot, t^* - \tilde{\tau})$ is strictly increasing, the curves initially intersect once. Moreover, for any $t \geq 0$
\begin{align*}
g_{c^*}(0,t) &= c^*(t) > 0  = x_n(0,t + (t^* - \tilde{\tau})),\\
g_{c^*}(1,t) &= c^* < n  = x_n(1,t + (t^* - \tilde{\tau})),
\end{align*}
and by the intersection principle \ref{int princ}, the curves always intersect only once. Therefore
\begin{equation*}
\frac{\partial x_n}{\partial y} (y^*,t^*) \geq \frac{\partial g_{c^*}}{\partial y} (y^*,\tilde{\tau}).
\end{equation*}
By the smooth dependence on initial conditions for solutions to the Dirichlet problems $H_c(t)$, the map
\begin{equation*}
G:[0,1] \times (1,\infty) \times (0,\infty) \rightarrow (0,\infty), \quad G(y,c,t) :=  \frac{\partial g_c}{\partial y} (y,t),
\end{equation*}
is continuous. Hence for any $X \Subset (1,\infty) \times (0,\infty)$, there exists $\epsilon(X)>0$ such that
\begin{equation}\label{cor grad eq}
G([0,1] \times X) \geq  \epsilon > 0.
\end{equation}
In particular, choosing $X := [a,k+1]\times[\tau,T]$ in equation (\ref{cor grad eq}), there exists $\epsilon>0$ such that
\begin{equation*}
\frac{\partial x_n}{\partial y} (y^*,t^*) \geq \frac{\partial g_{c^*}}{\partial y} (y^*,\tilde{\tau}) = G(y^*,c^*,\tilde{\tau}) \geq \epsilon.
\end{equation*}
That is
\begin{equation*}
0 \leq \frac{\partial y_n}{\partial x} (x^*,t^*) \leq \frac{1}{\epsilon}. \qedhere
\end{equation*}
\end{proof}
\subsection{Higher order bounds}
In order to get higher order bounds on our sequence, we combine the local gradient bounds with interior Schauder estimates.
\begin{thm}[Local bounds]\label{loc ck bd}
Fix $j \in \mathbb{N}$ and $K \Subset \mathbb{R} \times (0,\infty)$. Then there exists $M_j >0$ such that
\begin{equation*}
\abs{y_n}_{P^j(K)} \leq M_j, \quad \forall n \in \mathbb{N}.
\end{equation*}
\end{thm}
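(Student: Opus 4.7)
The plan is to bootstrap from the uniform $C^0$ and spatial $C^1$ bounds of Theorem~\ref{loc grad bd} by successive applications of interior parabolic Hölder and Schauder estimates. Fix $K \Subset \mathbb{R}\times(0,\infty)$ and choose a nested sequence $K = K_j \Subset K_{j-1} \Subset \cdots \Subset K_1 \Subset K_0 \Subset \mathbb{R}\times(0,\infty)$, each bounded away from $\mathbb{R}\times\{0\}$. Applying Theorem~\ref{loc grad bd} on a slight open thickening of $K_0$ yields a constant $M_1 > 0$, independent of $n$, with $\abs{y_n} \le 1$ and $\abs{(y_n)_x} \le M_1$ throughout $K_0$ for every $n \in \mathbb{N}$.

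The crucial step is to upgrade these bounds to a uniform $P^{1,\alpha}$ bound. Writing $\mathcal{V}(y_n)=0$ in the quasi-linear form $(y_n)_t = \mu((y_n)_x)(y_n)_{xx} + \phi'(x)(1+\mu((y_n)_x))(y_n)_x$, the $C^1$ bound forces uniform parabolicity $\mu((y_n)_x) \ge (1 + M_1^2 e^{2\sup_{K_0}\phi})^{-1}$ together with uniform bounds on every derivative of the structural coefficients in their arguments. A standard interior Hölder gradient estimate for quasi-linear uniformly parabolic equations with bounded $C^1$ solutions (for example Ladyzhenskaya--Solonnikov--Ural'tseva, Ch.~V, or Lieberman, Ch.~XII) then produces $\alpha\in(0,1)$ and $C_1 > 0$, independent of $n$, with
\begin{equation*}
\abs{y_n}_{P^{1,\alpha}(K_1)} \le C_1.
\end{equation*}

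With this in hand the rest is a routine Schauder bootstrap. Freezing $a_n := \mu((y_n)_x)$ and $b_n := \phi'(x)(1+\mu((y_n)_x))$, the equation becomes linear and uniformly parabolic with coefficients in a bounded subset of $C^\alpha(K_1)$, so interior parabolic Schauder estimates give $\abs{y_n}_{P^{2,\alpha}(K_2)} \le C_2$ uniformly in $n$. To promote to higher orders, differentiate $\mathcal{V}(y_n)=0$ once in space: the gradient $v_n := (y_n)_x$ satisfies the evolution equation~(\ref{grad evo}), a linear uniformly parabolic equation whose coefficients are smooth functions of $x$, $v_n$, $(v_n)_x$, and therefore lie uniformly in $C^\alpha(K_2)$ by the previous step. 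Schauder applied to this equation promotes the bound to $\abs{y_n}_{P^{3,\alpha}(K_3)} \le C_3$, and iterating $j$ times yields $\abs{y_n}_{P^{j,\alpha}(K_j)} \le C_j$. Since $P^{j,\alpha}(K) \hookrightarrow P^j(K)$ continuously, the theorem follows with $M_j := C_j$.

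The principal difficulty lies entirely in the initial Hölder gradient step: the gain from $C^1$ to $C^{1,\alpha}$ requires a quasi-linear regularity theorem whose constants depend only on the $C^0$ and $C^1$ bounds together with the uniform ellipticity and smoothness of the structural functions. After that, every stage of the bootstrap is a linear Schauder estimate on an equation whose coefficients and ellipticity are already controlled uniformly in $n$, so the iteration proceeds mechanically.
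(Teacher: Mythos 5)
Your proposal is correct and follows the same basic route as the paper: start from the uniform $C^0$ and spatial $C^1$ bounds of Theorem~\ref{loc grad bd}, use them to obtain uniform parabolicity, pass to a Hölder estimate of De Giorgi--Nash--Moser type, and finish with a Schauder bootstrap. The paper's write-up substitutes $y_n$ into the coefficients of $\mathcal{V}$ to obtain linear operators $\mathcal{L}_n$ with uniformly bounded measurable coefficients, applies Theorem~\ref{DGNM} to conclude that the $y_n$ are uniformly bounded in $P^{0,\alpha}(K_\epsilon)$, and then invokes interior Schauder (Theorem~\ref{L Schauder}). Your version is in fact more careful about the pivotal step: to run Schauder one needs the coefficients of $\mathcal{L}_n$, which depend on $(y_n)_x$, to lie in a Hölder class, so the gain that actually matters is $C^1 \to C^{1,\alpha}$ rather than $C^0 \to C^{0,\alpha}$, and a $P^{0,\alpha}$ bound on $y_n$ alone does not directly supply this. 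You obtain it by invoking the interior Hölder gradient estimate for uniformly parabolic quasi-linear equations (equivalently, by applying De Giorgi--Nash--Moser to the divergence-form equation satisfied by $v_n := (y_n)_x$), after which the linear Schauder iteration closes exactly as you describe. One small bookkeeping remark: the gradient Hölder estimate delivers $(y_n)_x$ uniformly in $C^{\alpha,\alpha/2}$, not the full $P^{1,\alpha}$ norm of $y_n$; it is the first Schauder application that then yields $P^{1,\alpha}$, and each subsequent differentiation-plus-Schauder step gains one more parabolic order. This off-by-one shift in your index counting is cosmetic and does not affect the validity of the bootstrap. In short, you take the same approach, but you make explicit the gradient-Hölder step that the paper's proof glosses over.
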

\begin{proof}
There exists some $\epsilon > 0$ such that $K_{2\epsilon} \subseteq \mathbb{R} \times (0,\infty)$, where $K_{2\epsilon}$ denotes the $2 \epsilon$-fattening of $K$. We begin by substituting $y_n$ into the coefficients of $\mathcal{V}$. By theorem \ref{loc grad bd}, our operator $\mathcal{L}_n$ is then a strictly parabolic linear operator. Moreover, on $K_{2\epsilon}$, these coefficients are uniformly bounded in $L^{\infty}(K_{2\epsilon})$. Thus, we can apply De Giorgi-Nash-Moser \ref{DGNM} to conclude that our sequence of solutions $y_n$ are uniformly bounded in $P^{0,\alpha}(K_\epsilon)$ for some $\alpha \in (0,1)$. Using interior Schauder estimates \ref{L Schauder} the result follows.
\end{proof}

As a consequence of theorem \ref{loc ck bd}, we now have local uniform $P^j$-bounds for our sequence $y_n$, and by Arzela-Ascoli, $y_n$ converges to $y$ in $C^\infty_{\text{loc}}(\mathbb{R}\times(0,\infty))$. Hence $y$ is smooth with $\mathcal{V}(y) = \lim_{n\rightarrow \infty} \mathcal{V}(y_n) = 0$ on $\mathbb{R} \times(0,\infty)$.

\section{Long term behaviour}\label{chapter4}
We currently have a continuous function $y: \mathbb{R} \times [0,\infty) \rightarrow [-1,1]$ such that
\begin{enumerate}
\item [(i)] $y(\cdot , 0) \equiv 0$  on $\mathbb{R}$.
\item[(ii)] $y(\cdot,t)$ is an increasing odd function $\forall t \in (0,\infty)$.
\item [(iii)]  $y$ is smooth and satisfies $\mathcal{V}(y)=0$ on $\mathbb{R} \times (0,\infty)$.
\end{enumerate}
The final step in the proof of theorem \ref{main thm} is to show that our solution does not remain equal to zero as we flow forwards in time. To do this we construct a suitable barrier. In particular, we find a graphical solution that acts as a barrier to our sequence of solutions $y_n$.

\subsection{Finding a suitable barrier}

Let $\zeta : (0,\infty) \rightarrow  (1,\infty)$ be the solution to the ODE
\begin{equation}
    \frac{\partial}{\partial t}\zeta(t) = -\phi'(\zeta(t)),
\end{equation}
such that $\zeta(t) \rightarrow \infty$ as $t \searrow 0$. As discussed earlier in \ref{chapter choosing metric}, $\zeta(t) = t^{-1}$ for small $t>0$. Consider the barrier function $b:(0,\infty)\times (0,\infty) \rightarrow (0,\infty)$ given by $b(y,t) := t + \zeta(t) + \frac{1}{\log(1+y)}$. We shall show that as a horizontal graph, this is a supersolution to equation (\ref{CSF,1}). Since $\phi(b(y,t))>0$, we have the upperbound
\begin{equation*}
\frac{b_{yy}}{b_y^2 + e^{2\phi(b)}} = \frac{2\log(1+y) + \log(1+y)^2}{1 + e^{2\phi(b)} (1+y)^2} \leq 1.
\end{equation*}
Moreover, since $\phi'(x)$ is increasing, we have that $\phi'(b(y,t)) + \dot{\zeta}(t) = \phi'(b(y,t)) - \phi'(\zeta(t)) \geq 0$. Using the above inequalities and substituting $b$ into $\mathcal{H}$ gives
\begin{equation*}
\mathcal{H}(b) = 1 + \dot{\zeta}(t) - \frac{b_{yy}}{b_y^2 + e^{2\phi}} + \phi'(b(y,t)) \left( 1 + \frac{b_y^2}{b_y^2 + e^{2\phi}} \right) \geq 0.
\end{equation*}
So $b$ is a supersolution to (\ref{CSF,1}). Switching gage, $(x,t) \mapsto \exp \left( \frac{1}{x - (t+ \zeta(t))}\right) - 1$ is a supersolution to (\ref{CSF,2}) in the region $U:= \{ (x,t) \in (t + \zeta(t),\infty) \times (0,\infty)\}$. In particular,  using $\mathcal{V}(-y) = -\mathcal{V}(y)$ and the vertical translation invariance of $\mathcal{V}$, we have that the graph of the function $u(x,t) := 2-\exp \left( \frac{1}{x - (t+ \zeta(t))} \right)$ is a subsolution to (\ref{CSF,2}) in the region $U$. We can then modify $u$ to get a subsolution $\overline{u}$ (in the viscosity sense) defined on all of $(0,\infty) \times (0,\infty)$ by setting
\begin{equation*}
\overline{u}(x,t) := \begin{cases}
-1 &: (x,t) \notin B\\
\max \{ -1 , u(x,t) \} &: (x,t) \in B
\end{cases}
\end{equation*}
This $\overline{u}$ is our barrier. The following lemma shows that this barrier does indeed push our solution $y$ away from zero.
\begin{lem}
\begin{equation*}
\overline{u}(x,t) \leq y(x,t), \quad \forall (x,t) \in \mathbb{R} \times [0,\infty).
\end{equation*}
\end{lem}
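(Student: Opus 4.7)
The plan is to establish $\overline{u} \le y$ by comparing $\overline{u}$ with each approximating solution $y_n$ on a carefully chosen parabolic region and then passing to the limit $n\to\infty$ via the locally smooth convergence $y_n \to y$ established in section \ref{chapter3}.

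Since every $y_n$ takes values in $[-1,1]$, the limit $y$ satisfies $y \ge -1$ everywhere, so the inequality $\overline{u} \le y$ is automatic on the set $\{\overline{u}=-1\}$. It therefore suffices to prove $u(x_0,t_0) \le y(x_0,t_0)$ at any point with $u(x_0,t_0)>-1$. A direct calculation shows that $u > -1$ is equivalent to $x > x_*(t) := t+\zeta(t)+1/\log 3$. For such $(x_0,t_0)$, fix $n$ large enough that $(x_0,t_0)$ lies in the region
\[ R_n := \{(x,t) : x_*(t) < x < n\}. \]
Because $x_*(t)\to\infty$ as $t\searrow 0$, this region emerges from the single corner $(n,t_n^*)$ defined by $x_*(t_n^*)=n$, and its parabolic boundary consists of the left curve $\{x=x_*(t)\}$ and the right line $\{x=n\}$.

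On the left curve, $u=-1\le y_n$; on the right line, $y_n(n,t)=Y_n(n)=1$, while $n-t-\zeta(t)\ge 1/\log 3 >0$ for $t \ge t_n^*$ gives
\[ u(n,t)=2-\exp\!\left(\frac{1}{n-t-\zeta(t)}\right) < 1 = y_n(n,t). \]
Since $u$ is a smooth subsolution of $\mathcal{V}=0$ on $R_n$ (as established just before the lemma) and $y_n$ is a smooth solution, the avoidance principle for quasilinear parabolic equations gives $u \le y_n$ throughout $R_n$. In particular $u(x_0,t_0) \le y_n(x_0,t_0)$, and letting $n\to\infty$ yields $u(x_0,t_0) \le y(x_0,t_0)$, as required.

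The main technical subtlety is that $R_n$ has a moving curved spatial boundary and no standard initial time slice, so the avoidance principle needs a small justification. I would handle this by exhausting $R_n$ with standard parabolic rectangles $R_n \cap \{t \ge t_n^* + \varepsilon,\ x \ge x_*(t) + \delta\}$, applying the classical subsolution--solution comparison on each, and letting $\varepsilon,\delta \searrow 0$ using continuity of $u$ and $y_n$ up to the left curve (where $u \to -1 \le y_n$). Working with $y_n$ rather than directly with the limit $y$ is crucial here: the identity $y_n(n,\cdot)\equiv 1$ supplies a clean upper-boundary inequality, whereas $y(n,\cdot)$ need not attain the value $1$.
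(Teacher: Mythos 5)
Your argument follows the paper's strategy: compare the barrier against the approximating solutions $y_n$ via a comparison principle on an $n$-dependent region, then pass to $n\to\infty$ using the locally smooth convergence $y_n\to y$. The variation is purely technical. The paper compares the \emph{modified} barrier $\overline{u}=\max\{-1,u\}$ (a viscosity subsolution, being a maximum of two subsolutions) with $y_n$ on the full rectangle $[-n,n]\times[0,\infty)$, where the initial comparison $\overline{u}(\cdot,0)\equiv -1\le y_n(\cdot,0)$ is trivial and the lateral boundaries are vertical lines; the price is that $\overline{u}$ is only Lipschitz. You instead compare the smooth piece $u$ with $y_n$, trading the viscosity issue for a moving curved boundary. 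Your boundary computations on $\{x=x_*(t)\}$ and $\{x=n\}$ are correct, but the proposed justification is not quite right as stated: the sets $R_n\cap\{t\ge t_n^*+\varepsilon,\ x\ge x_*(t)+\delta\}$ are not parabolic rectangles -- their left boundary is still the curve $\{x=x_*(t)+\delta\}$, so you cannot literally ``apply the classical comparison on each.'' What actually closes the gap is a first-touching-time argument on the non-cylindrical domain $R_n$ (the interior maximum of $u-y_n-\varepsilon e^{\lambda t}$ cannot lie on the parabolic boundary, and the left boundary, being all of $\partial R_n\cap\{x<n\}$, belongs to the parabolic boundary regardless of its direction of motion). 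Alternatively, and more simply, one can revert to the paper's route of using $\overline{u}$ on the cylinder $[-n,n]\times[0,\infty)$, which sidesteps the curved boundary entirely at the mild cost of invoking the comparison principle for a Lipschitz viscosity subsolution.
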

\begin{proof}
Fix $n \in \mathbb{N}$.  On the parabolic boundary of the region where $y_n$ is defined
\begin{equation*}
\overline{u}(\cdot,0) = -1 \leq y_n(\cdot,0), \quad \overline{u}(-n,\cdot) = -1 = y_n(0,\cdot), \quad \overline{u}(n,\cdot) < 1 = y_n(n,\cdot).
\end{equation*}
By the maximum principle
\begin{equation*}
\overline{u}(x,t) \leq y_n(x,t), \quad \forall (x,t) \in [-n,n] \times [0,\infty),  \quad \forall n \in \mathbb{N}.
\end{equation*}
The result follows from the convergence of $y_n$ to $y$.
\end{proof}
For any $t>0$ and $\epsilon \in (0,1) $, setting $x_0:= t+\zeta(t) + \frac{1}{\log(1+\epsilon)}$, we have
\begin{equation*}
1-\epsilon \leq \overline{u}(x,t) \leq y(x,t), \quad \forall x > x_0.
\end{equation*}
This concludes the proof of theorem \ref{main thm}.

\section{Uniqueness of radial geodesics}\label{chapter5}
For the final section, we consider metrics of the form $g = dr^2 + e^{2\phi(r)} d\theta^2$ as in (\ref{eqn polar metric}) which are complete smooth $O(2)$-invariant metrics on the plane with non-positive curvature. As any such $g$ is complete, smooth and $O(2)$-invariant, we have the previous analytic definition of $g$ blooming at infinity (see definition \ref{defn bloom}). Moreover, under the additional assumption that the curvature is non-positive, we can show that an equivalent geometric formulation for $g$ blooming at infinity is that all closed solutions to CSF become extinct within a finite uniform time. We shall first make this statement precise, before using it to prove theorem \ref{thm2}.\par

\subsection{Geometric formulation of blooming at infinity}
Given a region $U \subseteq M$ within our surface, we want to quantify the maximal existence time for closed solutions to CSF which initially lie within $U$.
\begin{defn}\label{defn existence time}
For any subset $U \subseteq M$, let $\mathcal{C}(U)$ denote the class of smooth closed solutions to equation (\ref{eqn:tang})
\begin{equation*}
    \eta : S^1 \times [0,T) \rightarrow M,
\end{equation*}
such that $\eta(\cdot,0)\subseteq U$. For such a solution $\eta$, we say that its existence time is $T$. Define the existence time of the subset $U$ to be the supremum of all such existence times
\begin{equation*}
    \tau(U) = \sup \{ T \in (0,\infty) : T \textnormal{ is the existence time for some } \eta \in \mathcal{C}(U) \}.
\end{equation*}
\end{defn}
\begin{lem}[Equivalent formulations for blooming at infinity]\label{bloom defn}
Let $g = dr^2 + e^{2\phi(r)} d\theta^2$ as in (\ref{eqn polar metric}) be a complete smooth $O(2)$-invariant metric on the plane with non-positive curvature. For each $m \in \mathbb{N}$, let $S_m := \{ (r,\theta) : r \geq 0, \ \theta \in [0,\frac{2\pi}{m}] \} \subset \mathbb{R}^2$. The following conditions are equivalent.
\begin{enumerate}
\item $g$ allows blooming at infinity. That is, there exists $T \in (0,\infty)$ and a solution $R: (0,T) \rightarrow (0,\infty)$ to the ODE (\ref{eqn symm circle}) such that $R(t) \rightarrow \infty$ as $t \searrow 0$.

\item For any $t>0$, there exists $R(t) \in (0,\infty)$ such that, for any $m \in \mathbb{N}$ and $\eta \in \mathcal{C}(S_m)$, we have that $\eta(\cdot,t) \subseteq \overline{B_{R(t)}}$.

\item For any $m \in \mathbb{N}$, the existence time $\tau(S_m) < \infty$ (see definition \ref{defn existence time}).
\end{enumerate}
\end{lem}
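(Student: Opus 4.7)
My plan is to prove the three conditions equivalent via the cyclic chain $(1) \Rightarrow (2) \Rightarrow (3) \Rightarrow (1)$. The non-positive curvature hypothesis is used crucially, and in particular forces $\phi'(r) > 0$ for every $r > 0$: if $\phi'(r_0) = 0$ for some $r_0 > 0$, then $\partial B_{r_0}$ would be a closed geodesic bounding the disk $B_{r_0}$, and Gauss--Bonnet would give $2\pi = \int_{B_{r_0}} K \, dA \leq 0$, a contradiction. Combined with $\phi'(r) \sim 1/r$ near $r = 0$ (from smoothness of $g$ at the origin), continuity yields $\phi' > 0$ on all of $(0,\infty)$, so every geodesic circle strictly shrinks under CSF and the ODE $R' = -\phi'(R)$ has a non-degenerate phase space.

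\textbf{(1) $\Rightarrow$ (2).} Let $R^\star : (0, T^\star) \to (0, \infty)$ denote the blooming ODE solution extended forward maximally; by the previous observation $R^\star$ is strictly decreasing, and since $\phi'(r) \to \infty$ as $r \searrow 0$ one gets $T^\star < \infty$ with $R^\star(t) \to 0$ as $t \to T^\star$. For $t \in (0, T^\star)$ and any $\eta \in \mathcal{C}(S_m)$, compactness of $\eta(\cdot,0)$ furnishes $M = M(\eta)$ with $\eta(\cdot,0) \subseteq \overline{B_M}$; pick $s \in (0, T^\star - t)$ small enough that $R^\star(s) > M$, which is possible because $R^\star(0^+) = \infty$. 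The avoidance principle applied to $\eta$ and the shrinking geodesic circle $\tau \mapsto \partial B_{R^\star(\tau+s)}$ then yields
\begin{equation*}
\eta(\cdot, t) \subseteq \overline{B_{R^\star(t+s)}} \subseteq \overline{B_{R^\star(t)}},
\end{equation*}
by monotonicity, so $R(t) := R^\star(t)$ works. For $t \geq T^\star$ the same comparison forces every such $\eta$ to become extinct by $T^\star$ (trapped inside balls of radius $R^\star(t+s) \to 0$), making (2) vacuously true.

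\textbf{(2) $\Rightarrow$ (3) and (3) $\Rightarrow$ (1).} For the first, fix any $t_0 > 0$ with associated $R(t_0)$ from (2); any $\eta \in \mathcal{C}(S_m)$ that survives to time $t_0$ is embedded and bounds a topological disk $\Omega(t)$ of area $A(t)$. The standard CSF area formula combined with Gauss--Bonnet and $K \leq 0$ gives
\begin{equation*}
A'(t) = \int_{\Omega(t)} K \, dA - 2\pi \leq -2\pi,
\end{equation*}
while $\eta(\cdot, t_0) \subseteq \overline{B_{R(t_0)}}$ supplies the uniform bound $A(t_0) \leq \mathrm{Area}(\overline{B_{R(t_0)}})$. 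Hence the lifespan of $\eta$ is at most $t_0 + \mathrm{Area}(\overline{B_{R(t_0)}})/(2\pi)$, so $\tau(S_m) < \infty$. For the second, take $m = 1$, so $S_1 = \mathbb{R}^2$ and every geodesic circle $\partial B_{R_0}$ lies in $\mathcal{C}(S_1)$, contracting to a point in time $T(R_0) = \int_0^{R_0} dr/\phi'(r)$; condition (3) with $m = 1$ bounds this uniformly by $\tau(S_1)$, so $\int_0^\infty dr/\phi'(r) < \infty$. The map $F(R) := \int_R^\infty dr/\phi'(r)$ is then a smooth strictly decreasing bijection $(0,\infty) \to (0, T_\infty)$ with $T_\infty < \infty$, and its inverse $R^\star := F^{-1}$ satisfies $R^{\star\prime}(t) = -\phi'(R^\star(t))$ with $R^\star(t) \to \infty$ as $t \to 0$, exhibiting blooming.

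\textbf{Main obstacle.} The real subtlety is the genuine use of non-positive curvature. It is needed both to force $\phi' > 0$ everywhere (without which $(3) \Rightarrow (1)$ would fail, since a static geodesic circle would persist forever and break the integral characterisation) and to obtain the area-decay $A' \leq -2\pi$ driving $(2) \Rightarrow (3)$. The warped-product structure by itself is insufficient; the curvature sign is essential for the equivalence to hold.
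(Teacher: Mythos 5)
Your proof is correct and follows the same cyclic chain $(1)\Rightarrow(2)\Rightarrow(3)\Rightarrow(1)$ as the paper, but two of the three implications are argued differently, and your route is arguably cleaner. For $(1)\Rightarrow(2)$ your argument is essentially the paper's, only more explicit about extending the blooming solution forward and handling the case $t\geq T^\star$. For $(2)\Rightarrow(3)$ the paper exploits the sector structure: it rotates a curve $\eta\in\mathcal{C}(S_m)$ by $2\pi j/m$, notes that the $m$ disjoint copies all lie in $\overline{B_R}$, and applies Gauss--Bonnet to get the sharper bound $\tau(S_m)\leq 1+|B_R|/(2\pi m)$; you skip the rotation and apply the area-decay inequality $A'\leq -2\pi$ to a single curve, giving a weaker but still finite bound $t_0 + |B_{R(t_0)}|/(2\pi)$, which is all the equivalence requires. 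The most substantial divergence is in $(3)\Rightarrow(1)$: the paper flows the (non-smooth, cornered) boundary of $B_r\cap S_m$, invokes lemma \ref{GB lem} to compare its lifespan with that of the geodesic circle $\partial B_r$, deduces $\tau(\mathbb{R}^2)\leq m\cdot\tau(S_m)<\infty$, and finally produces the blooming solution as a limit of ODE solutions $R_n$ with $R_n(-T_n)=n$. You instead restrict to $m=1$, compute the exact extinction time $T(R_0)=\int_0^{R_0}dr/\phi'(r)$ of geodesic circles, conclude $\int_0^\infty dr/\phi'(r)<\infty$ directly from $\tau(S_1)<\infty$, and hand the inverse of $F(R)=\int_R^\infty dr/\phi'(r)$ over as the blooming solution. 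This is more direct, avoids the corner-regularisation issue in the paper's flow of $\partial(B_r\cap S_m)$, and replaces the compactness-and-limit step with an explicit formula; the price is that your argument does not yield the stronger statement, implicit in the paper's proof, that $\tau(S_m)<\infty$ for a \emph{single} $m$ already forces blooming. Your preliminary Gauss--Bonnet observation that $K\leq 0$ forces $\phi'>0$ on $(0,\infty)$ is correct, is genuinely needed to make $T(R_0)$ well-defined and finite, and is only left implicit in the paper.
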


\begin{proof}
We shall show 1$\implies$2$\implies$3$\implies$1.
\begin{itemize}
    \item [(1$\implies$2)] Let $\eta \in \mathcal{C}(S_m)$. By compactness, for any $\epsilon>0$, there exists some $\delta \in (0,\epsilon)$ such that $\eta(\cdot,\epsilon) \subseteq B_{R(\delta)}$. By the avoidance principle for closed curves, $\eta(\cdot,t) \subseteq B_{R(t - \epsilon + \delta)}$. Letting $\epsilon \searrow 0$ yields the result.
    \item [(2$\implies$3)] Fix $\eta \in \mathcal{C}(S_m)$. Either the existence time of $\eta$ is less than $1$, or by our assumption, there exists some $R>0$ independent of $\eta$ such that $\eta(\cdot,1) \subseteq \overline{B_R}$. Using that the curvature is non-positive and Gauss-Bonnet, we have that $T \leq 1 + \frac{| B_{R}|}{2\pi m}$ and hence $\tau(S_m) \leq 1 + \frac{| B_{R}|}{2\pi m} < \infty$.
    \item [(3$\implies$1)] Fix $r>0$ and consider the region $B_{r} \cap S_m$. We can flow the boundary of this region under CSF to get a solution $\eta : S^1 \times [0,T) \rightarrow S_m$ with existence time $T \leq \tau(S_m)$. Also consider the maximal solution $r(t):(0,T_0) \rightarrow (0,\infty)$ to the ODE (\ref{eqn symm circle}), starting from $r(0)=r$. We note that under the usual $O(2)$-action on the plane, the rotated curves $(\frac{2\pi j}{m} \cdot \eta)$ for $j \in \{0,1,\ldots,m-1\}$ are disjoint, and completely fill the region $B_{r}$. Therefore by lemma \ref{GB lem}
    \begin{equation*}
        T_0 \leq m \cdot T \leq m \cdot \tau(S_m).
    \end{equation*}
    In particular, taking $r \nearrow \infty$ gives     $\tau(\mathbb{R}^2) \leq m \cdot \tau(S_m) < \infty$. Now consider the sequence of maximal solutions $R_n : [-T_n,0) \rightarrow (0,\infty)$ to the ODE (\ref{eqn symm circle}) with $R_n(-T_n) = n$, for all $n \in \mathbb{N}$. Note that the $T_n$ are strictly increasing and bounded above by $\tau(\mathbb{R}^2)$, so they converge to some finite limit $T$. Taking the limit of the sequence $R_n$ in $n$ and reparameterising gives a solution $R : (0,T) \rightarrow (0,\infty)$ to the ODE (\ref{eqn symm circle}), with $R(t) \rightarrow \infty$ as $t \searrow 0$. \qedhere
\end{itemize}
\end{proof}
\subsection{Proof of theorem \ref{thm2}}

\uniq*
\begin{proof}[Proof of theorem \ref{thm2}]
Let $\gamma : \mathbb{R} \times [0,T] \rightarrow \mathbb{R}^2$ be any uniformly proper solution to CSF starting from the $x$-axis. Fix $m \in \mathbb{N}$ and $r>0$. Using lemma \ref{bloom defn} there exists $\eta_r \in  \mathcal{C}(S_m)$ with existence time greater than $T$ and a point $x \in S^1$ such that $\eta_r(x,T)$ lies outside the ball $B_{r}$ centred at the origin radius $r$. 
Consider now the rotated slice $ (\pi - \frac{2\pi}{m})\cdot{S_m} = \{ (r,\theta) : r \geq 0, \ \theta \in [\pi - \frac{2\pi}{m}, \pi] \}$, and the convex region $\Omega_m := S_m \cup (\pi - \frac{2\pi}{m})\cdot{S_m} \cup B_{\frac{1}{m}}$. We currently have a smooth closed curve $\eta_r(\cdot,0) \subseteq S_m$. We choose $\widehat{\eta}_r \in \mathcal{C}(\Omega_m)$ such that $\widehat{\eta}_r(\cdot,0)$ is a smooth closed curve in $\Omega_m$ enclosing both $\eta_r(\cdot,0)$ and its rotated image $(\pi - \frac{2\pi}{m})\cdot \eta_r(\cdot,0) \subseteq (\pi - \frac{2\pi}{m})\cdot{S_m}$. By the avoidance principle for closed curves, the existence time $\widehat{T}$ of $\widehat{\eta}_r$ is greater than $T$ and there exists points $x,y \in S^1$ such that both $\widehat{\eta}_r(x,T)$ and $\widehat{\eta}_r(y,T)$ lie outside of $B_r$, but with $\widehat{\eta}_r(x,T) \in S_m$ and $\widehat{\eta}_r(y,T) \in (\pi - \frac{2\pi}{m})\cdot{S_m}$.\par
For each $r > 0$, we  now apply the avoidance principle \ref{avoid} to the closed curve $\widehat{\eta}_r$ and the uniformly proper solution $\gamma$, as well as to the rotated closed curve $\pi \cdot \widehat{\eta}_r \in \mathcal{C}(\pi \cdot \Omega_m)$ and $\gamma$ to deduce that
\begin{equation*}
   \Ima{\gamma(\cdot,t)} \subseteq \Omega_m \cup \pi \cdot \Omega_m, \quad \forall m \in \mathbb{N}, \ \forall t \in [0,T].
\end{equation*}
Taking $m \rightarrow \infty$ gives
\begin{equation*}
  \Ima{\gamma(\cdot,t)} \subseteq  \bigcap_{m \in \mathbb{N}} \left(\Omega_m \cup \pi \cdot \Omega_m \right) = \{ (r,\theta) : r \geq 0, \theta \in \{0,\pi\} \}, \quad \forall t \in [0,T].
\end{equation*}
We have shown that $\Ima{\gamma(\cdot,t)}$ is the $x$-axis for each $t \in [0,T]$.
\end{proof}
\begin{appendix}
\section{Linear operators}
For $\Omega := (a,b) \Subset \mathbb{R}$, recall the notation $\Omega_T:= \Omega \times (0,T)$, $\Gamma_{T}:= \left(\Omega \times \{0\}\right) \cup \left(\partial \Omega \times [0,T) \right)$. Consider the linear operator
\begin{equation}\label{linear parabolic operator}
\mathcal{L}(u) := u_t - A(x,t)u_{xx} + B(x,t)u_x + C(x,t)u,
\end{equation}
where $A,B,C$ are bounded functions on $\Omega_T$, with $A(x,t) > 0$, and $C \geq -C_0$. The following maximum principle is taken from \cite[][Chaper II, Theorem 2.1]{ladyzhenskaia1988linear}
\begin{thm}[Maximum principle \cite{ladyzhenskaia1988linear}]\label{linear max princ}
Fix $\alpha \in (0,1]$ and $(x^*,t^*) \in \Omega_T$. If $u \in P^{2,\alpha}(\Omega_T)$ satisfies $\mathcal{L}u \leq 0$ on $\Omega_T$, then
\begin{equation*}
u(x^*,t^*) \leq \max \{ 0, \sup_{\Gamma_{t^*}}(u e^{C_0(t^*-t)} ) \}.
\end{equation*}
Alternatively if $u \in P^{2,\alpha}(\Omega_T)$ satisfies $\mathcal{L}(u) \leq 0$ on $\Omega_T$, then
\begin{equation*}
u(x^*,t^*) \geq \min \{ 0, \inf_{\Gamma_{t^*}}(u e^{C_0(t^*-t)} ) \}.
\end{equation*}
\end{thm}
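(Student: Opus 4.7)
The plan is to reduce to the classical weak parabolic maximum principle for operators with a non-negative zeroth-order coefficient via an exponential rescaling in time, and then run the textbook interior-maximum argument.

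First I would set $v(x,t) := u(x,t)\, e^{-C_0 t}$. A direct computation gives
\begin{equation*}
\tilde{\mathcal{L}}(v) := v_t - A(x,t)\, v_{xx} + B(x,t)\, v_x + \bigl(C(x,t) + C_0\bigr) v = e^{-C_0 t}\, \mathcal{L}(u),
\end{equation*}
so $\tilde{\mathcal{L}}$ is a linear parabolic operator of the same form whose zeroth-order coefficient $\tilde{C} := C + C_0$ is non-negative by hypothesis, and $\tilde{\mathcal{L}}(v) \leq 0$ on $\Omega_T$. Establishing $v(x^*, t^*) \leq \max\{0, \sup_{\Gamma_{t^*}} v\}$ and multiplying through by $e^{C_0 t^*}$ then recovers the stated bound, since $e^{C_0 t^*} \cdot \sup_{\Gamma_{t^*}}\bigl(u e^{-C_0 t}\bigr) = \sup_{\Gamma_{t^*}}\bigl(u e^{C_0(t^* - t)}\bigr)$.

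Next I would carry out the weak maximum principle for $\tilde{\mathcal{L}}$ in two stages. For the strict case, if $w \in P^{2,\alpha}(\Omega_{t^*})$ satisfies $\tilde{\mathcal{L}}(w) < 0$, then $w$ cannot attain a positive maximum at any point $(x_0, t_0) \in \Omega \times (0, t^*]$: at such a point $w_t(x_0, t_0) \geq 0$ (using only a left time-derivative when $t_0 = t^*$), $w_x(x_0, t_0) = 0$, $w_{xx}(x_0, t_0) \leq 0$, and $\tilde{C}(x_0, t_0) w(x_0, t_0) \geq 0$, so combining these with $A > 0$ gives $\tilde{\mathcal{L}}(w)(x_0, t_0) \geq 0$, contradicting strict inequality. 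Hence the positive maximum of $w$ over $\overline{\Omega} \times [0, t^*]$ lies on $\Gamma_{t^*}$. To pass from strict to non-strict, I would perturb by $v_\epsilon := v - \epsilon t$ for $\epsilon > 0$, noting
\begin{equation*}
\tilde{\mathcal{L}}(v_\epsilon) = \tilde{\mathcal{L}}(v) - \epsilon - \epsilon\, \tilde{C}(x,t)\, t \leq -\epsilon < 0
\end{equation*}
since $\tilde{C}, t \geq 0$. Applying the strict version to each $v_\epsilon$ and letting $\epsilon \searrow 0$ yields $v(x^*, t^*) \leq \max\{0, \sup_{\Gamma_{t^*}} v\}$. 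The second assertion (which is presumably meant with $\mathcal{L}(u) \geq 0$) then follows at once by applying the first to $-u$.

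The main obstacle is essentially bookkeeping: one must use the $P^{2,\alpha}$ regularity to legitimise pointwise evaluation of $u_t$ and $u_{xx}$ on $\Omega_T$ and of the pointwise supremum on $\Gamma_{t^*}$, and be careful that the interior-maximum test still applies at $t_0 = t^*$ via a one-sided time derivative. The boundedness of $A, B, C$ and uniform parabolicity $A > 0$ are exactly what make both the exponential substitution and the linear perturbation well-defined, and once these hypotheses are in place the argument is entirely classical.
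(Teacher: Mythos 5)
Your proof is correct, and it is the classical textbook argument for the weak parabolic maximum principle: the substitution $v = u\,e^{-C_0 t}$ absorbs the lower bound on $C$ into a non-negative zeroth-order coefficient $\tilde{C} = C + C_0 \geq 0$; the strict case is handled by the standard interior/final-time contradiction (using a one-sided derivative in $t$ at $t_0 = t^*$, $w_x = 0$, $w_{xx} \leq 0$, $A > 0$, and $\tilde{C}w \geq 0$ at a positive maximum); and the non-strict case follows from the perturbation $v_\epsilon = v - \epsilon t$, which is valid precisely because $\tilde{C}\,t \geq 0$. Undoing the rescaling gives the stated bound, and the second assertion follows by replacing $u$ with $-u$.

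For context, the paper does not prove this statement at all — it is taken verbatim as Theorem 2.1 of Chapter II in \cite{ladyzhenskaia1988linear} — so there is nothing in the paper to compare against; you have supplied the argument that the citation stands in for. You also correctly caught the typo in the second assertion: the hypothesis there should of course read $\mathcal{L}(u) \geq 0$ rather than $\mathcal{L}(u) \leq 0$, which is exactly what makes the reduction to the first assertion via $-u$ go through.
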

The following regularity theorem is taken from \cite[][Chapter III, Theorem 10.1]{ladyzhenskaia1988linear}
\begin{thm}[De Giorgi-Nash-Moser \cite{ladyzhenskaia1988linear}]\label{DGNM}
Let $u \in P^2(\Omega_T)$ be a solution of $\mathcal{L}u = 0$ on $\Omega_T$ such that the coefficients of $\mathcal{L}$ satisfy 
\begin{equation*}
    \norm{A}_{L^\infty(\Omega_T)}, \norm{A^{-1}}_{L^\infty(\Omega_T)}, \norm{B}_{L^\infty(\Omega_T)}, \norm{C}_{L^\infty(\Omega_T)} \lesssim 1.
\end{equation*}
Fix $K \Subset \Omega_T$. Then there exists
\begin{equation*}
    \alpha( \norm{A}_{L^\infty(\Omega_T)}, \norm{A^{-1}}_{L^\infty(\Omega_T)}) \in (0,1),
\end{equation*}
and a constant 
\begin{equation*}
    C(K,\Omega_T,\norm{u}_{L^\infty(\Omega_T)}, \norm{A}_{L^\infty(\Omega_T)}, \norm{A^{-1}}_{L^\infty(\Omega_T)},\norm{B}_{L^\infty(\Omega_T)}, \norm{C}_{L^\infty(\Omega_T)}) > 0,
\end{equation*}
such that
\begin{equation*}
    \norm{u}_{P^{0,\alpha}(K)} \leq C.
\end{equation*}
\end{thm}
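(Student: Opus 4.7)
The plan is to prove this classical interior Hölder estimate via the standard De Giorgi--Nash--Moser iteration scheme adapted to the parabolic non-divergence form operator $\mathcal{L}$ in one space dimension. First I would perform two routine reductions. Replacing $u$ by $v(x,t) := e^{-\lambda t} u(x,t)$ with $\lambda$ chosen to absorb the zeroth-order coefficient allows me to assume $C \geq 0$, and then Theorem \ref{linear max princ} gives a uniform $L^\infty(\Omega_T)$ bound on $v$ in terms of the hypothesised data so that the proof reduces to an interior estimate. Then I would fix a compact set $K \Subset \Omega_T$, choose $r_0 > 0$ smaller than the parabolic distance from $K$ to the parabolic boundary of $\Omega_T$, and reduce everything to producing an oscillation decay on parabolic cylinders $Q_r(x_0,t_0) := (x_0-r,x_0+r) \times (t_0 - r^2, t_0]$ with $r \leq r_0$.

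The analytical heart of the argument is the quantitative oscillation decay: there exist $\theta \in (0,1)$ and $\beta > 0$, depending only on $\|A\|_\infty$ and $\|A^{-1}\|_\infty$, such that for every such cylinder $Q_r \subset \Omega_T$,
\begin{equation*}
    \operatorname{osc}_{Q_{r/2}} u \,\leq\, \theta \cdot \operatorname{osc}_{Q_r} u \,+\, C_1 r^{\beta},
\end{equation*}
where $C_1$ depends on the full list of norms in the theorem. To obtain this I would apply the parabolic Krylov--Safonov approach: for the non-negative shifted function $w := u - \inf_{Q_r} u$, use the parabolic ABP/Aleksandrov estimate together with a measure-theoretic growth lemma to show that either $w$ is bounded below on a definite fraction of $Q_{r/2}$ by a definite fraction of $\operatorname{osc}_{Q_r} u$ (forcing the oscillation to strictly decrease as we pass to a smaller cylinder), or the supremum itself has already decreased by a definite factor. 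The one-dimensional geometry simplifies the ABP step dramatically, reducing it to a comparison with quadratic barriers. The lower-order terms $B u_x$ and $C u$ contribute only the additive $C_1 r^{\beta}$ tail via an elementary scaling perturbation argument, which is crucially why they do not enter the exponent $\alpha$.

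Given the oscillation decay, setting $r_k := 2^{-k} r_0$ and iterating $\omega(r_{k+1}) \leq \theta \omega(r_k) + C_1 r_k^{\beta}$ yields $\omega(r) \leq C_2 r^{\alpha}$ on every cylinder with $r \leq r_0$, for some $\alpha = \alpha(\|A\|_\infty, \|A^{-1}\|_\infty) \in (0,1)$ determined by $\theta$. Covering $K$ by finitely many such cylinders and combining the resulting parabolic Hölder seminorm bound with the $L^\infty$ bound from the first step produces the required $P^{0,\alpha}(K)$ estimate with constant $C$ of the stated dependence. The main obstacle, and the step I expect to be most technical, is the oscillation decay itself: with merely $L^\infty$ principal coefficient $A$ and a non-divergence equation, no classical energy/Caccioppoli identity is available, and one genuinely needs the parabolic ABP together with a careful growth lemma to extract a quantitative decay whose constant $\theta$ (and hence the exponent $\alpha$) depends only on the ellipticity and not on $\|B\|_\infty$ or $\|C\|_\infty$. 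Since the final theorem is quoted directly from Ladyzhenskaya's monograph, I would in practice refer the reader there for the detailed execution of this step.
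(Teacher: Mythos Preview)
The paper does not prove this theorem at all: it is stated as a black-box result quoted from Ladyzhenskaya's monograph (Chapter~III, Theorem~10.1), with no argument given. Your sketch of the Krylov--Safonov oscillation-decay approach is a reasonable outline of the standard proof of this classical interior H\"older estimate, and you correctly anticipate in your final sentence that one would simply cite the reference; that is exactly what the paper does, so there is nothing further to compare.
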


Given $u_0 \in C^{2,\alpha}(\Omega)$, $\psi \in P^{2,\alpha}(\Omega_T)$ and $f \in P^{0,\alpha}(\Omega_T)$, consider the Dirichlet problem
\begin{equation}\label{lin dir}
\begin{cases}
\mathcal{L}(u) = f \quad &\text{in} \ \Omega_{T}\\
u = u_0 \quad &\text{on} \ \overline{\Omega} \times \{ 0 \}\\
u = \psi \quad &\text{on} \ \{ a,b\} \times [0,T]
\end{cases}
\end{equation}
The following existence theorem is taken from \cite[][Chaper IV, Theorem 5.2]{ladyzhenskaia1988linear}.
\begin{thm}[Global Schauder estimate \cite{ladyzhenskaia1988linear}]\label{G Schauder}
Fix $\alpha \in (0,1)$ and $k \in \mathbb{N}_0$. Suppose $A,B,C, f \in P^{k,\alpha}(\Omega_T)$, $\psi \in P^{2+k , \alpha}(\Omega_T)$, and $u_0 \in C^{2+k,\alpha}(\Omega)$, with the auxiliary data satisfying the compatibility conditions of orders $0, 1, \ldots ,k$. Furthermore, assume $\exists \lambda > 0$ such that $A(x,t)\geq \lambda$ on $\Omega_T$, that is, $\mathcal{L}$ is uniformly parabolic. Then there exists a unique $u \in P^{2+k , \alpha}(\Omega_T)$ solving (\ref{lin dir}). Furthermore, there exists a constant $ C(\Omega,\lambda,k,\alpha)>0$ such that
\begin{equation*}
\abs{u}_{P^{2+k,\alpha}(\Omega_T)} \leq C \left( \abs{u_0}_{C^{2+k,\alpha}(\Omega)} + \abs{\psi}_{P^{k+2,\alpha}(\Omega_{T})} + \abs{f}_{P^{k,\alpha}(\Omega_{T})}  \right).
\end{equation*}
\end{thm}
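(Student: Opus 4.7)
The plan is to establish the a priori estimate first and then deduce existence via the method of continuity. To avoid carrying the inhomogeneous boundary and initial data throughout, first I would subtract off a suitable extension: choose $\Psi \in P^{2+k,\alpha}(\Omega_T)$ that agrees with $u_0$ on $\Omega \times \{0\}$ and with $\psi$ on $\partial \Omega \times [0,T]$, with $|\Psi|_{P^{2+k,\alpha}(\Omega_T)} \lesssim |u_0|_{C^{2+k,\alpha}(\Omega)} + |\psi|_{P^{2+k,\alpha}(\Omega_T)}$. Such a $\Psi$ exists precisely because the data satisfy the compatibility conditions of orders $0,1,\ldots,k$ at the parabolic corners $\{a,b\} \times \{0\}$ (otherwise one cannot match the derivatives $\partial_t^j \Psi$ and $A \partial_x^2 \Psi - B\partial_x\Psi - C\Psi + f$ up to order $k$). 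Setting $v := u - \Psi$ reduces the problem to $\mathcal{L}v = \tilde f$ with zero initial and zero lateral boundary data, where $\tilde f \in P^{k,\alpha}$ has the desired norm bound.

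The heart of the matter is the a priori estimate, which I would build from interior and boundary pieces. For the interior estimate, I would freeze the coefficients of $\mathcal{L}$ at a reference point $(x_0,t_0) \in \Omega_T$, writing $\mathcal{L} = \mathcal{L}_{(x_0,t_0)} + (\mathcal{L} - \mathcal{L}_{(x_0,t_0)})$, and exploit the explicit constant-coefficient Schauder bound for $\mathcal{L}_{(x_0,t_0)} = \partial_t - A(x_0,t_0)\partial_x^2 + \cdots$ on small parabolic cylinders. The perturbation term is controlled in $P^{0,\alpha}$ by the $P^{k,\alpha}$-norm of the coefficients times the $P^{2+k,\alpha}$-seminorm of $v$ on a cylinder of radius $\rho$, and this extra factor carries a small power of $\rho$, so for $\rho$ small one absorbs it; a standard covering/iteration argument (or a Campanato-type characterisation of Hölder spaces) then yields interior $P^{2+k,\alpha}$ control. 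For the boundary and initial estimates, the analogous argument is carried out on parabolic half-cylinders after flattening $\partial\Omega$, which in our one-dimensional setting is trivial; near the corners $\{a,b\}\times \{0\}$, the compatibility conditions are exactly what is needed to make the boundary Schauder estimate close.

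Combining interior and boundary estimates via a finite partition of unity, and patching with the elementary maximum-principle bound on $\|v\|_{L^\infty}$ (via Theorem \ref{linear max princ}, using $C \geq -C_0$), yields
\begin{equation*}
|v|_{P^{2+k,\alpha}(\Omega_T)} \leq C\bigl(|\tilde f|_{P^{k,\alpha}(\Omega_T)} + |v|_{L^\infty(\Omega_T)}\bigr) \leq C'\, |\tilde f|_{P^{k,\alpha}(\Omega_T)},
\end{equation*}
with $C'$ depending on $\Omega$, $\lambda$, $k$, $\alpha$ and the $P^{k,\alpha}$-norms of $A,B,C$. Unwinding $v = u-\Psi$ gives the stated global estimate, from which uniqueness is immediate.

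For existence, I would deploy the method of continuity along the family $\mathcal{L}_s := (1-s)(\partial_t - \lambda \partial_x^2) + s\mathcal{L}$, $s \in [0,1]$. Each $\mathcal{L}_s$ is uniformly parabolic with constant bounded below by $\min(\lambda, (1-s)\lambda + s\inf A) > 0$ and coefficients in $P^{k,\alpha}$, so the a priori estimate holds uniformly in $s$. The set $S \subseteq [0,1]$ of those $s$ for which $\mathcal{L}_s : \{v \in P^{2+k,\alpha}(\Omega_T) : v|_{\Gamma_T} = 0\} \to P^{k,\alpha}(\Omega_T)$ is surjective is then closed (by the uniform estimate) and open (by a Neumann series perturbation argument, again using the uniform estimate). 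Since $0 \in S$ is classical for the heat operator, $1 \in S$, giving existence for $\mathcal{L}$. The main obstacle will be the bookkeeping at the parabolic corners: the compatibility conditions of orders $0,\ldots,k$ must be propagated correctly through the reduction to zero data and along the continuity path, and this is what makes the global, as opposed to interior, statement genuinely delicate.
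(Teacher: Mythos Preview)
The paper does not actually prove this theorem: it is stated in the appendix as a quoted result from Ladyzhenskaia--Solonnikov--Ural'ceva (Chapter IV, Theorem 5.2), with no accompanying argument. So there is no ``paper's own proof'' to compare your proposal against.

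That said, your sketch is the standard route and is essentially correct: reduce to zero data via an extension (using the compatibility conditions to make the extension lie in $P^{2+k,\alpha}$), prove the a priori estimate by freezing coefficients and patching interior and boundary half-cylinder estimates, absorb lower-order terms using the maximum principle, and then run the method of continuity from the heat operator. This is precisely the architecture of the proof in the cited reference. One small point: your a priori constant depends on the $P^{k,\alpha}$-norms of $A,B,C$, which is unavoidable in the Schauder framework; the statement as written in the paper lists only $C(\Omega,\lambda,k,\alpha)$, which should be read as implicitly including those coefficient norms.
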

The following regularity theorem is taken from \cite[][Chapter IV, Theorem 10.1]{ladyzhenskaia1988linear}.
\begin{thm}[Interior Schauder estimate \cite{ladyzhenskaia1988linear}]\label{L Schauder}
Suppose that the conditions from Theorem \ref{G Schauder} hold with $u \in P^{2+k , \alpha}(\Omega_T)$ solving (\ref{lin dir}). Suppose $K \Subset \Omega_T$. Then there exists a constant $ C(\Omega,\lambda,k,\alpha,K)>0$ such that
\begin{equation*}
\abs{u}_{P^{2+k,\alpha}(K)} \leq C \left( \abs{u}_{P^{k,\alpha}(\Omega_{T})} + \abs{f}_{P^{k,\alpha}(\Omega_{T})}  \right).
\end{equation*}
\end{thm}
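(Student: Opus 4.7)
Since this is the classical interior Schauder estimate for linear parabolic equations, my plan is to reduce it to the global Schauder estimate (Theorem \ref{G Schauder}) by a localization/cut-off argument, together with a standard interpolation absorption. Concretely, I would fix $K \Subset \Omega_T$ and choose an intermediate relatively compact set $K \Subset K' \Subset \Omega_T$ with positive parabolic distance $d := \mathrm{dist}_p(K,\partial_p K') > 0$ and $d' := \mathrm{dist}_p(K',\partial_p \Omega_T)>0$. Pick a smooth cut-off $\eta \in C^\infty(\overline{\Omega_T})$ with $\eta \equiv 1$ on $K$, $\supp \eta \subseteq K'$, and $\eta \equiv 0$ near the parabolic boundary of a slightly larger parabolic rectangle $R$ with $K' \Subset R \Subset \Omega_T$. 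Set $v := \eta u$ on $R$.

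The key computation is that $v$ solves a Dirichlet problem of the form
\begin{equation*}
\mathcal{L}v = \eta f + [\mathcal{L},\eta] u \quad \text{in } R, \qquad v = 0 \text{ on } \Gamma_{R},
\end{equation*}
where the commutator $[\mathcal{L},\eta]u = -A(\eta_{xx} u + 2\eta_x u_x) + B\eta_x u + \eta_t u$ involves only $u$ and $u_x$, with coefficients depending on derivatives of $\eta$ (hence on $d,d'$) and on the coefficients of $\mathcal{L}$. Applying the global Schauder estimate (Theorem \ref{G Schauder}) to $v$, together with zero auxiliary data, gives
\begin{equation*}
\abs{u}_{P^{2+k,\alpha}(K)} \leq \abs{v}_{P^{2+k,\alpha}(R)} \leq C_1 \Bigl( \abs{\eta f}_{P^{k,\alpha}(R)} + \abs{[\mathcal{L},\eta]u}_{P^{k,\alpha}(R)} \Bigr),
\end{equation*}
where $C_1$ depends on $\Omega,\lambda,k,\alpha$ and the coefficient norms. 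Because derivatives of $\eta$ lie in $P^{k,\alpha}(R)$ with bounds controlled by $K$, the commutator term is estimated by $C_2 \abs{u}_{P^{k+1,\alpha}(K')}$.

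The main obstacle is eliminating the intermediate norm $\abs{u}_{P^{k+1,\alpha}(K')}$ on the right-hand side, as it involves one more derivative than the target lower-order norm $\abs{u}_{P^{k,\alpha}(\Omega_T)}$ appearing in the statement. This is handled by the standard Hölder interpolation inequality: for any $\varepsilon > 0$ there exists $C_\varepsilon$ with
\begin{equation*}
\abs{u}_{P^{k+1,\alpha}(K')} \leq \varepsilon\, \abs{u}_{P^{k+2,\alpha}(K'')} + C_\varepsilon\, \abs{u}_{P^{k,\alpha}(\Omega_T)},
\end{equation*}
on an intermediate set $K' \Subset K'' \Subset \Omega_T$. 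To absorb the $\varepsilon$-term on the left, one runs the cut-off argument not once but along a nested family $K = K^{(0)} \Subset K^{(1)} \Subset \cdots \Subset K^{(N)} \Subset \Omega_T$ with comparable parabolic gaps, producing a recursive inequality of Campanato/iteration type, and then chooses $\varepsilon$ small enough so that after summation the higher-order term is absorbed.

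Finally, combining the bound on the commutator with the interpolation and absorption yields a constant $C(\Omega,\lambda,k,\alpha,K) > 0$ such that
\begin{equation*}
\abs{u}_{P^{2+k,\alpha}(K)} \leq C \left( \abs{u}_{P^{k,\alpha}(\Omega_T)} + \abs{f}_{P^{k,\alpha}(\Omega_T)} \right),
\end{equation*}
as required. I expect the technical heart to be the iteration/absorption step, since one must verify that the constants from the cut-offs, the commutator, and the global Schauder estimate can be tracked through a nested scheme without blowing up; everything else is either a direct computation or a direct citation of Theorem \ref{G Schauder} and the standard interpolation inequality in parabolic Hölder spaces.
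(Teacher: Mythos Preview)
The paper does not prove this theorem at all: it is stated as a citation from \cite[Chapter~IV, Theorem~10.1]{ladyzhenskaia1988linear} and used as a black box. So there is no ``paper's own proof'' to compare against.

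Your localization-plus-absorption sketch is the standard route to interior parabolic Schauder estimates and is correct in outline. A couple of minor remarks. First, when you apply Theorem~\ref{G Schauder} to $v=\eta u$ on the subrectangle $R$, make sure you note that the zero auxiliary data automatically satisfy the compatibility conditions of all orders (since $\eta$ vanishes near $\partial_p R$), and that the constant in Theorem~\ref{G Schauder} depends on the $P^{k,\alpha}$ norms of the coefficients $A,B,C$ as well as on $\lambda$; this dependence is hidden in your $C_1$ and should be carried through. Second, the absorption step is cleanest if you phrase it via the standard iteration lemma (sometimes attributed to Simon or Campanato): introduce a one-parameter family of nested sets $K_\rho$ indexed by $\rho\in[\rho_0,\rho_1]$ with cut-offs whose derivatives scale like inverse powers of $(\rho'-\rho)$, obtain an inequality of the form $\Phi(\rho)\le \tfrac{1}{2}\Phi(\rho') + C(\rho'-\rho)^{-m}\bigl(|u|_{P^{k,\alpha}(\Omega_T)}+|f|_{P^{k,\alpha}(\Omega_T)}\bigr)$ for $\Phi(\rho):=|u|_{P^{2+k,\alpha}(K_\rho)}$, and iterate. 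This avoids having to juggle a discrete chain $K^{(0)}\Subset\cdots\Subset K^{(N)}$ with unspecified constants.
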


Given $\alpha \in (0,1)$ and $u \in P^{2,\alpha}(\Omega_T)$, we define the zero set
\begin{equation*}
   Z:= \{ (x,t) \in \overline{\Omega} \times [0,T) : u(x,t)=0 \},
\end{equation*} 
and the zero set at time $t$
\begin{equation*}
    Z_t:= \{ x \in \overline{\Omega} : (x,t) \in Z \}.
\end{equation*}
For such a zero $(x,t) \in Z$, we say it is a simple zero if $u_x(x,t) \neq 0$ and a repeated zero if $u_x(x,t) = 0$. We also define a function which counts the number of zeros at each time $z:[0,T) \rightarrow \mathbb{N}_0 \cup \{ \infty \}$, $z(t) := \abs{Z_t}$.
Suppose $u$ is a solution to (\ref{lin dir}). Recall that $\psi$ is the auxiliary data on the parabolic walls. We need to impose conditions on $\psi$ to deduce the monotonicity of $z$. 
\begin{defn}
We say $\psi$ is \textit{nice} if, for each $x \in \{a,b\}$, either:
\begin{enumerate}
\item[(i)] $\psi(x,t) = 0 \quad \forall t \in [0,T]$.
\item [(ii)] $\psi(x,t) \neq 0 \quad \forall t \in [0,T]$.
\item[(iii)] $\psi(x,t) \neq 0 \quad \forall t \in (0,T]$, and $\psi(x,0)=0$ is a simple zero.
\end{enumerate}
\end{defn}
The following theorem is a monotonicity formula for the number of zeros of $u$. It is a slight modification of Angenent's original argument   \cite{angenent1988zero}. See \cite{PhDthesis} for a full proof.
\begin{thm}[Monotonicity of zeros]\label{sturm}
Suppose $\mathcal{L}$ is a strictly parabolic operator with smooth coefficients. Suppose $\alpha \in (0,1)$ and $u \in P^{2,\alpha}(\Omega_T)$ is a solution to (\ref{lin dir}). If $\psi$ is nice, then $z:[0,T) \rightarrow \mathbb{N}_0 \cup \{ \infty \}$ is decreasing and $z$ is finite at any positive time.
\end{thm}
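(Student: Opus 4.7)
The plan is to follow Angenent's classical argument in \cite{angenent1988zero}, with extra care at the boundary to accommodate the three clauses in the definition of "nice". The key analytic ingredient is a local factorisation: near any zero $(x_0,t_0) \in Z$, one can write $u$ (after a parabolic rescaling centred at $(x_0,t_0)$) as
\begin{equation*}
u(x,t) = P_k(x-x_0,t-t_0) + o\bigl(\rho^k\bigr), \qquad \rho := |x-x_0| + |t-t_0|^{1/2},
\end{equation*}
where $P_k$ is a nonzero "caloric polynomial" of parabolic degree $k<\infty$, i.e.\ a nontrivial polynomial solution of the formal linearisation of $\mathcal{L}$ at $(x_0,t_0)$ with constant coefficients. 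The existence of such a leading term (so that $k<\infty$) is the unique-continuation statement for smooth-coefficient strictly parabolic operators; the matching of higher-order terms is obtained by induction using the smoothness of the coefficients of $\mathcal{L}$.

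Given this expansion, I would proceed in three steps. First, for interior zeros, the zero set of $u$ in a parabolic neighbourhood of $(x_0,t_0)$ is homeomorphic to the zero set of $P_k$, and a direct count for $P_k$ shows that the number of spatial zeros at times $t_0 + \epsilon$ is strictly less than $k$, while at time $t_0$ itself the zero at $x_0$ counts for one in $z(t_0)$. Thus each interior repeated zero produces a strict local drop of $z$ as time increases across $t_0$. Simple interior zeros ($u_x \neq 0$) persist smoothly in $t$ by the implicit function theorem and contribute $1$ to $z$ on a small interval of times. Summing over the finitely many zeros in $\overline{\Omega}$ at time $t_0$ (finite by the local structure and compactness, cf.\ the last step below) one deduces that $z$ is non-increasing on any interval where $z(t_0)<\infty$.

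Second, I would deal with the boundary using the "nice" hypothesis. In case (i) the endpoint is permanently in $Z$; it contributes a constant $+1$ to $z$ and plays no dynamical role. In case (ii) the endpoint is never in $Z$, again contributing nothing. The delicate case is (iii): the endpoint is a simple zero at $t=0$ which leaves $Z$ for $t>0$. Using that this zero is simple one shows, by the Hopf lemma applied to an $\mathcal{L}$-solution vanishing on a half-parabolic-neighbourhood of the endpoint, that this zero cannot "spawn" additional interior zeros as it detaches; equivalently, the boundary contribution to $z$ drops by at most one at $t=0^+$, and no spurious interior zeros accrue from the boundary.

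Third, for finiteness of $z(t)$ at each $t>0$: the local factorisation shows that $Z_t$ is a discrete subset of $\overline{\Omega}$ for every $t$ at which the expansion above applies pointwise, hence finite by compactness, unless $u(\cdot,t) \equiv 0$ on $\Omega$. The latter is excluded by unique continuation together with the "nice" condition (which forbids identically zero solutions consistent with the boundary data except trivially). The main obstacle I foresee is precisely case (iii) of the boundary analysis: constructing a quantitative barrier that rules out an accumulation of interior zeros near an endpoint whose boundary value is instantaneously pushed off zero requires combining the Hopf lemma with a careful local comparison to a heat polynomial, and is where the smoothness of the coefficients of $\mathcal{L}$ is used most delicately.
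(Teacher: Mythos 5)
The paper does not actually prove this theorem: it states that the result is \emph{``a slight modification of Angenent's original argument''}, citing Angenent's zero-set paper and deferring the details to the author's PhD thesis. Your proposal follows Angenent's approach (unique continuation via a local caloric-polynomial factorisation near interior zeros, plus bookkeeping at the parabolic walls), so up to the level of detail one can compare, you are on the same track the author intends.

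One substantive remark, though: you identify boundary case (iii) as the main obstacle and propose a Hopf-lemma-plus-heat-polynomial barrier to rule out spurious zeros peeling off the endpoint. This is more machinery than the hypothesis warrants. The point of requiring $\psi(x_0,0)=0$ to be a \emph{simple} zero is precisely that $u_x(x_0,0)\neq 0$; since $u\in P^{2,\alpha}(\Omega_T)$ is in particular $C^1$ up to the parabolic boundary, the implicit function theorem already gives that the zero set of $u$ in a parabolic half-neighbourhood of $(x_0,0)$ is a single $C^1$ curve $\{(\xi(t),t)\}$ through $(x_0,0)$, so for each small $t>0$ there is at most one zero near $x_0$, and it either leaves $\overline{\Omega}$ (local count drops) or enters the interior (local count constant). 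No Hopf lemma and no comparison with a heat polynomial are needed here; the smoothness of the coefficients of $\mathcal{L}$ enters only insofar as it guarantees the regularity of $u$ and the validity of unique continuation in the interior. Apart from overstating this obstacle, and the mild imprecision in phrasing the local drop at a repeated interior zero (the relevant fact is that for a caloric polynomial $P_k$ the number of \emph{distinct} spatial roots is non-increasing across $t_0$, dropping from $k$ to $1$ to $k\bmod 2$), the outline is sound.
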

\section{Quasi-linear parabolic operators}
Consider a quasi-linear parabolic operator
\begin{equation}\label{QL parabolic operator}
\mathcal{Q}(u) := u_t - A(x,u,u_x)u_{xx} + B(x,u,u_x),
\end{equation}
with $A(x,z,p),B(x,z,p) \in C^{\infty}(\overline{\Omega} \times \mathbb{R} \times \mathbb{R} )$ and $A(x,z,p) > 0$ on $\overline{\Omega} \times \mathbb{R} \times \mathbb{R}$.\par
Given a subsolution and a supersolution, their difference satisfies a linear parabolic inequality to which we can apply the maximum principle.
\begin{thm}[Avoidance principle]\label{avoidance princ}
Fix $\alpha \in (0,1]$ and $u_0,u_1 \in P^{2,\alpha}(\Omega_T)$ with $\mathcal{Q}u_0 \leq 0$ and $\mathcal{Q}u_1 \geq 0$ on $\Omega_T$. If $u_0 \leq u_1$ on $\Gamma_T$, then $u_0 \leq u_1$ on $\Omega_T$.
\end{thm}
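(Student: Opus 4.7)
The plan is the standard difference-function argument: we set $w := u_0 - u_1$, rewrite the differential inequality $\mathcal{Q}u_0 - \mathcal{Q}u_1 \le 0$ as a linear parabolic inequality for $w$, and then apply the linear maximum principle (Theorem \ref{linear max princ}). The boundary hypothesis $u_0 \le u_1$ on $\Gamma_T$ translates to $w \le 0$ on $\Gamma_T$, and the conclusion $w \le 0$ on $\Omega_T$ is exactly the avoidance principle.

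The key step is the linearisation. First I would split
\begin{equation*}
A(x,u_0,\partial_x u_0)\,\partial_{xx}u_0 - A(x,u_1,\partial_x u_1)\,\partial_{xx}u_1 = A(x,u_0,\partial_x u_0)\,\partial_{xx}w + \bigl[A(x,u_0,\partial_x u_0) - A(x,u_1,\partial_x u_1)\bigr]\,\partial_{xx}u_1.
\end{equation*}
Since $A,B \in C^\infty(\overline{\Omega}\times\mathbb{R}\times\mathbb{R})$, the fundamental theorem of calculus applied along the segment joining $(x,u_1,\partial_x u_1)$ to $(x,u_0,\partial_x u_0)$ gives representations
\begin{equation*}
A(x,u_0,\partial_x u_0) - A(x,u_1,\partial_x u_1) = a_1(x,t)\,w + a_2(x,t)\,\partial_x w,
\end{equation*}
\begin{equation*}
B(x,u_0,\partial_x u_0) - B(x,u_1,\partial_x u_1) = b_1(x,t)\,w + b_2(x,t)\,\partial_x w,
\end{equation*}
where $a_i,b_i$ are integrals of partial derivatives of $A,B$ along this segment. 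Collecting terms, the hypothesis $\mathcal{Q}u_0 - \mathcal{Q}u_1 \le 0$ becomes $\mathcal{L}w \le 0$ on $\Omega_T$, with
\begin{equation*}
\mathcal{L}w := w_t - A(x,u_0,\partial_x u_0)\,\partial_{xx}w + \bigl(b_2 - a_2\,\partial_{xx}u_1\bigr)\,\partial_x w + \bigl(b_1 - a_1\,\partial_{xx}u_1\bigr)\,w.
\end{equation*}

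The final step is verifying that $\mathcal{L}$ fits the framework of Theorem \ref{linear max princ}. Because $\Omega_T$ is bounded and $u_0,u_1 \in P^{2,\alpha}(\Omega_T)$, the functions $u_i, \partial_x u_i, \partial_{xx}u_i$ all lie in $L^\infty(\Omega_T)$, so the arguments of $A$ and $B$ stay in a fixed compact subset of $\overline{\Omega}\times\mathbb{R}\times\mathbb{R}$. There $A,B$ and their partials are bounded, hence $a_i,b_i \in L^\infty(\Omega_T)$; the leading coefficient $A(x,u_0,\partial_x u_0)$ is bounded and strictly positive by the parabolicity hypothesis on $\mathcal{Q}$; and the zeroth-order coefficient $b_1 - a_1\,\partial_{xx}u_1$ is bounded in particular from below by some $-C_0$. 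Moreover $w \in P^{2,\alpha}(\Omega_T)$, which is the regularity required by Theorem \ref{linear max princ}. Applying that theorem at any interior $(x^\ast,t^\ast) \in \Omega_T$ and using $w \le 0$ on $\Gamma_{t^\ast}$ yields $w(x^\ast,t^\ast) \le \max\{0,0\} = 0$, which is the desired inequality.

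The only real obstacle is the bookkeeping above: making sure the linearisation produces coefficients with the right $L^\infty$ bounds and, crucially, preserving the strict positivity of the leading coefficient. Once one observes that everything is being evaluated on a bounded subset of space-time-gradient space where $A,B$ are smooth, the bounds are automatic and the argument reduces to a direct application of Theorem \ref{linear max princ}.
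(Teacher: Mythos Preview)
Your proposal is correct and follows essentially the same approach as the paper: linearise the difference $\mathcal{Q}u_0-\mathcal{Q}u_1$ via the fundamental theorem of calculus along the segment between $(u_1,\partial_x u_1)$ and $(u_0,\partial_x u_0)$, then apply Theorem~\ref{linear max princ}. The only cosmetic difference is that the paper interpolates symmetrically via $u(s)=su_1+(1-s)u_0$ and obtains the integral average $\int_0^1 A(x,u(s),u(s)_x)\,ds$ as the leading coefficient, whereas you split off the second-order term first and get $A(x,u_0,\partial_x u_0)$; both are positive and bounded, so either version feeds into Theorem~\ref{linear max princ} equally well.
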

\begin{proof}
Setting $v:=u_1 - u_0$ and $u(s) := su_1 + (1-s)u_0$
\begin{align*}
v_t &\geq A(x,u_1,(u_1)_x)(u_1)_{xx} - B(x,u_1,(u_1)_x) -A(x,u_0,(u_0)_x)(u_0)_{xx} + B(x,u_0,(u_0)_x)\\ 
&= \int_{0}^1 \frac{\partial}{\partial s} \left[  A(x,u(s), (u(s))_x) (u(s))_{xx}  - B(x,u(s),(u(s))_x) \right] ds \\
&= \left[ \int_{0}^1  A(x, u(s),(u(s))_x) ds \right] v_{xx} \\
&+ \left[\int_{0}^1 \left( \frac{\partial A}{\partial p}(x,u(s),(u(s))_x) (u(s))_{xx} -  \frac{\partial B}{\partial p}(x,u(s),(u(s))_x)  \right)  ds \right] v_x\\
&+ \left[\int_{0}^1 \left( \frac{\partial A}{\partial z}(x,u(s),(u(s))_x) (u(s))_{xx} -  \frac{\partial B}{\partial z}(x,u(s),(u(s))_x)  \right)  ds \right] v\\
& := \tilde{A}(x,t) v_{xx} - \tilde{B}(x,t) v_x - \tilde{C}(x,t) v
\end{align*}
In particular, define the linear parabolic operator
\begin{equation*}
\widetilde{\mathcal{L}}(u) := u_t - \tilde{A}(x,t) u_{xx} + \tilde{B}(x,t) u_x + \tilde{C}(x,t) u.
\end{equation*}
Note that $\tilde{A}, \tilde{B},\tilde{C}$ continuous on $\Omega_T$ and $\tilde{A}(x,t) = \int_{0}^1 A(x,u(s),u(s)_x) ds \geq 0$ on $\Omega_T$. By compactness there exists some lower bound $\tilde{C} \geq -C_0$ on $\Omega_T$. Since $\widetilde{\mathcal{L}}(v) \geq 0$ on $\Omega_T$, by \ref{linear max princ}
\begin{equation*}
v(x,t) \geq  \min \{ 0 , \inf_{\Gamma_T} \left(v e^{C_0(T - t)} \right) \} \geq  0, \quad \forall (x,t) \in \Omega_T. \qedhere
\end{equation*}
\end{proof}
Given $u_0 \in C^{2,\alpha}(\Omega)$, and $\psi \in P^{2,\alpha}(\Omega_T)$ satisfying the compatibility conditions of order $0$ ($\psi = u_0$ on $S$), we consider the Dirichlet problem
\begin{equation}\label{quasi dir}
\begin{cases}
\mathcal{Q}(u) = 0 \quad &\text{in} \ \Omega_{T}\\
u = u_0 \quad &\text{on} \ \overline{\Omega} \times \{ 0 \}\\
u = \psi \quad &\text{on} \ \{ a,b\} \times [0,T]
\end{cases}
\end{equation}
For each $s\in (0,T]$, we can restrict to the shorter time Dirichlet problem
\begin{equation*}
(D_s):= \begin{cases}
\mathcal{Q}(u) = 0 \quad &\text{in} \ \Omega_{s}\\
u = u_0 \quad &\text{on} \ \overline{\Omega} \times \{ 0 \}\\
u = \psi \quad &\text{on} \ \{ a,b\} \times [0,s]
\end{cases}
\end{equation*}
In one spatial dimension, we can always write our operator $\mathcal{Q}$ in divergence form
\begin{equation}\label{QLD}
\mathcal{Q}(u) := u_t - \frac{\partial}{\partial x} \left( a(x,u,u_x) \right) + b(x,u,u_x),
\end{equation}
with $A(x,z,p) = \frac{\partial a}{\partial p}(x,z,p)$ and $b(x,z,p) = B(x,z,p) + \frac{\partial a}{\partial x}(x,z,p) + \frac{\partial a}{\partial z}(x,z,p) \cdot p $. The following theorem is from \cite[][Chaper V, Theorem 6.1]{ladyzhenskaia1988linear}
\begin{thm}[Existence and uniqueness for strictly parabolic operators \cite{ladyzhenskaia1988linear}]\label{existence}
Fix $\alpha \in (0,1]$. Suppose that for each $M>0$ the coefficients of $\mathcal{Q}$ from (\ref{QL parabolic operator}) and (\ref{QLD}) satisfy
\begin{enumerate}
\item[(i)] $B(x,z,0) \geq 0, \quad   \forall (x,z) \in \overline{\Omega} \times \mathbb{R}$.
\item[(ii)] $\frac{\partial a}{\partial p} \lesssim 1$, $\abs{a} , \abs{\frac{\partial a}{\partial z}} \lesssim (1+\abs{p})$, $\abs{\frac{\partial a}{\partial x} }, \abs{b} \lesssim (1+\abs{p})^2, \quad \forall (x,z,p) \in \overline{\Omega} \times [-M,M] \times \mathbb{R}$.
\item[(iii)] $ 1 \lesssim \frac{\partial a}{\partial p}, \quad \forall (x,z,p) \in \overline{\Omega} \times [-M,M] \times \mathbb{R}$.
\end{enumerate}
Then there exists a unique solution $u \in P^{2,\alpha}(\Omega_T)$ to the Dirichlet problem $(D_T)$.
\end{thm}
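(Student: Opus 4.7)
Uniqueness is immediate from the avoidance principle just established (Theorem \ref{avoidance princ}): any two solutions $u_0,u_1 \in P^{2,\alpha}(\Omega_T)$ with identical auxiliary data satisfy both $u_0 \leq u_1$ and $u_1 \leq u_0$ on $\Gamma_T$, and hence on all of $\Omega_T$.

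For existence, the plan is to apply the continuity method to the one-parameter family
\begin{equation*}
\mathcal{Q}_\sigma(u) := u_t - \bigl[(1-\sigma) + \sigma A(x,u,u_x)\bigr] u_{xx} + \sigma B(x,u,u_x), \quad \sigma \in [0,1],
\end{equation*}
with auxiliary data interpolating between a linear heat problem at $\sigma = 0$ and the given Dirichlet problem $(D_T)$ at $\sigma = 1$. Let $\Sigma \subseteq [0,1]$ be the set of parameters for which a solution in $P^{2,\alpha}(\Omega_T)$ exists. Then $\Sigma$ is nonempty because the global Schauder estimate (Theorem \ref{G Schauder}) solves the problem at $\sigma = 0$. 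Openness of $\Sigma$ follows from the implicit function theorem applied to $\mathcal{Q}_\sigma$ as a map between Hölder spaces: its Fréchet derivative at a solution is a strictly parabolic linear operator of the form (\ref{linear parabolic operator}) with smooth coefficients, which is an isomorphism onto the appropriate data space by Theorem \ref{G Schauder}.

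The heart of the argument is closedness of $\Sigma$, which reduces to establishing a priori $P^{2,\alpha}$-bounds on solutions of $\mathcal{Q}_\sigma(u) = 0$ that are uniform in $\sigma$. I would proceed in stages. First, a $C^0$-bound is obtained from the maximum principle, using hypothesis (i) to verify that suitable constants serve as super- and sub-solutions, so that $\norm{u}_{L^\infty(\Omega_T)}$ is controlled by the boundary data. Second, a boundary gradient estimate is constructed from upper and lower barriers that exploit the strict parabolicity (iii) together with the quadratic-in-$p$ growth of $b$ from (ii). Third, an interior gradient bound is obtained by a Bernstein-type maximum principle applied to $v := u_x^2$, whose evolution inequality can be closed using the growth conditions in (ii). Once $\norm{u_x}_{L^\infty(\Omega_T)}$ is secured, the equation reduces to a strictly parabolic linear equation for $u$ with bounded coefficients: De Giorgi--Nash--Moser (Theorem \ref{DGNM}) yields a $P^{0,\alpha}$-bound on $u_x$, and the interior and global Schauder estimates (Theorems \ref{L Schauder} and \ref{G Schauder}) bootstrap this to the required $P^{2,\alpha}$-bound.

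The main obstacle is the interior gradient estimate: the structural conditions (ii) must be exploited precisely in order to close the Bernstein calculation and extract $\sup_{\Omega_T}|u_x|$ from the $C^0$-bound and the boundary gradient bound. Everything downstream is then standard linear parabolic regularity already recorded in the appendix, and a subsequence of $P^{2,\alpha}$-uniform solutions converges in $P^{2,\beta}$ for any $\beta < \alpha$ to a solution at the limiting $\sigma$, closing the continuity argument.
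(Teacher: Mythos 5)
The paper does not prove this theorem: it is imported verbatim from Ladyzhenskaya--Solonnikov--Uraltseva (Chapter V, Theorem 6.1), so there is no in-text proof to compare against. Your reconstruction is a correct sketch of the standard route that LSU themselves take: uniqueness from the comparison/avoidance principle, existence by the continuity method with openness from the implicit function theorem and linear Schauder theory, non-emptiness from the linear heat problem at $\sigma=0$, and closedness from a chain of a priori estimates ($C^0$ bound, boundary gradient barriers, interior gradient estimate, then bootstrapping). The interpolating family $\mathcal{Q}_\sigma$ you propose does preserve the structural hypotheses (i)--(iii) uniformly in $\sigma\in[0,1]$, since $(1-\sigma)+\sigma A\geq\min(1,\inf A)>0$ and the growth conditions on $a,b$ only improve under multiplication by $\sigma$, so that part is sound.

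One imprecision worth flagging: you invoke Theorem \ref{DGNM} to obtain a $P^{0,\alpha}$ bound on $u_x$ once $\norm{u_x}_{L^\infty}$ is controlled. As stated in the appendix, Theorem \ref{DGNM} gives H\"older regularity of $u$ itself, not of $u_x$, for the frozen-coefficient non-divergence linear operator. The step you actually need is H\"older regularity of the \emph{gradient}, which in the LSU framework comes from the divergence structure (\ref{QLD}): one applies the De Giorgi/Nash/Moser theory to the divergence-form equation satisfied by $w:=u_x$ (obtained by differentiating, or directly from the divergence form with bounded measurable coefficients after freezing $u$). The conclusion is the same and this is exactly what LSU do, but Theorem \ref{DGNM} as recorded in the paper is not literally the statement being used there. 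With that small substitution your sketch matches the cited proof.
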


Suppose now that $\mathcal{Q}$ satisifies criteria $(i)$ and $(ii)$, but does not satisfy $(iii)$ ($\mathcal{Q}$ is not strictly parabolic). 
\begin{thm}[Short-time existence and uniqueness]\label{loc solv}
Fix $\alpha \in (0,1]$. Suppose that for each $M>0$ the coefficients of $\mathcal{Q}$ from (\ref{QL parabolic operator}) and (\ref{QLD}) satisfy
\begin{enumerate}
\item[(i)] $B(x,z,0) \geq 0, \quad   \forall (x,z) \in \overline{\Omega} \times \mathbb{R}$.
\item[(ii)] $\frac{\partial a}{\partial p} \lesssim 1$, $\abs{a} , \abs{\frac{\partial a}{\partial z}} \lesssim (1+\abs{p})$,$\abs{\frac{\partial a}{\partial x} }, \abs{b} \lesssim (1+\abs{p})^2, \quad \forall (x,z,p) \in \overline{\Omega} \times [-M,M] \times \mathbb{R}$.
\end{enumerate}
Then, there exists $s \in (0,T]$ and a unique $u \in P^{2, \alpha}(\Omega_{s})$ such that $u$ solves the Dirichlet problem $(D_s)$.
\end{thm}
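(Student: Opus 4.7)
The plan is to reduce to the strictly parabolic case of Theorem~\ref{existence} by truncating the coefficient $A$ at large gradients, solving the resulting Dirichlet problem globally on $\Omega_T$, and then arguing that on a sufficiently short interval $[0,s]$ the gradient of this solution never leaves the region where truncation is inactive. Concretely, set $M := \|u_0\|_{C^0(\Omega)} + \|\psi\|_{C^0(\overline{\Omega}_T)} + 1$ and $N := \|u_0\|_{C^1(\Omega)} + \|\psi\|_{P^1(\Omega_T)} + 1$, and pick a smooth cutoff $\chi:\mathbb{R}\to[0,1]$ with $\chi\equiv 1$ on $[-N,N]$ and $\chi\equiv 0$ off $[-2N,2N]$. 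Defining
\[
\widetilde{A}(x,z,p) := \chi(p)A(x,z,p) + (1-\chi(p))\lambda
\]
for $\lambda := \inf\{A(x,z,p): x\in\overline{\Omega},\ |z|\le M,\ |p|\le 2N\} > 0$, and letting $\widetilde{a}$ be the primitive in $p$ anchored at $\widetilde{a}(x,z,0)=a(x,z,0)$, $\widetilde{B}:=B$, one checks that the modified operator $\widetilde{\mathcal{Q}}$ inherits hypotheses $(i),(ii)$ from $\mathcal{Q}$ (the growth constants now depending on $N$) and gains uniform parabolicity $\widetilde{a}_p \ge \lambda$. Hence Theorem~\ref{existence} applies, producing a unique $\widetilde{u}\in P^{2,\alpha}(\Omega_T)$ solving the modified Dirichlet problem $(\widetilde{D}_T)$ with the same auxiliary data.

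The heart of the argument is then a short-time $C^1$-estimate showing that $\widetilde{u}$ in fact solves the original problem. First, the avoidance principle Theorem~\ref{avoidance princ} applied to $\widetilde{\mathcal{Q}}$ with the constant barriers $\pm M$ (which are sub/supersolutions thanks to $\widetilde{B}(x,z,0)=B(x,z,0)\ge 0$) gives $\|\widetilde{u}\|_{L^\infty(\Omega_T)}\le M$, so the truncation region on $z$ is respected. Next, since $\widetilde{\mathcal{Q}}$ is uniformly parabolic with smooth coefficients and the data satisfy the order-zero compatibility, the global Schauder estimate Theorem~\ref{G Schauder} applied to $\widetilde{u}$ yields $|\widetilde{u}|_{P^{2,\alpha}(\Omega_T)} \le C$ with $C=C(N,M,T,u_0,\psi)$ independent of $s$. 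In particular, $\widetilde{u}_x$ is Hölder $\frac{\alpha}{2}$ in time, so
\[
|\widetilde{u}_x(x,t)| \le |\widetilde{u}_x(x,0)| + C\,t^{\alpha/2} \le \|u_0\|_{C^1(\Omega)} + C\,t^{\alpha/2}
\]
uniformly on $\overline{\Omega}$. Choosing $s\in(0,T]$ small enough that $C s^{\alpha/2} < 1$, we get $\sup_{\Omega_s}|\widetilde{u}_x| < N$, and on $\Omega_s$ the cutoff is inactive: $\widetilde{\mathcal{Q}}(\widetilde{u}) = \mathcal{Q}(\widetilde{u}) = 0$. Thus $u:=\widetilde{u}|_{\Omega_s}\in P^{2,\alpha}(\Omega_s)$ solves $(D_s)$.

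Uniqueness is then a direct application of Theorem~\ref{avoidance princ}: if $u_1,u_2\in P^{2,\alpha}(\Omega_s)$ both solve $(D_s)$, each is simultaneously a sub- and supersolution of $\mathcal{Q}$ with identical parabolic boundary data on $\Gamma_s$, giving $u_1\le u_2$ and $u_2\le u_1$. The main obstacle is ensuring the Schauder constant $C$ for $\widetilde{u}$ genuinely does not depend on $s$; this is why we first extend globally on $\Omega_T$ via Theorem~\ref{existence} and estimate there, rather than attempting a direct fixed-point argument on the unknown interval. A secondary technical point is the choice of $\lambda$: it must be strictly positive, which requires the compactness of $\overline{\Omega}\times[-M,M]\times[-2N,2N]$ together with the pointwise positivity $A>0$, both of which are assumed.
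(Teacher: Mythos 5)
Your proof takes essentially the same route as the paper: truncate the diffusion coefficient in the gradient variable $p$ so that $\widetilde{\mathcal{Q}}$ becomes uniformly parabolic and satisfies all the hypotheses of Theorem~\ref{existence}, solve the truncated Dirichlet problem globally on $\Omega_T$, and then argue that for $s$ small enough the gradient of $\widetilde{u}$ stays inside the region where the truncation is inactive, so $\widetilde{u}\vert_{\Omega_s}$ solves $(D_s)$; uniqueness follows from Theorem~\ref{avoidance princ}. The paper carries out the short-time step by appealing directly to continuity of $\widetilde u$ and $\widetilde u_x$ at $t=0$ with cutoff parameter $M=\abs{u_0}_{C^1(\Omega)}$, whereas your choice of a strictly larger threshold $N>\norm{u_0}_{C^1(\Omega)}$ combined with the quantitative temporal H\"older bound $\abs{\widetilde u_x(x,t)}\le \abs{\widetilde u_x(x,0)}+Ct^{\alpha/2}$ builds in an explicit margin; this is a slightly more careful version of the same observation.

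One small slip to fix: under hypothesis $(i)$, $B(x,z,0)\ge 0$, every constant $c$ satisfies $\widetilde{\mathcal{Q}}(c)=B(x,c,0)\ge 0$, so \emph{both} $M$ and $-M$ are supersolutions in the sense of Theorem~\ref{avoidance princ}. The avoidance principle therefore gives $\widetilde u\le M$ but not the lower bound $\widetilde u\ge -M$, contrary to your parenthetical claim that $\pm M$ are ``sub/supersolutions.'' This does not break the argument, since the $L^\infty$ control on $\widetilde u$ is already furnished by the membership $\widetilde u\in P^{2,\alpha}(\Omega_T)$ coming out of Theorem~\ref{existence}, and your truncation is in $p$ only; the intermediate barrier step can simply be dropped.
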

\begin{proof}
Let $M:= \abs{u_0}_{C^1(\Omega)} < \infty$ and let $\chi$ be any smooth bump function supported on $[-2,2]$ and equal to $1$ on $[-1,1]$. We define a new coefficient
\begin{equation*}
\tilde{a}(x,z,p) := a(x,z,0) + \int_0^p \chi(\frac{s}{M}) \frac{\partial a}{\partial p}(x,z,s) + (1 - \chi(\frac{s}{M})) \ ds,
\end{equation*}
a new quasi-linear operator
\begin{equation*}
\widetilde{\mathcal{Q}}(u) := u_t - \frac{\partial}{\partial x} \left( \tilde{a}(x,u,u_x) \right) + b(x,u,u_x),
\end{equation*}
and a class of new Dirichlet problems
\begin{equation*}
(\widetilde{D_s}):= \begin{cases}
\widetilde{\mathcal{Q}}(u) = 0 \quad &\text{in} \ \Omega_{s}\\
u = u_0 \quad &\text{on} \ \overline{\Omega} \times \{ 0 \}\\
u = \psi \quad &\text{on} \ \{ a,b\} \times [0,s]
\end{cases}
\end{equation*}
Observe the following:
\begin{itemize}
\item Since $\tilde{a} = a$ for $\abs{p} \leq M$, $\widetilde{Q}$ satisifies $(i)$.
\item $\frac{\partial \tilde{a}}{\partial x}(x,z,p) = \frac{\partial a}{\partial x}(x,z,0) + \int_{0}^p \chi(\frac{s}{M_1}) \frac{\partial^2 a}{\partial x \partial p}(x,z,s) ds$.
\item $\frac{\partial \tilde{a}}{\partial z}(x,z,p) = \frac{\partial a}{\partial z}(x,z,0) + \int_{0}^p \chi(\frac{s}{M}) \frac{\partial^2 a}{\partial z \partial p}(x,z,s) ds$.
\item $\frac{\partial \tilde{a}}{\partial p}(x,z,p) = \chi(\frac{p}{M}) \frac{\partial a}{\partial p}(x,z,p) + (1 - \chi(\frac{p}{M}))$.
\end{itemize}
From the above $\widetilde{\mathcal{Q}}$ satisfies $(ii)$. Finally, since $\frac{\partial \tilde{a}}{\partial p} \equiv 1$ outside of a compact set, $\widetilde{\mathcal{Q}}$ satisfies $(iii)$. By \ref{existence}, there exists $\tilde{u} \in P^{2,\alpha}(\Omega_T)$ solving $(\widetilde{D_T})$. Moreover, by the continuity of $\tilde{u}$ and $\tilde{u}_x$, there exists $s \in (0,T]$ such that
\begin{equation*}
\abs{\tilde{u}(\cdot,t)}_{C^1(\Omega)} \leq M, \quad \forall t \in [0,s].
\end{equation*}
In particular, since $\widetilde{\mathcal{Q}} = \mathcal{Q}$ on $\overline{\Omega} \times \mathbb{R} \times [-M,M]$, we have that $\tilde{u} \in P^{2,\alpha}(\Omega_s)$ solves $({D_s})$. Finally, if $u_1,u_2 \in P^{2,\alpha}(\Omega_s)$ are solutions to $(D_s)$, then by \ref{avoidance princ} we have $u_1 = u_2$ on $\Omega_s$.
\end{proof}

\begin{thm}[Existence of maximal solutions]\label{max sol}
Fix $\alpha \in (0,1]$. There exists a unique pair $\tau \in (0,T]$ and $u:\overline{\Omega} \times [0,\tau) \rightarrow \mathbb{R}$ such that
\begin{enumerate}
\item[(A)] $u \in P^{2,\alpha}(\Omega_s)$, $\forall s \in (0,\tau)$.
\item[(B)] $u$ solves $(D_s)$, $\forall s \in (0,\tau)$.
\item[(C)] If $\tau < T$ then $u \notin P^{2,\alpha}(\Omega_\tau)$ and $\limsup_{s \rightarrow \tau} \abs{u(\cdot,s)}_{C^1(\Omega)} = \infty$.
\end{enumerate}
\end{thm}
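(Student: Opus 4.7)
The plan is to define $\tau$ as the supremum of times for which a $P^{2,\alpha}$ solution exists, glue the solutions together using uniqueness, and show that a finite breakdown time forces the $C^1$-norm to blow up.

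First I would set
\[
\tau := \sup \{ s \in (0,T] : \exists \, u_s \in P^{2,\alpha}(\Omega_s) \text{ solving } (D_s) \}.
\]
By Theorem \ref{loc solv} this supremum is taken over a non-empty set, so $\tau > 0$. The key observation is that uniqueness (Theorem \ref{avoidance princ} applied to both $u_{s_1}-u_{s_2}$ and $u_{s_2}-u_{s_1}$) implies that if $u_{s_1}$ and $u_{s_2}$ solve $(D_{s_1})$ and $(D_{s_2})$ respectively with $s_1 \leq s_2$, then $u_{s_1} = u_{s_2}|_{\Omega_{s_1}}$. Hence the family of solutions is consistent, and defining $u(x,t) := u_s(x,t)$ for any $s > t$ with $u_s$ solving $(D_s)$ yields a well-defined function $u : \overline{\Omega} \times [0,\tau) \rightarrow \mathbb{R}$ satisfying (A) and (B). Uniqueness of $\tau$ itself is automatic from the definition: if a solution in $P^{2,\alpha}(\Omega_s)$ existed for some $s > \tau$, it would contradict the supremum.

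The main content is (C). Suppose for contradiction that $\tau < T$ and that there exists $M$ with $|u(\cdot,s)|_{C^1(\overline{\Omega})} \leq M$ for all $s \in [0,\tau)$. The strategy is to promote this $C^1$-bound to higher regularity up to time $\tau$, and then restart via Theorem \ref{loc solv}. Substituting $u$ into the coefficients of $\mathcal{Q}$ in divergence form, we obtain a linear operator $\mathcal{L}$ of the form (\ref{linear parabolic operator}) whose coefficients $A(x,t)$, $B(x,t)$, $C(x,t)$ are bounded on $\Omega_\tau$ (using condition (ii) of Theorem \ref{loc solv} together with the $C^1$-bound), with $A$ bounded uniformly away from zero since $\frac{\partial a}{\partial p} > 0$ on the compact set $\overline{\Omega} \times [-M',M'] \times [-M,M]$ (where $M' := \sup |u|$, finite by the $C^0$-bound implicit in the $C^1$-bound together with $\psi$ being $P^{2,\alpha}$). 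Applying De Giorgi–Nash–Moser (Theorem \ref{DGNM}) on interior subdomains and combining with the parabolic boundary regularity coming from $\psi \in P^{2,\alpha}$, one obtains a uniform $P^{0,\alpha}$-bound for $u$ on $\overline{\Omega} \times [0,\tau)$. Feeding this back into the linear equation and applying global Schauder (Theorem \ref{G Schauder}) upgrades this to a uniform $P^{2,\alpha}(\Omega_s)$-bound for $s < \tau$, which in particular yields a well-defined limit $u(\cdot,\tau) \in C^{2,\alpha}(\overline{\Omega})$ satisfying the order-zero compatibility condition with $\psi(\cdot,\tau)$.

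I would then apply Theorem \ref{loc solv} with initial data $u(\cdot,\tau)$ and time-shifted boundary data $\psi(\cdot, \cdot + \tau)$, producing a solution $\tilde u \in P^{2,\alpha}(\Omega \times (0,\delta))$ for some $\delta > 0$. Using the avoidance principle once more to ensure the concatenation with $u$ lies in $P^{2,\alpha}(\Omega_{\tau+\delta})$ (here Schauder regularity across $t=\tau$ is needed to justify that the glued function is in $P^{2,\alpha}$, not merely piecewise so — this follows because both pieces agree with their common limit up to order $(2,\alpha)$ at $t=\tau$), we obtain a solution to $(D_{\tau+\delta})$, contradicting the definition of $\tau$. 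The expected main obstacle is this last regularity-across-the-gluing step: one must be careful that the bootstrap from $C^1$-bound to $P^{2,\alpha}$ really reaches up to $t=\tau$ (not only on compactly contained subsets of $\Omega_\tau$), which is why the global Schauder estimate of Theorem \ref{G Schauder}, rather than only the interior version, is the right tool.
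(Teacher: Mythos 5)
Your definition of $\tau$ and the gluing-via-uniqueness argument match the paper exactly, and your logical structure for (C) (assume $\tau<T$ and a uniform $C^1$-bound, show $u$ extends in $P^{2,\alpha}$ past $\tau$, restart via Theorem \ref{loc solv}) is also how the paper proceeds. The genuine divergence is in \emph{how} you promote the $C^1$-bound to a $P^{2,\alpha}(\Omega_\tau)$-bound, and your route has a gap there.

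You propose bootstrapping: substitute $u$ into the coefficients, apply De Giorgi--Nash--Moser to get a $P^{0,\alpha}$-bound, then feed this into global Schauder. The problem is that Theorem \ref{DGNM} as stated is an \emph{interior} estimate (fix $K\Subset\Omega_T$), so it only gives H\"older regularity of the coefficients away from the parabolic boundary, whereas Theorem \ref{G Schauder} requires the coefficients to lie in $P^{k,\alpha}$ on all of $\Omega_T$. Your parenthetical that ``parabolic boundary regularity coming from $\psi\in P^{2,\alpha}$'' fills this in is hand-waving: the boundary data controls $u$ on $\Gamma_T$, not the H\"older continuity of $u_x$ (hence of the coefficients) up to the spatial walls, and the paper has no boundary De~Giorgi--Nash--Moser theorem to call on. There is also a secondary issue that the coefficients $A(x,t)=A(x,u,u_x)$ and $B(x,t)$ depend on $u_x$, so to make them $P^{0,\alpha}$ you would actually need a $P^{1,\alpha}$-type bound on $u$, which means differentiating the equation and running DGNM on $u_x$ --- again only interior.

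The paper sidesteps all of this by \emph{not} bootstrapping. Given the uniform bound $|u|,|u_x|\le M$ on $[0,\tau)$, it reuses the truncated operator $\widetilde{\mathcal{Q}}$ from the proof of Theorem \ref{loc solv} (with cutoff adapted to this $M$). That operator satisfies hypotheses (i)--(iii) of Theorem \ref{existence}, which is a \emph{global} existence theorem for strictly parabolic quasi-linear Dirichlet problems producing a solution $\tilde u\in P^{2,\alpha}(\Omega_\tau)$ --- the boundary regularity comes for free from \ref{existence}, so no boundary DGNM or Schauder estimate is needed. Since $\widetilde{\mathcal{Q}}=\mathcal{Q}$ where $|p|\le M$, the original $u$ already solves $(\widetilde{D_s})$ for $s<\tau$, and uniqueness identifies $\tilde u$ with $u$, giving $u\in P^{2,\alpha}(\Omega_\tau)$; this contradicts the first half of (C), which is established separately by the restart argument you describe. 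So the modified operator is doing the real work that your DGNM/Schauder chain would need a substantially larger toolbox to replicate. If you want to make your route airtight you would need to either import boundary H\"older estimates from the literature, or observe (as the paper does) that the already-proved Theorem \ref{existence} makes the whole bootstrap unnecessary.
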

\begin{proof}
By \ref{loc solv}
\begin{equation*}
\tau:= \sup \{ s \in (0,T] : \exists u \in P^{2,\alpha}(\Omega_s) \text{ such that } u \text{ solves } (D_s) \},
\end{equation*}
is well defined. By uniqueness the solutions agree on overlaps, and give a well defined, unique function $u:\overline{\Omega} \times [0,\tau) \rightarrow \mathbb{R}$ satisfying properties $(A)$ and $(B)$. For $\tau < T$, assume that $u \in P^{2,\alpha}(\Omega_\tau)$. Then $u(\cdot,\tau) \in C^{2,\alpha}(\Omega)$ and we can reapply \ref{loc solv} to get a solution $\hat{u} \in P^{2,\alpha}(\Omega_{\tau,\tau+\epsilon})$ for some $\epsilon > 0$. By virtue of the PDE that they solve, $u$ and $\hat{u}$ piece together to give $u \in P^{2,\alpha}(\Omega_{\tau+\epsilon})$ solving $(D_{\tau+\epsilon})$, contradicting the definition of $\tau$. So $u \notin P^{2,\alpha}(\Omega_\tau)$.

Finally, assume that $\limsup_{s \rightarrow \tau} \abs{u(\cdot,s)}_{C^1(\Omega)} < \infty$ so that $\abs{u} ,\abs{u_x} \leq M$ for some positive constant $M > 0$. Consider the Dirichlet problem $(\widetilde{D_\tau})$ defined in the proof of \ref{max sol}. By \ref{existence} there exists a unique $\tilde{u} \in P^{2,\alpha}(\Omega_\tau)$ solving $(\widetilde{D_\tau})$. Since $\widetilde{\mathcal{Q}} = \mathcal{Q}$ on $\overline{\Omega} \times \mathbb{R} \times [-M_1,M_1]$, $u$ also solves $(\widetilde{D_s})$ for $s \in (0,\tau)$. Therefore, by the uniqueness of solutions, $\tilde{u}$ is an extension of $u$ in $P^{2,\alpha}(\Omega_\tau)$, contradicting what we have just previously shown.
\end{proof}
%\begin{thm}[Schauder Estimate]
%\textnormal{Fix $\alpha \in (0,1)$ and $M_1 > 0$. Suppose that for some $\nu,\mu,\mu_1>0$, that $a$ and $b$ from (\ref{QLD}) satisfy the following properties:}
%\begin{itemize}
%\item $\norm{\frac{\partial b}{\partial p}}_{2,\Omega} \leq \mu_1$.
%\item \textnormal{$\nu \leq \frac{\partial a}{\partial p}(x,p) \leq \mu$, for any $x \in \overline{\Omega}$ and $\abs{p} \leq M_1 $.}
%\end{itemize}
%\textnormal{Then, for any solution $u \in C^\infty(\Omega_T)$ of (\ref{QLD}), with $\abs{u_x} \leq M_1$, we have the Schauder estimate:}
%\begin{equation*}
%\abs{u}_{2+\alpha,\Omega_T(\epsilon)} \leq C(\nu,\mu,\mu_1,M_1,\epsilon,\alpha)
%\end{equation*}
%\end{thm}
\begin{thm}[Regularity of solutions]\label{reg of sol}
Fix $\alpha \in (0,1]$. Suppose the auxiliary data $u_0 \in C^{\infty}(\Omega)$, $\psi \in C^{\infty}(\Omega_T)$ for (\ref{quasi dir}) satisfy the compatibility conditions of all orders. Let $u: \overline{\Omega} \times [0,\tau) \rightarrow \mathbb{R}$ be the maximal solution from theorem \ref{max sol}. Then for any $s \in (0,\tau)$, $u \in C^{\infty}(\Omega_s)$.
\end{thm}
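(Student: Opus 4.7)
The strategy is a standard bootstrap: I view $u$ as a solution of a linear parabolic Dirichlet problem whose coefficients and inhomogeneity are built out of $u$ itself, and then apply the linear Schauder theory iteratively, gaining one parabolic order of regularity at each step.

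Fixing $s \in (0,\tau)$, we have $u \in P^{2,\alpha}(\Omega_s)$ by theorem \ref{max sol}. Rewriting (\ref{QL parabolic operator}) as
\[
u_t - \widetilde{A}(x,t)\, u_{xx} = \widetilde{f}(x,t), \qquad \widetilde{A}(x,t) := A(x,u,u_x), \qquad \widetilde{f}(x,t) := -B(x,u,u_x),
\]
exhibits $u$ as a solution of the linear Dirichlet problem (\ref{lin dir}) with the first-order and zeroth-order coefficients set to zero. Since $A, B \in C^\infty(\overline{\Omega} \times \mathbb{R} \times \mathbb{R})$, since $u$ and $u_x$ are bounded on $\overline{\Omega_s}$ (as $u \in P^{2,\alpha}(\Omega_s)$), and since $A > 0$, the linearized operator is uniformly parabolic on $\Omega_s$ with coefficients that are smooth functions of $(x,u,u_x)$.

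The bootstrap then proceeds by induction on $k$: assume $u \in P^{k,\alpha}(\Omega_s)$ for some $k \geq 2$. Via the chain rule, both $\widetilde{A}$ and $\widetilde{f}$ inherit $P^{k-1,\alpha}(\Omega_s)$-regularity from $u$ (the only real loss is the one spatial derivative carried by $u_x$, which is compatible with the parabolic scaling). The compatibility conditions of all orders satisfied by $u_0$ and $\psi$ for the quasi-linear problem (\ref{quasi dir}), together with the identity $\mathcal{Q}(u) = 0$, imply the analogous compatibility of all orders for the linearized problem: one differentiates $\mathcal{Q}(u) = 0$ repeatedly at the parabolic corner and uses the PDE to exchange temporal derivatives for spatial ones, checking that the resulting boundary values agree. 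Theorem \ref{G Schauder} then supplies a unique $v \in P^{k+1,\alpha}(\Omega_s)$ solving the linear Dirichlet problem; by linear uniqueness (e.g.\ from the maximum principle \ref{linear max princ}) we must have $v = u$, and hence $u \in P^{k+1,\alpha}(\Omega_s)$. Iterating $k \to k+1$ yields $u \in P^{k,\alpha}(\Omega_s)$ for every $k \in \mathbb{N}$, so $u \in C^\infty(\Omega_s)$.

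The main obstacle is essentially bookkeeping rather than a genuine analytic difficulty: one must carefully verify that the composition $A(\cdot, u, u_x)$ with $A$ smooth and $u \in P^{k,\alpha}$ truly respects the anisotropic parabolic Hölder scaling at each induction step, and that the higher-order compatibility conditions for the linearized problem really do descend from those of the quasi-linear one as indicated. No new ideas beyond iterated application of the linear parabolic Schauder framework from \cite{ladyzhenskaia1988linear} are needed.
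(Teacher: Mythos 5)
Your proof is correct and takes essentially the same approach as the paper: freeze the coefficients by substituting $u$ into $A$ and $B$, view $u$ as solving the resulting linear Dirichlet problem, and bootstrap via the global Schauder estimate \ref{G Schauder}. You spell out two details the paper leaves implicit (the uniqueness argument identifying the Schauder solution with $u$, and the descent of the higher-order compatibility conditions to the linearized problem), but the underlying mechanism is identical.
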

\begin{proof}
Fix $s \in (0,\tau)$. Since $u \in P^{2,\alpha}(\Omega_s)$, substituting this $u$ into the coefficients of our operator gives a linear operator with coefficients in the class $P^{1,\alpha}(\Omega_s)$. Applying\ref{G Schauder} and bootstrapping gives $u \in \bigcap_{k \geq 1} P^{k,\alpha}(\Omega_s) = C^{\infty}(\Omega_s)$.
\end{proof}
Given two solutions $u_1,u_2 \in C^{\infty}(\Omega_T)$ of $\mathcal{Q}=0$, we define their intersection number at time $t$ to be
\begin{equation*}
I(t) := \abs{ \{ x \in \overline{\Omega} : u_1(x,t) = u_2(x,t) \}}
\end{equation*}
We now apply \ref{sturm} to the difference $u_1-u_2$.
\begin{thm}[Intersection principle]\label{int princ}
Let $u_1,u_2 \in C^{\infty}(\Omega_T)$ with $\mathcal{Q}(u_i) = 0$ on $\Omega_T$ and $u_1-u_2$ nice on $S_T$. Then $I(t)$ is decreasing for all time and is finite for positive time.
\end{thm}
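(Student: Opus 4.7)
The plan is to realise the difference $v := u_1 - u_2$ as the solution of a strictly parabolic linear equation with smooth coefficients, and then invoke Theorem \ref{sturm}. Since $u_1,u_2\in C^{\infty}(\Omega_T)$, we have $v\in C^{\infty}(\Omega_T)\subseteq P^{2,\alpha}(\Omega_T)$ for every $\alpha\in(0,1)$, and the zero-set counting function $z(t)$ of $v$ coincides exactly with the intersection number $I(t)$, so the conclusion of Theorem \ref{sturm} is precisely what we want.

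First I would linearise $\mathcal{Q}$ along the segment $u(s):= s u_1 + (1-s)u_2$ exactly as in the proof of the avoidance principle (Theorem \ref{avoidance princ}). Writing $0 = \mathcal{Q}(u_1) - \mathcal{Q}(u_2) = \int_{0}^{1}\tfrac{d}{ds}\mathcal{Q}(u(s))\,ds$ and differentiating under the integral produces an equation of the form $v_t - \tilde{A}(x,t)\, v_{xx} + \tilde{B}(x,t)\, v_x + \tilde{C}(x,t)\, v = 0$, with $\tilde{A}(x,t) = \int_{0}^{1} A(x,u(s),u(s)_x)\,ds$ and analogous expressions for $\tilde{B}$ and $\tilde{C}$ built from the partial derivatives of $A$ and $B$ in their $z,p$ arguments, together with $u(s)_{xx}$. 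Because $A(x,z,p)>0$ everywhere, $\tilde{A}>0$, so the induced operator $\mathcal{L}$ is strictly parabolic; because $A,B$ and $u_1,u_2$ are smooth, every coefficient of $\mathcal{L}$ is smooth on $\overline{\Omega}\times[0,T]$.

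Next I would identify the auxiliary data: on the parabolic walls $\{a,b\}\times[0,T]$ the function $v$ coincides with $u_1-u_2$, which is assumed to be nice on $S_T$, and on $\overline{\Omega}\times\{0\}$ it coincides with the smooth initial difference $u_1(\cdot,0)-u_2(\cdot,0)$. These are precisely the hypotheses needed to apply Theorem \ref{sturm} to the linear Dirichlet problem whose solution is $v$. The conclusion is that $z(t) = I(t)$ is decreasing on $[0,T)$ and finite for every positive $t$.

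I expect no real obstacle here: the content is purely a reduction, and the only things that need genuine care are verifying that the coefficients produced by the convex-combination trick are smooth (this uses $u_1,u_2\in C^{\infty}$) and that $\tilde{A}$ is strictly positive (this uses $A>0$). The avoidance-principle computation already carries out the same algebraic manipulation, so reusing it is essentially verbatim; the only genuinely new input is checking niceness of $v$ on the side walls, which is exactly what the hypothesis provides.
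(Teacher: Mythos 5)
Your proposal takes exactly the same route as the paper: linearise along the convex combination $u(s) = s u_1 + (1-s)u_2$ as in the proof of the avoidance principle, observe that $v = u_1-u_2$ then solves a linear parabolic equation $\widetilde{\mathcal{L}}(v)=0$, note the coefficients are smooth and the data on the walls is nice, and invoke the zero-counting result \ref{sturm}.

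The one place where you are slightly too quick is the claim that $A>0$ directly gives strict parabolicity of $\widetilde{\mathcal{L}}$. Pointwise positivity of $\tilde{A}(x,t) = \int_0^1 A(x,u(s),u(s)_x)\,ds$ is immediate, but what \ref{sturm} actually requires is a uniform positive lower bound. The paper supplies this by first noting that since $u_1,u_2$ are smooth, the gradients $u(s)_x$ are bounded by some $M$, and then applying the extreme value theorem to $A$ on the compact set $\overline{\Omega}\times[-M,M]$ (in the $p$-slot) to extract a $\lambda>0$ with $\tilde{A}\geq\lambda$ on $\Omega_T$. You flag this as something that "needs genuine care" but the justification you give ("because $A>0$ everywhere") does not by itself rule out $\tilde{A}$ degenerating toward zero. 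With the extreme value theorem step added, your argument matches the paper's proof verbatim.
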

\begin{proof}
Setting $v:= u_1 - u_2$ as in the proof of \ref{avoidance princ}, we see that $\widetilde{\mathcal{L}}(v) = 0$ on $\Omega_{T}$. Moreover, since $u_1,u_2$ are smooth, their gradients are bounded and there exists $M \geq 0$ such that $\abs{u(s)_x} \leq M$. By the extreme value theorem, $A$ attains its minimum $\lambda >0$ for $\abs{p} \leq M$, and hence
\begin{equation*}
\tilde{A}(x,t) = \int_{0}^1 A(x,u(s)_x) ds \geq \int_{0}^1 \lambda \ ds = \lambda.
\end{equation*}
This means $\widetilde{\mathcal{L}}$ is strictly parabolic on $\Omega_{T}$. The coefficients of $\widetilde{\mathcal{L}}$ are smooth and \ref{sturm} applies.
\end{proof}

\section{Miscellaneous}
We include a statement for the classical avoidance principle between a closed solution and a uniformly proper solution. Note that without the uniformly proper hypothesis on the non-closed solution, the theorem fails (consider the curve $\tilde{\gamma}$ from remark \ref{remark arm} and a closed solution $\eta$ disjoint from the $x$-axis but intersecting the line $L$).
\begin{lem}[Avoidance principle for CSF]\label{avoid}
Let $\gamma: \mathbb{R} \times [0,T] \rightarrow M$ be a uniformly proper solution to CSF and $\eta : S^1 \times [0,T] \rightarrow M$ a closed solution to CSF. If the curves are initially disjoint, then they remain disjoint.
\begin{equation*}
  \Ima(\eta(\cdot,0)) \cap \Ima(\gamma(\cdot,0)) = \emptyset  \ \implies \ \Ima(\eta(\cdot,t)) \cap \Ima(\gamma(\cdot,t)) = \emptyset, \ \forall t \in [0,T].
\end{equation*}
\end{lem}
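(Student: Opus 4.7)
The plan is to argue by contradiction using a first contact time, reducing the non-compact situation to an effectively compact one via uniform properness. Define
\begin{equation*}
t^* := \inf\{t \in [0,T] : \Ima(\eta(\cdot,t)) \cap \Ima(\gamma(\cdot,t)) \neq \emptyset\},
\end{equation*}
with the convention $t^* = +\infty$ if this set is empty. Since $\gamma : \mathbb{R} \times [0,T] \to M$ is proper, the restriction $\gamma(\cdot,0)$ is itself a proper continuous map into $M$, so $\Ima(\gamma(\cdot,0))$ is closed; being disjoint from the compact set $\Ima(\eta(\cdot,0))$, the two initial curves have positive distance. Continuity of $\gamma$ and $\eta$ then forces $t^* > 0$, and it remains to rule out the case $t^* \leq T$.

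The key step is to show that the distance function
\begin{equation*}
f(t) := \inf_{x \in \mathbb{R}, \, s \in S^1} d_g(\gamma(x,t), \eta(s,t))
\end{equation*}
is continuous on $[0, t^*]$ with the infimum attained at each time. For this I would pick a compact tubular neighbourhood $U$ of the compact set $\Ima(\eta|_{S^1 \times [0,t^*]}) \subset M$, using completeness of $g$. Uniform properness of $\gamma$ makes $\gamma^{-1}(U) \subset \mathbb{R} \times [0,t^*]$ compact, so there exists $R > 0$ such that $\gamma(x,t) \notin U$ for all $|x| > R$ and $t \in [0,t^*]$. Hence the infimum defining $f(t)$ is attained over the compact set $[-R,R] \times S^1$, making $f$ continuous. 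By definition $f(t^*) = 0$, and so the infimum is realised at some first contact point $p = \gamma(x^*, t^*) = \eta(s^*, t^*)$ with $|x^*| \leq R$.

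Since $t^* > 0$, both curves are smoothly embedded near $p$, and because $f(t) > 0$ for $t < t^*$ the curves are tangent at $p$, with $\eta$ lying locally on one side of $\gamma$. I would then work in a coordinate chart around $p$ with the common tangent line horizontal and both curves expressed as graphs $u_\gamma(x,t)$, $u_\eta(x,t)$ over a small spatial interval on a small parabolic neighbourhood of $(x^*, t^*)$. Each graph solves a strictly parabolic quasi-linear graphical CSF equation in these coordinates, and by the argument of Theorem \ref{avoidance princ} the difference $v := u_\gamma - u_\eta$ satisfies a strictly parabolic linear equation to which the intersection principle \ref{int princ} applies. Since $v$ does not change sign on $t < t^*$ yet vanishes to first order at $(x^*, t^*)$, the Sturm-type monotonicity of zeros (equivalently the Hopf boundary point lemma) produces the required contradiction.

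The main obstacle will be the compactness reduction of the second paragraph, since one must track how properness interacts with $d_g$-closeness: the tubular neighbourhood $U$ has to be chosen small enough to stay compact but large enough to absorb any $\gamma$-approach to $\eta$. Once that is done, the local graph reduction at the contact point is routine. It is worth emphasising that the uniform properness hypothesis is essential here, as without it the infimum defining $f$ could escape to spatial infinity and the first-contact point could fail to exist, which is precisely the pathology exhibited by the reparameterised arm in remark \ref{remark arm}.
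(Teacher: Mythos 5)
Your proposal is correct and takes essentially the same approach as the paper: the crux in both is that uniform properness of $\gamma$ together with compactness of $\Ima(\eta)$ in space-time reduces the problem to a compact one, after which the standard first-contact argument applies. The paper's own proof stops at that compactification step (finding $K \Subset \mathbb{R}$ with $\gamma^{-1}(B_R(x_0)) \subseteq K \times [0,T]$ and showing the distance is positive for small $t$), deferring the contact-point analysis to ``the usual maximum principle''; you spell that remaining step out explicitly via local graphs and a Sturm/strong-maximum-principle contradiction, but the underlying argument is the same.
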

\begin{proof}
It suffices to show that there is some positive first hitting time $t_0 \in (0,T]$. The result then follows from the usual maximum principle. Since $\eta$ is continuous, there exists some $x_0 \in M$ and $R>0$ such that Im$(\eta) \subseteq B_R(x_0)$. Since $\gamma$ is uniformly proper, there exists $K \Subset \mathbb{R}$ such that $\gamma^{-1}(B_R(x_0)) \subseteq K \times [0,T]$. Consider the distance function $d: S^1 \times K \times [0,T] \rightarrow [0,\infty)$,
\begin{equation*}
    d_t(x,y) := |\eta(x,t) - \gamma(y,t)|, \quad \forall (x,y,t) \in S^1 \times K \times [0,T].
\end{equation*}
Note that the distance between the two curves at any time $t \in [0,T]$ is given by
\begin{equation*}
    D(t) := \inf_{(x,y)\in S^1 \times K} d_t(x,y).
\end{equation*}
Since the solutions are initially disjoint, $D(0)>0$. By the compactness of $S^1 \times K$, $D(t)>0$ for sufficiently small $t$.
\end{proof}

The following lemma gives the explicit construction of the warping function $\phi$ whose existence we claimed in section \ref{chapter2}.
\warp
\begin{proof} 
Define the following bump functions
\begin{equation*}
f_1(x) := \begin{cases}
0 : x \leq 0\\
e^{-\frac{1}{x}} : x>0
\end{cases} \quad
f_2(x) := \frac{f_1(x)}{f_1(x) + f_1(\frac{1}{4}-x)} \quad f_{3}(x):= \frac{f_2(x-1) +  8 f_2\left(x - \frac{7}{4}\right)}{9}.
\end{equation*}
For all $x>0$, we then define 
\begin{equation*}
    \phi(x) := \int_0^x y^2 \cdot f_3(y) \ dy.
\end{equation*}
It is routine to check that $\phi$ is a smooth function with the necessary properties.
\end{proof}

The following lemma is a very simple observation about the how the gradients of two curves intersecting at a single point are ordered. Although basic, this lemma is used repeatedly in sections \ref{chapter2} and \ref{chapter3} in conjunction with foliation arguments.
\begin{lem}\label{fact}
\textnormal{If $u_1,u_2 \in C^1([\alpha,\beta])$ intersect at a single point $x_0 \in [\alpha,\beta]$, and either}
\begin{enumerate}
\item [$(A)$] \textnormal{$x_0 \in [\alpha,\beta)$ with $u_1(\beta) < u_2(\beta)$.}
\item [$(B)$] \textnormal{$x_0 \in (\alpha,\beta]$ with $u_1(\alpha) > u_2(\alpha)$.}
\end{enumerate}
\textnormal{Then $(u_1)'(x_0) \leq (u_2)'(x_0)$.}
\end{lem}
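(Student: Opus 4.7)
The plan is to reduce the statement to a simple one-sided derivative computation for the difference $v := u_2 - u_1 \in C^1([\alpha,\beta])$. By hypothesis, $v$ vanishes only at $x_0$, so on each connected component of $[\alpha,\beta] \setminus \{x_0\}$ the intermediate value theorem forces $v$ to have constant sign. The conclusion $(u_1)'(x_0) \leq (u_2)'(x_0)$ is exactly $v'(x_0) \geq 0$.

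In case $(A)$, $x_0 < \beta$, so $(x_0,\beta]$ is nonempty and $v$ has constant sign there. Since $v(\beta) = u_2(\beta) - u_1(\beta) > 0$, we must have $v > 0$ on $(x_0,\beta]$. Taking the right-hand derivative at $x_0$,
\begin{equation*}
v'(x_0) = \lim_{x \searrow x_0} \frac{v(x) - v(x_0)}{x - x_0} = \lim_{x \searrow x_0} \frac{v(x)}{x - x_0} \geq 0,
\end{equation*}
where the final inequality uses that the numerator is positive and the denominator is positive on the approach. Case $(B)$ is symmetric: $x_0 > \alpha$, $v(\alpha) < 0$ forces $v < 0$ on $[\alpha,x_0)$, and the left-hand derivative
\begin{equation*}
v'(x_0) = \lim_{x \nearrow x_0} \frac{v(x)}{x - x_0} \geq 0,
\end{equation*}
now as the ratio of two negatives.

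There is no real obstacle here; the only thing to be careful about is ensuring that the side on which we take the limit is genuinely available, which is precisely what the strict inclusions $x_0 \in [\alpha,\beta)$ in $(A)$ and $x_0 \in (\alpha,\beta]$ in $(B)$ guarantee. The $C^1$ hypothesis is used only to identify the one-sided limit with $v'(x_0)$.
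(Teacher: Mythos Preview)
Your proof is correct and essentially identical to the paper's: both use the single-intersection hypothesis together with the endpoint sign to force $u_1 \leq u_2$ (equivalently $v \geq 0$) on the available side of $x_0$, and then read off the derivative inequality from the one-sided difference quotient. The only cosmetic difference is that you work with $v = u_2 - u_1$ while the paper compares $u_1$ and $u_2$ directly.
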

\begin{proof}
In case $(A)$, $u_1 \leq u_2$ on $[x_0,\beta]$ and so
\begin{equation*}
(u_1)'(x_0) = \lim_{h \rightarrow 0^+} \left( \frac{u_1(x_0 + h) - u_1(x_0)}{h} \right) \leq \lim_{h \rightarrow 0^+} \left( \frac{u_2(x_0 + h) - u_2(x_0)}{h} \right) = (u_2)'(x_0).
\end{equation*}
For $(B)$ use the left-sided limit instead.
\end{proof}

The following lemma provides an upperbound for the existence time of a closed solution to CSF inside a simply connected negatively curved space.

\begin{lem}\label{GB lem}
Let $(\mathbb{R}^2 , g)$ be a Hadamard surface ($g$ non-positive curvature) and $\eta_i : S^1 \times [0,T_i) \rightarrow \mathbb{R}^2$ be a family of closed disjoint solutions to CSF for $i \in \{1,\ldots,k\}$. Suppose $\eta : S^1 \times [0,T) \rightarrow \mathbb{R}^2$ is a maximal closed solution to CSF such that the region enclosed by $\eta(\cdot,0)$ contains all of the curves $\bigcup_{i=1}^k \eta_i(\cdot,0)$. Then $T \leq \frac{\alpha}{2\pi} + \sum_{i=1}^k T_i$, where $\alpha:= |\eta(\cdot,0)| -  \sum_{i=1}^k |\eta_i(\cdot,0)|$ is the initial area discrepancy.
\end{lem}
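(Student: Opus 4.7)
The plan is to track the area of the annular region $\Omega(t)$ bounded externally by $\eta(\cdot,t)$ and internally by the surviving obstacles (those with $T_i > t$), and to control its rate of change via Gauss--Bonnet. By the avoidance principle for closed solutions, the curves stay disjoint and each $U_i(t)$ (the region enclosed by $\eta_i(\cdot,t)$) remains contained in the region enclosed by $\eta(\cdot,t)$ for as long as both exist; in particular, $\eta$ cannot extinct while any obstacle survives, so $T \geq T_i$ for every $i$. Assuming the obstacles are pairwise non-nested (which is the case in the application; otherwise one reduces to the outermost members), $\Omega(t)$ is a disk with $k(t) := |\{i : T_i > t\}|$ holes. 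The standard area-evolution formula for CSF gives
\[
\frac{d}{dt}|\Omega(t)| \;=\; -\int_\eta \kappa\,ds + \sum_{i : T_i > t} \int_{\eta_i} \kappa_i\,ds,
\]
and orienting each boundary component so that $\Omega$ lies on the left, Gauss--Bonnet together with $K \leq 0$ yields
\[
\int_\eta \kappa\,ds - \sum_{i : T_i > t}\int_{\eta_i}\kappa_i\,ds \;=\; 2\pi(1-k(t)) - \int_{\Omega(t)} K\,dA \;\geq\; 2\pi(1 - k(t)),
\]
so $\frac{d}{dt}|\Omega(t)| \leq 2\pi(k(t)-1)$.

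From here I would argue by induction on $k$. The base case $k=0$ is immediate: $|\Omega_\eta(t)|$ decreases at rate at least $2\pi$ and reaches $0$ at time $T$, giving $T \leq \alpha/(2\pi)$. For the inductive step, order $T_1 \leq \cdots \leq T_k$ and apply the same differential inequality on $[0,T_1]$ to the auxiliary quantity $B(t) := |\Omega_\eta(t)| - \sum_{i=1}^k|U_i(t)|$ (note that the Gauss--Bonnet calculation with all $k$ obstacles still alive produces $\frac{dB}{dt} \leq 2\pi(k-1)$). Using $|U_1(T_1)|=0$ gives
\[
\alpha' \;:=\; |\Omega_\eta(T_1)| - \sum_{i \geq 2}|U_i(T_1)| \;\leq\; \alpha + 2\pi(k-1)T_1.
\]
The inductive hypothesis applied to $\eta$ on $[T_1,T)$ with the $k-1$ surviving obstacles (remaining existence times $T_i - T_1$) then gives $T - T_1 \leq \alpha'/(2\pi) + \sum_{i \geq 2}(T_i - T_1)$, and combining the two estimates collapses to $T \leq \alpha/(2\pi) + \sum_{i=1}^k T_i$.

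The main obstacle is the sign bookkeeping in Gauss--Bonnet on the multiply connected region $\Omega(t)$: one must check that traversing each $\eta_i$ in the direction for which $\Omega$ lies on the left, the geodesic curvature flips exactly one sign relative to the CSF curvature $\kappa_i$, correctly accounting for the simultaneous reversals of the tangent orientation and of the leftward normal. Once that sign is pinned down, the remaining ingredients (area evolution, Gauss--Bonnet with $\chi = 1 - k(t)$, $K \leq 0$, and the induction bookkeeping above) combine in a straightforward manner.
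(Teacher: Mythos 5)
Your proposal is correct and takes essentially the same approach as the paper. The paper applies Gauss--Bonnet separately to the outer disk $\Gamma(t)$ and to each obstacle disk $\Gamma_i(t)$ and uses $K \leq 0$ to compare $\int_{\Gamma}K$ with $\sum_i \int_{\Gamma_i}K$, which is algebraically identical to your single Gauss--Bonnet on the annular region with Euler characteristic $1-k(t)$; the paper then telescopes the inequality $\partial_t A - \sum_{i\le m}\partial_t A_i \le 2\pi(m-1)$ over the intervals $(T_{m+1},T_m)$ rather than running your induction on $k$, but the bookkeeping is equivalent and both land on $T \le \frac{\alpha}{2\pi}+\sum_i T_i$.
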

\begin{proof}
Let $\Gamma(t)$, $\Gamma_i(t)$ denote the regions enclosed by the curves $\eta(\cdot,t)$ and $\eta_i(\cdot,t)$ respectively. Without loss of generality $0 =: T_{k+1} < T_k \leq \cdots \leq T_1$. Then by the avoidance principle for closed curves, for each $m \in \{1,\ldots,k\}$ we have
\begin{equation*}
    \bigcup_{i=1}^m \Gamma_i(t) \subseteq \Gamma(t), \quad \forall t \in (T_{m+1} , T_m).
\end{equation*}
Let $A(t)$, $A_i(t)$ denote the areas of $\Gamma(t)$, $\Gamma_i(t)$ respectively, so that $A(0) = \alpha + \sum\limits_{i=1}^k A_i(0)$. For $t \in (T_{m+1},T_m)$ we apply Gauss-Bonnet to give
\begin{equation*}
    \partial_t A = -2\pi + \int_{\Gamma(t)} K dA \leq
    -2\pi + \sum_{i=1}^m \int_{\Gamma_i(t)} K dA = 2\pi (m-1) + \sum_{i=1}^m  \partial_t A_i.
\end{equation*}
Integrating, we have for each $m \in \{1,\ldots,k\}$
\begin{equation}\label{eqn appendix misc}
    A(T_m) - A(T_{m+1}) \leq 2\pi (m-1)(T_m - T_{m+1}) + \sum\limits_{i=1}^m A_i(T_m) - A_i(T_{m+1}).
\end{equation}
Summing (\ref{eqn appendix misc}) over $m \in \{1,\ldots,k\}$
\begin{equation*}
  A(T_1) \leq A(0) - \sum\limits_{i=1}^{k} A_i(0) +  2\pi \sum\limits_{i=1}^{k-1} T_i = \alpha + 2\pi \sum\limits_{i=1}^{k-1} T_i.
\end{equation*}
Applying Gauss-Bonnet once more
\begin{equation*}
 T \leq T_1 + \frac{A(T_1)}{2\pi} = \frac{\alpha}{2\pi} + \sum\limits_{i=1}^{k} T_i. \qedhere
\end{equation*}
\end{proof}

\end{appendix}

\printbibliography

@article {tychonoff1935theoremes,
  title={Th{\'e}or{\`e}mes d'unicit{\'e} pour l'{\'e}quation de la chaleur},
  author={Tychonoff, A},
  journal={Matematicheskii Sbornik},
  volume={42},
  number={2},
  pages={199--216},
  year={1935},
  publisher={Steklov Mathematical Institute of Russian Academy of Sciences}
}

@article{daskalopoulos2021uniqueness,
  title={Uniqueness of entire graphs evolving by Mean Curvature flow},
  author={Daskalopoulos, Panagiota and Saez, Mariel},
  journal={arXiv preprint arXiv:2110.12026},
  year={2021}
}

@book{ilmanen1994elliptic,
  title={Elliptic regularization and partial regularity for motion by mean curvature},
  author={Ilmanen, Tom},
  volume={520},
  year={1994},
  publisher={American Mathematical Soc.}
}

@unpublished{PhDthesis,
  author={Peachey, Luke},
  note={in prep.},
  title={PhD Thesis},
}

@book{ladyzhenskaia1988linear,
  title={Linear and quasi-linear equations of parabolic type},
  author={Ladyzhenskaia, Olga Aleksandrovna and Solonnikov, Vsevolod Alekseevich and Ural'tseva, Nina N},
  volume={23},
  year={1988},
  publisher={American Mathematical Soc.}
}

@article{gage1986heat,
  title={The heat equation shrinking convex plane curves},
  author={Gage, Michael and Hamilton, Richard S},
  journal={Journal of Differential Geometry},
  volume={23},
  number={1},
  pages={69--96},
  year={1986},
  publisher={Lehigh University}
}

@article{angenent1988zero,
  title={The zero set of a solution of a parabolic equation.},
  author={Angenent, Sigurd},
  year={1988},
  publisher={Walter de Gruyter, Berlin/New York Berlin, New York}
}

@article{lauer2013new,
  title={A new length estimate for curve shortening flow and low regularity initial data},
  author={Lauer, Joseph},
  journal={Geometric and Functional Analysis},
  volume={23},
  number={6},
  pages={1934--1961},
  year={2013},
  publisher={Springer}
}

@article{chou1998shortening,
  title={Shortening complete plane curves},
  author={Chou, Kai-Seng and Zhu, Xi-Ping},
  journal={Journal of Differential Geometry},
  volume={50},
  number={3},
  pages={471--504},
  year={1998},
  publisher={Lehigh University}
}

\end{document}